\newcommand{\comment}[1]{}
\newtheorem{lem}{Lemma}[section]
\newtheorem{propn}[lem]{Proposition}
\newtheorem{cor}[lem]{Corollary}
\newtheorem{thm}[lem]{Theorem}
\theoremstyle{remark}
\theoremstyle{definition}
\newcommand{\R}{\mathbb R}
\newcommand{\Z}{\mathbb Z}
\newcommand{\N}{\mathbb N}
\newcommand{\C}{\mathbb C}
\newcommand{\D}{\delta}
\newcommand{\VE}{\varepsilon}
\newcommand{\A}{\alpha}
\newcommand{\eps}{\ensuremath{\varepsilon}}
\newcommand{\La}{\ensuremath{\Lambda}}
\newcommand{\Te}{\ensuremath{\Theta}}
\newcommand{\al}{\ensuremath{\alpha}}
\newcommand{\be}{\ensuremath{\beta}}
\newcommand{\te}{\ensuremath{\theta}}
\newcommand{\de}{\ensuremath{\delta}}
\newcommand{\MM}{\ensuremath{\mathcal{M}}}
\newcommand{\subs}{\ensuremath{\subseteq}}
\newcommand{\eq}{\begin{equation}}
\newcommand{\ee}{\end{equation}}
\begin{document}
\title{Simultaneous Polynomial Recurrence}
\author{Neil Lyall\quad\quad\quad\'Akos Magyar}
\thanks{Both authors were partially supported by NSF grants.}

\address{Department of Mathematics, The University of Georgia, Athens, GA 30602, USA}
\email{lyall@math.uga.edu}
\address{Department of Mathematics, University of British Columbia, Vancouver, B.C. V6T 1Z2, Canada}
\email{magyar@math.ubc.ca}


\begin{abstract}
Let $A\subseteq\{1,\dots,N\}$ and $P_1,\ldots,P_\ell\in\Z[n]$ with $P_i(0)=0$ and $\deg P_i=k$ for every $1\leq i\leq\ell$.

We show, using Fourier analytic techniques, that for every $\VE>0$, there necessarily exists $n\in\N$ such that
\[\frac{|A\cap (A+P_i(n))|}{N}>\left(\frac{|A|}{N}\right)^2-\VE\] holds simultaneously for $1\leq i\leq \ell$ (in other words all of the polynomial shifts of the set $A$ intersect $A$ ``$\VE$-optimally"), as long as $N\geq N_1(\VE,P_1,\ldots,P_\ell)$.
The quantitative bounds obtained for $N_1$ are explicit but poor; we establish that $N_1$ may be taken to be a constant (depending only on $P_1,\dots,P_\ell$) times a tower of 2's of height $C_{k,\ell}^*+C\eps^{-2}$.

\end{abstract}
\maketitle

\setlength{\parskip}{4pt}


\section{Introduction.}
\subsection{Background}
The study of recurrence properties of dynamical systems goes back to the beginnings of ergodic theory.
 If $A$ is a measurable subset of a probability space $(X,\mathcal{M},\mu)$ with $\mu(A)>0$ and $T$ is a measure preserving transformation, then it was already shown by Poincar\'e \cite{P} that $\mu(A\cap T^{-n}A)>0$ for some  natural number $n$ (and hence for infinitely many).
This result was subsequently sharpened by Khintchine \cite{K}, who observed that for every $\eps>0$ there in fact exist $n\in\N$ such that $\mu(A\cap T^{-n}A)>\mu(A)^2-\eps$.
Note that in general this lower bound is sharp, since $\mu(A\cap T^{-n} A)\to\mu(A)^2$ as $n\to \infty$ whenever $T$ is a mixing transformation.

A polynomial version of Khintchine's result, where the set of natural numbers $n$ is replaced by the values of an integral polynomial $P(n)$ that satisfies $P(0)=0$, was established by Furstenberg \cite{F}, for a proof see also \cite{M} or \cite{B}. Recently, far reaching generalizations of Furstenberg's result have been obtained in the settings of multiple recurrence: let $(X,\MM,\mu,T)$ be an invertible measure preserving system, $A\in\mathcal{M}$ and $P_1,\ldots,P_\ell$ be any linearly independent family of integral polynomials with $P_i(0)=0$ for all $1\leq i\leq \ell$, then Frantzikinakis and Kra \cite{FK} have shown that for any $\VE>0$, there necessarily exists $n\in \N$ such that
\eq\label{FK}
\mu(A\cap T^{-P_1(n)}A \cap\cdots\cap T^{-P_\ell(n)}A)>\mu(A)^{\ell+1}-\eps.\ee

We note that it follows from an earlier counterexample of Ruzsa \cite{BHK} that this result cannot hold in general for dependent polynomials when $\ell\geq2$ nor even in the setting of ergodic systems when $\ell\geq4$.  Bergelson, Host and Kra established in \cite{BHK} that (\ref{FK}) does hold, under this additional assumption that $T$ is ergodic in the case of (dependent) linear polynomials when $\ell=2,3$. Frantzikinakis \cite{NF} has investigated the situation for higher degree polynomials.

These multiple recurrence results contrast sharply with the situation when one drops the requirement that the measure of the intersections in (\ref{FK}) are ``optimally'' large: the Polynomial Szemer\'edi Theorem of Bergelson and Leibman \cite{BL} states that  if
$(X,\MM,\mu,T)$ is an invertible measure preserving system and $P_1,\ldots,P_\ell\in\Z[n]$ with $P_i(0)=0$ for all $1\leq i\leq \ell$, then for any $A\in\mathcal{M}$ with $\mu(A)>0$ there necessarily exists $n\in \N$ such that
\eq\label{BL}
\mu(A\cap T^{-P_1(n)}A \cap\cdots\cap T^{-P_\ell(n)}A)>0.\ee
Note that the case when all the polynomials are linear corresponds to Furstenberg's Multiple Recurrence Theorem \cite{F}, which is, via Furstenberg's correspondence principle, equivalent to Szemer\'edi's Theorem on arithmetic progressions.

For a comprehensive survey of the impact of the Poincar\'e recurrence principle in ergodic theory, especially as pertains to the field of ergodic Ramsey theory/additive combinatorics, see \cite{FM}, \cite{B2} and \cite{Kra}.

\subsection{Statement of Main Results}

In this article we will concern ourselves with the study of simultaneous (single) polynomial recurrence.
The following result gives a full generalization of Furstenberg's result in this direction and can be established, as we shall see below, using current and well-known methods in ergodic Ramsey theory.

\begin{thm}\label{ergodic}
Let $(X,\MM,\mu,T)$ be an invertible measure preserving system, $P_1,\ldots,P_\ell \in\Z[n]$ with $P_i(0)=0$ for all $1\leq i\leq \ell$ and $A\in\mathcal{M}$.
For every $\eps>0$, there exists $n\in\N$  such that
\eq
\mu(A\cap T^{-P_i(n)} A)>\mu(A)^2-\eps\ \ \ \ \text{for all}\ \ \ \ 1\leq i\leq \ell.
\ee
\end{thm}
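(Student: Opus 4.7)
The plan is to prove this via the spectral theorem, reducing the statement to a simultaneous polynomial Diophantine approximation problem on the spectrum of $T$. Setting $f=\1_A$ and letting $\sigma$ denote the spectral measure of $f$ on $\T=[0,1)$, we have the identity $\mu(A\cap T^{-m}A)=\int_{\T} e^{2\pi i m t}\,d\sigma(t)$. I would decompose $\sigma=\sigma(\{0\})\delta_0+\sigma_a+\sigma_c$, where $\sigma_a$ is the atomic part on $(0,1)$ and $\sigma_c$ is the non-atomic part. The mean ergodic theorem together with Jensen's inequality gives $\sigma(\{0\})=\|E(f\,|\,\mathcal{I})\|_2^2\ge\mu(A)^2$, so it suffices to find a single $n$ for which $\Rre\,\hat\sigma_a(P_i(n))+\Rre\,\hat\sigma_c(P_i(n))\ge -\eps$ holds simultaneously for all $1\le i\le\ell$.

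The continuous part is handled by a Wiener-type averaging argument: expanding $|\hat\sigma_c(P_i(n))|^2$ as a double integral against $\sigma_c\otimes\sigma_c$ and invoking Weyl's polynomial equidistribution theorem, which guarantees $\frac{1}{N}\sum_{n=1}^N e^{2\pi i P_i(n)u}\to 0$ for every irrational $u$, one obtains $\frac{1}{N}\sum_{n=1}^N|\hat\sigma_c(P_i(n))|^2\to 0$ (using that the non-atomic measure $\sigma_c$ gives $\Q$ measure zero, so Weyl applies $\sigma_c\otimes\sigma_c$-almost everywhere). Hence, for each $i$, the set of $n$ with $|\hat\sigma_c(P_i(n))|>\eps/2$ has zero upper Banach density; these bad $n$ can be safely discarded.

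For the atomic part, write $\sigma_a=\sum_k a_k\delta_{\lambda_k}$ with $\lambda_k\in(0,1)$. Truncating the tail so that $\sum_{k>K}a_k<\eps/4$, it is enough to find $n$ lying outside the bad set above and satisfying $\|P_i(n)\lambda_k\|_{\R/\Z}<\delta$ for all $1\le i\le\ell$ and $1\le k\le K$, where $\delta$ depends on $\eps$ and on $a_1,\ldots,a_K$. The map $n\mapsto(P_i(n)\lambda_k)_{i,k}\in\T^{\ell K}$ is a polynomial sequence in $n$, so by the vector-valued form of Weyl's theorem it is well-distributed on some closed subgroup $H\le\T^{\ell K}$; the hypothesis $P_i(0)=0$ forces $0\in H$, and hence the orbit enters every neighborhood of the origin with positive lower density. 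Intersecting this positive-density set with the complement of the density-zero bad set produces the required $n$.

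The main technical point will be the simultaneous polynomial equidistribution in the final step — producing one $n$ that is small in all $\ell K$ coordinates at once — which is really the only place where the multiplicity of polynomials enters nontrivially; this reduces to Weyl's classical theorem applied coordinate-wise to the polynomial map $n\mapsto(P_1(n)\lambda_1,\ldots,P_\ell(n)\lambda_K)$. The remaining ingredients (the spectral decomposition, the Wiener-type averaging of the continuous part, and the lower bound $\sigma(\{0\})\ge\mu(A)^2$) are standard pieces of the ergodic toolbox; the overall argument is a direct extension of the proof of the single-polynomial Khintchine-Furstenberg recurrence, upgraded only in the Diophantine approximation step to handle several polynomials jointly.
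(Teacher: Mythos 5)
Your overall architecture (spectral measure of $\1_A$, split into the atom at $0$, the remaining atoms, and the continuous part; lower-bound the mass at $0$ by $\mu(A)^2$ via the mean ergodic theorem; kill the continuous part on average by a Wiener-type argument plus Weyl; handle the atoms by simultaneous Diophantine approximation) is a viable alternative to the paper's route, which instead uses the Koopman--von Neumann splitting $f=f_1+f_2$, van der Waerden's theorem for the almost periodic part, and the Hilbert-space van der Corput lemma for the weakly mixing part. The continuous-part and atom-at-zero steps of your argument are fine: the set $\{(t,s):t-s\in\Q\}$ is indeed $\sigma_c\otimes\sigma_c$-null since $\sigma_c$ is non-atomic, and the Ces\`aro convergence you derive gives a density-zero bad set, which is all you actually use (your claim of zero upper Banach density is an overstatement, but harmless).

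The gap is in the step you yourself flag as the main technical point. A polynomial orbit $n\mapsto (P_i(n)\lambda_k)_{i,k}$ in $\T^{\ell K}$ is \emph{not} in general well-distributed on a closed subgroup: already $n\mapsto n^2/5$ has orbit closure $\{0,1/5,4/5\}$ in $\T$, which is not a subgroup. The correct structural statement is that such an orbit equidistributes on a finite union of cosets of a subtorus, and one must then argue separately that the coset through the identity is visited by a positive-density set of $n\geq 1$ (the value at $n=0$ is not available) and that within that coset the points near $0$ are hit with positive density; moreover ``Weyl's classical theorem applied coordinate-wise'' gives no simultaneity across the $\ell K$ coordinates. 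The fact you need --- that $\left\{n\,:\,\|P_i(n)\lambda_k\|<\delta \text{ for all } i,k\right\}$ has positive lower density --- is true, but it is precisely the content of the paper's Lemma \ref{MainErgodic} (proved via van der Waerden's theorem together with the identity $\sum_{t=0}^j\binom{j}{t}(-1)^{j-t}(x+td)^j=j!\,d^j$) and of Proposition \ref{newlemma} in Appendix B. As written, your argument assumes this key lemma rather than proving it, so the Diophantine step needs to be filled in along one of those lines.
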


Note that there are no assumptions that the polynomials in Theorem \ref{ergodic} are linearly independent.

In the special case when $k=1$, that is when all of the polynomials are linear, this result can be established using only combinatorial methods and the following quantitative result can be obtained.

\begin{thm}[Griesmer \cite{John}]\label{John}
Let $(X,\MM,\mu,T)$ be an invertible measure preserving system, $c_1,\dots,c_\ell\in\Z\setminus\{0\}$ with $\ell\leq2^m$ for some $m\in\N$  and $A\in\mathcal{M}$.
For every $\VE>0$ and $B\subseteq\N$ with
$\log^{m}|B|\geq C\VE^{-1}$, there exists a non-zero $n\in B-B$ such that
\eq
\mu(A\cap T^{-c_in} A)>\mu(A)^2-\eps\ \ \ \ \text{for all}\ \ \ \ 1\leq i\leq \ell.
\ee
\end{thm}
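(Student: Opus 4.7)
My plan is to combine a spectral reduction—which expresses each single-shift recurrence $\mu(A\cap T^{-c_in}A)>\mu(A)^2-\VE$ as a Bohr-set-type condition on $n$—with a recursive pigeonhole argument structured as an induction on $m$.

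First (spectral step), let $\sigma=\sigma_{1_A}$ be the spectral measure of $1_A$ with respect to $T$, so that $\mu(A\cap T^{-n}A)=\widehat\sigma(n)$, $\sigma(\T)=\mu(A)$, and $\sigma(\{0\})\geq\mu(A)^2$ (the last inequality follows because the orthogonal projection of $1_A$ onto the subspace of $T$-invariant functions has $L^2$-norm at least $\mu(A)$). Subtracting the atom at $0$ and separating the heavy atoms of $\sigma|_{\T\setminus\{0\}}$ from a small-mass tail produces a finite set $\Lambda\subset\T\setminus\{0\}$ of size $|\Lambda|\lesssim 1/\VE$ and a radius $\delta\asymp\VE$ such that
\[
\|n\theta\|<\delta\ \text{ for all } \theta\in\Lambda \quad\Longrightarrow\quad \mu(A\cap T^{-n}A)>\mu(A)^2-\VE.
\]
It therefore suffices to exhibit a nonzero $n\in B-B$ satisfying $\|c_in\theta\|<\delta$ simultaneously for every $1\leq i\leq\ell$ and every $\theta\in\Lambda$.

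For the combinatorial step I would prove, by induction on $m$, the following lemma: \emph{for any finite $\Phi\subset\T$, any integers $c_1,\ldots,c_\ell\in\Z\setminus\{0\}$ with $\ell\leq 2^m$, any $\delta>0$, and any finite $B\subseteq\N$ for which $\log^m|B|$ exceeds a suitable quantity of order $|\Phi|\log(2/\delta)$, there exists a nonzero $n\in B-B$ with $\|c_in\varphi\|<\delta$ for every $i\leq\ell$ and $\varphi\in\Phi$.} The base case $m=0$ ($\ell=1$) is a direct pigeonhole on the torus $\T^\Phi$: tile with boxes of sidelength $\delta$, so that $|B|\geq(2/\delta)^{|\Phi|}$ forces two elements of $B$ into the same box. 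For the inductive step $m-1\to m$, split $\{c_1,\dots,c_\ell\}$ into two halves of size at most $2^{m-1}$; apply the same box-pigeonhole but only against the first half, extracting a subset $B'\subseteq B$ with $|B'|\geq|B|(\delta/2)^{(\ell/2)|\Phi|}$ all of whose pairwise differences automatically satisfy the Bohr condition for the first half of the $c_i$'s. Then apply the inductive hypothesis to $B'$ and the second half of the $c_i$'s to produce the desired $n$.

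The main obstacle is the bookkeeping in the inductive step: verifying that the exponentially large pigeonhole loss in $|B'|/|B|$ is absorbed by exactly one logarithm of the tower. Because the hypothesis $\log^m|B|\geq C/\VE$ forces $\log|B|$ to be at least an $(m-1)$-fold iterated exponential in $C/\VE$, the subtracted quantity $(\ell/2)|\Phi|\log(2/\delta)$—polynomial in $2^m$ and $1/\VE$—is negligible compared to $\log|B|$, and so $\log^{m-1}|B'|\geq C/\VE$ continues to hold (after possibly enlarging $C$) at every one of the $m$ recursive steps. The tower height $m=\lceil\log_2\ell\rceil$ in the statement precisely matches the $m$ halvings of the shift-set needed to reduce to the base case; applying the lemma with $\Phi=\Lambda$ and $\delta\asymp\VE$ completes the proof.
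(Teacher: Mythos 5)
The combinatorial skeleton of your argument (halving the set of coefficients and recursing, with one pigeonhole per level) has the right shape, but the spectral reduction it rests on is false, and this is a genuine gap rather than a bookkeeping issue. Writing $\sigma$ for the spectral measure of $1_A$, you have $\mu(A\cap T^{-n}A)=\widehat{\sigma}(n)=\sigma(\{0\})+\int_{\T\setminus\{0\}}e(n\theta)\,d\sigma(\theta)$, and while $\sigma(\{0\})\geq\mu(A)^2$ is correct, the measure $\sigma|_{\T\setminus\{0\}}$ does \emph{not} have small mass after removing finitely many heavy atoms: its continuous (weak-mixing) component, together with the possibly infinitely many light atoms, can carry mass comparable to $\mu(A)-\mu(A)^2$. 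Consequently the implication ``$\|n\theta\|<\delta$ for all $\theta\in\Lambda$ implies $\mu(A\cap T^{-n}A)>\mu(A)^2-\VE$'' fails pointwise; the contribution of the non-atomic part is only small \emph{on average} (in density, via Wiener's lemma or van der Corput), not for every $n$ in the Bohr set. In the extreme case of a weakly mixing system your set $\Lambda$ is empty and the Bohr condition is vacuous, yet the bad set $\{n:\mu(A\cap T^{-n}A)\leq\mu(A)^2-\VE\}$ is merely of density zero, and a density-zero set can perfectly well contain $(B-B)\setminus\{0\}$ for arbitrarily large finite $B$ (take $B$ an arithmetic progression with rapidly growing common difference). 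So your base case --- pigeonholing two elements of $B$ into the same box of $\T^{\Lambda}$ --- produces a difference $n$ for which the recurrence inequality need not hold, and the same defect propagates through the inductive step, where you assume that all pairwise differences within $B'$ are automatically good for half of the coefficients.

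The paper avoids this entirely. Its base case (Lemma \ref{khinchin}) is a Cauchy--Schwarz averaging over \emph{all} pairs of $B$, in spectral language the positivity
$\frac{1}{|B|^2}\sum_{b,b'\in B}\widehat{\sigma}(b-b')=\int\bigl|\frac{1}{|B|}\sum_{b\in B}e(b\theta)\bigr|^2\,d\sigma(\theta)\geq\sigma(\{0\})\geq\mu(A)^2$,
which requires no structural description of the good $n$; and the inductive step replaces your Bohr-box pigeonhole by Ramsey's theorem applied to the two-coloring of pairs according to whether the recurrence holds for the first half of the $c_i$, using the inductive hypothesis only to exclude a large blue clique. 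This is exactly why the hypothesis involves the $m$-fold iterated logarithm $\log^{m}|B|\geq C\VE^{-1}$: each Ramsey step costs an exponential, whereas your (unproved) Bohr-set reduction would yield a single-logarithm bound, which should itself be a warning sign. To salvage your outline you must replace the spectral/Bohr characterization of good $n$ by the weaker, but true, statement that every sufficiently large finite set contains a good difference --- and at that point you are forced into the Ramsey argument.
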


It follows from a variant of Furstenberg's correspondence principle, see Frantzikinakis and Kra \cite{FK} (in particular the proof of Theorem 2.2), that Theorem \ref{ergodic} has the following combinatorial consequence.

\begin{cor}\label{cor}
Let $P_1,\ldots,P_\ell \in\Z[n]$ with $P_i(0)=0$ for all $1\leq i\leq \ell$.
For every $\VE>0$ there exists $N_0=N_0(\eps,P_1,\dots,P_\ell)$ such that if $N\geq N_0$ and $A\subs [1,N]$, then there exists $n\in\N$ such that
\eq
\frac{|A \cap(A+P_i(n))|}{N} >\left(\frac{|A|}{N}\right)^2-\VE\ \ \ \ \text{for all}\ \ \ 1\leq i\leq \ell.\ee
\end{cor}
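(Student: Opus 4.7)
The plan is to derive the corollary from Theorem~\ref{ergodic} by a standard contradiction-and-compactness argument, using the version of Furstenberg's correspondence principle alluded to in the excerpt (specifically, the variant in Frantzikinakis--Kra \cite{FK} that delivers \emph{equalities} rather than just one-sided inequalities between density-type quantities on $\N$ and measures of cylinder-type sets in a limit system).

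Assume for contradiction that the conclusion fails. Then there exist $\VE_0>0$, polynomials $P_1,\dots,P_\ell$ with $P_i(0)=0$, a sequence $N_j\to\infty$, and sets $A_j\subs[1,N_j]$ such that for every $n\in\N$ there is some index $i=i(n,j)\in\{1,\dots,\ell\}$ for which
\[
\frac{|A_j\cap(A_j+P_i(n))|}{N_j}\ \le\ \left(\frac{|A_j|}{N_j}\right)^{\!2}-\VE_0.
\]
First I would pass to a subsequence along which $|A_j|/N_j\to\alpha\in[0,1]$, along which $i(n,j)$ stabilizes in $j$ to some $i(n)$ for each fixed $n$, and along which $N_j^{-1}|A_j\cap(A_j+P_i(n))|$ converges for every pair $(n,i)$ (possible by a diagonal extraction since the countable family of pairs takes values in $[0,1]$).

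Next I would apply Furstenberg's correspondence principle in the form needed here: view the indicator functions $\mathbf 1_{A_j}$ as points in $\{0,1\}^{\Z}$, equip this compact space with the shift $T$, and take a weak-$*$ limit of the normalized orbit averages of Dirac masses along the $A_j$. This yields an invertible measure preserving system $(X,\MM,\mu,T)$ and a clopen set $A\in\MM$ with $\mu(A)=\alpha$ such that for every $n\in\N$ and every $1\leq i\leq \ell$,
\[
\mu(A\cap T^{-P_i(n)}A)\ =\ \lim_{j\to\infty}\frac{|A_j\cap(A_j+P_i(n))|}{N_j}.
\]
The key point, and where care is required, is that the correspondence here must produce an \emph{equality} above, since Khintchine-type lower bounds flip direction under one-sided Furstenberg correspondence. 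This is exactly the variant established in the proof of Theorem~2.2 of \cite{FK}, which replaces the standard $\limsup$ with an honest limit along a suitably chosen subsequence.

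Combining the two observations, we obtain a measure-preserving system, a set $A$ with $\mu(A)=\alpha$, and for each $n\in\N$ an index $i(n)$ such that
\[
\mu\bigl(A\cap T^{-P_{i(n)}(n)}A\bigr)\ \leq\ \alpha^2-\VE_0\ =\ \mu(A)^2-\VE_0.
\]
This directly contradicts Theorem~\ref{ergodic} applied with $\eps=\VE_0$, since that theorem produces some $n$ for which the reverse inequality holds simultaneously for every $1\leq i\leq\ell$. The only real obstacle in this plan is carrying out the correspondence step so that it yields equalities and handles all the polynomial shifts simultaneously; once that is granted, the deduction is immediate.
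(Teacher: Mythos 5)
Your argument is correct and is precisely the route the paper takes: the paper simply cites the variant of Furstenberg's correspondence principle from the proof of Theorem~2.2 in \cite{FK} to deduce the corollary from Theorem~\ref{ergodic}, and your contradiction-and-compactness argument (weak-$*$ limit on $\{0,1\}^{\Z}$ yielding equalities for the intersection densities, then diagonal extraction over $n$) is exactly what that citation hides. No gaps.
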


We note that this correspondence give no quantitative bounds in the finite setting of Corollary \ref{cor} (other than the special case when all of the polynomials are linear).
However,
if we relax the requirement that the intersections are ``optimally'' large and ask merely that they are non-empty then one has the following result.

\begin{thm}[Lyall and Magyar \cite{LM1'}]\label{LM1}
Let $0<\D<1$ and $P_1,\ldots,P_\ell \in\Z[n]$ with $P_i(0)=0$ and $\deg P_i\leq k$ \emph{for all} $1\leq i\leq \ell$. There exists a constant $C=C(P_1,\dots,P_\ell)$ such that if $N\geq \exp(C\D^{-\ell(k-1)}\log\D^{-1})$ and $A\subseteq[1,N]$ with $|A|\geq \D N$, then there exists $n\in\N$ for which
\eq
A\cap(A+P_i(n))\neq\emptyset\ \ \ \ \text{for all}\ \ \ 1\leq i\leq \ell.
\ee
\end{thm}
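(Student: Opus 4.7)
The plan is to prove the theorem by a Fourier-analytic density-increment argument in the spirit of S\'ark\"ozy, adapted to handle the $\ell$ polynomial shifts simultaneously. Let $M$ be of order $N^{1/k}$ (with implicit constant depending on $P_1,\ldots,P_\ell$), chosen so that $|P_i(n)|\leq N$ for every $n\in[1,M]$ and every $i$. By a trivial union bound it suffices to establish, for each polynomial $P=P_i$, that the set of ``bad'' shifts $B_P := \{n\in[1,M] : A\cap(A+P(n))=\emptyset\}$ satisfies $|B_P|<M/\ell$.

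For a fixed $P$ of degree at most $k$, write $f = 1_A - \D\cdot 1_{[1,N]}$ and study the counting function
\[
\frac{1}{M}\sum_{n=1}^M |A\cap(A+P(n))|,
\]
whose Fourier expansion on $\Z/\tilde N\Z$ (with $\tilde N$ a prime of order $N$) has main term $\D^2 N$ and an error governed by $\widehat{1_A}$ together with the polynomial exponential sums $M^{-1}\sum_{n\leq M} e(\xi P(n))$. A Hardy--Littlewood major/minor arc decomposition combined with Weyl's inequality (yielding a saving of order $\D^{k-1}$ on the minor arcs) produces the usual dichotomy: either $|B_P|\leq M/(2\ell)$, or else there exists a frequency $\xi$ close to a rational $a/q$ with $q\leq Q:=\D^{-O_k(1)}$ at which $|\widehat{1_A}(\xi)|\gtrsim \D^2 N$. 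Passing from such a Fourier bias to a density increment on a residue class of modulus $q$ and intersecting with $[1,N]$ yields a subprogression on which $A$ has relative density exceeding $\D(1+c)$.

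If the favorable alternative holds for every $P_i$ simultaneously, the union bound supplies the required $n\in[1,M]\setminus\bigcup_i B_{P_i}$. Otherwise we pass to the subprogression produced by one of the polynomials and iterate after rescaling. The iteration terminates in $O(\log \D^{-1})$ rounds since the density cannot exceed $1$. The main technical obstacle is ensuring that, when the density increment is triggered by a single $P_i$, the resulting subprogression is compatible with the polynomial structure of \emph{all} $\ell$ polynomials, so that the argument may be reapplied to the restricted set. Achieving this requires the common difference of the subprogression to absorb the congruences forced by each of the $\ell$ polynomials of degree $k$, which contributes the factor $\ell$ in the exponent and leads to a per-iteration length loss of $\exp(C\D^{-\ell(k-1)})$; multiplying by the $O(\log \D^{-1})$ iterations yields the claimed bound $N \geq \exp(C \D^{-\ell(k-1)} \log \D^{-1})$.
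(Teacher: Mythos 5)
Your proposal goes a fundamentally different way from the paper, and it leaves the hard part unproved. The paper does not run a density increment with the $\ell$ polynomials at all: it \emph{lifts} the problem to $k$ dimensions. Writing $\mathcal{P}$ for the $\ell\times k$ coefficient matrix, one finds a shift $m\in\Z^\ell$ for which the set $B=\{b\in[-N',N']^k:\mathcal{P}(b)\in A^\ell-m\}$ has density $\gtrsim\delta^\ell$ in a box, and observes that $(n,n^2,\dots,n^k)\in B-B$ forces $P_i(n)\in A-A$ simultaneously for all $i$. The conclusion then follows by quoting the known higher-dimensional result (Theorem \ref{2}) for the single curve $(n,n^2,\dots,n^k)$. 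In particular the exponent $\ell(k-1)$ arises purely from the density drop $\delta\mapsto\delta^\ell$ in the lifting, not from any ``compatibility of moduli'' consideration. The one technical obstacle you correctly identify --- making a density increment triggered by one $P_i$ reusable for all $\ell$ polynomials on the new subprogression --- is precisely what your sketch does not resolve: you assert that it ``contributes the factor $\ell$ in the exponent'' but give no mechanism. Even for $\ell=1$ this step is delicate (it is Lucier's auxiliary-polynomial construction $P\mapsto P(qn)/\lambda$, with careful control of coefficient growth so that Weyl's inequality remains usable at later stages); for $\ell\geq 2$ one would need a common modulus serving all $\ell$ families of auxiliary polynomials, and nothing in your outline explains how that yields the stated bound rather than a much worse one.

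The quantitative bookkeeping is also not consistent with how such iterations behave. The Fourier bias you extract is $|\widehat{1_A}(\xi)|\gtrsim\delta^2$, which yields an \emph{additive} density gain of order a power of $\delta$ (in the arguments behind Theorem \ref{2} the gain per step is of order $\delta^{k}$, whence $\sim\delta^{-(k-1)}$ iterations), not a multiplicative gain $(1+c)$ with $c$ absolute; so the iteration does not terminate in $O(\log\delta^{-1})$ rounds. Correspondingly, the length loss per iteration in these arguments is polynomial in $\delta$ (one passes to a progression of length roughly $\delta^{C}N$ or $N^{c}$), not $\exp(C\delta^{-\ell(k-1)})$ as you claim. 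Your two errors happen to multiply out to the correct final shape $\exp(C\delta^{-\ell(k-1)}\log\delta^{-1})$, but neither factor is right, so the derivation cannot be accepted as written. If you want a self-contained proof along density-increment lines you would essentially have to reprove Theorem \ref{2}; the efficient route is the paper's: reduce the $\ell$ simultaneous conditions to one condition about the curve $(n,n^2,\dots,n^k)$ at the cost of raising the density to the power $\ell$.
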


While if we continue to insist on ``optimally'' large intersections, but restrict ourselves to the case $\ell=1$, namely the case of a single polynomial, then we have the following result.

\begin{thm}[Lyall and Magyar \cite{LM2}]\label{LM2}
Let $A\subseteq[1,N]$, $P(n)\in\Z[n]$ with $P(0)=0$ and $\VE>0$. There exists a constant $C=C(P)$ such that if $N\geq\exp\exp(C\VE^{-1}\log\VE^{-1})$, then there exists $n\in\N$ for which
\eq
\dfrac{|A\cap(A+P(n))|}{N}>\left(\dfrac{|A|}{N}\right)^2-\VE.
\ee
\end{thm}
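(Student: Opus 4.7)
The plan is a Fourier-analytic density-increment argument. Let $\delta = |A|/N$, choose $M = \Theta((\VE N)^{1/k})$ so that $|P(n)| \leq \VE N/2$ for all $n \leq M$, and embed $A$ in the cyclic group $\Z_{N'}$ with $N' = N + \VE N/2$, so that no shift by $P(n)$ with $n \leq M$ wraps around. By Plancherel,
\begin{equation*}
T(n) := \sum_x \1_A(x)\,\1_A(x + P(n)) \;=\; \frac{1}{N'}\sum_{\xi \in \Z_{N'}} |\widehat{\1_A}(\xi)|^2\,e(\xi P(n)/N'),
\end{equation*}
with the $\xi = 0$ term producing the desired main term $|A|^2/N' = \delta^2 N\,(1+O(\VE))$. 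The task reduces to showing that for some admissible $n$ the remaining Fourier mass contributes at worst $-O(\VE N)$, or else extracting structure.

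I would fix parameters $Q$ (denominator bound) and $\rho$ (arc radius) and split the nonzero frequencies into major arcs $\mathfrak{M}$ (those within $\rho$ of some $a/q$ in lowest terms with $q \leq Q$) and minor arcs $\mathfrak{m}$. Weyl's inequality yields the cancellation $|M^{-1}\sum_{n\leq M} e(\xi P(n)/N')| \leq M^{-c_k}$ on $\mathfrak{m}$; averaging over $n \in [1,M]$ and invoking Parseval, the minor-arc contribution is then $O(M^{-c_k}\delta N) = O(\VE N)$ provided $M$ is chosen sufficiently large. For the major arcs, I restrict $n$ to positive multiples of $q^\star := \mathrm{lcm}\{q \leq Q\}$ in $[1,M]$: the hypothesis $P(0) = 0$ forces $q \mid P(n)$ whenever $q \mid n$, so the principal phase $e(aP(n)/q) = 1$, and the residual phase $e(\xi' P(n)/N')$ with $|\xi'| \leq \rho$ lies within $O(\rho M^k/N') = O(\VE)$ of $1$ for an appropriate choice of $\rho$. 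Consequently, the major-arc contribution at such $n$ equals the \emph{positive} quantity $\frac{1}{N'}\sum_{\xi \in \mathfrak{M}\setminus\{0\}}|\widehat{\1_A}(\xi)|^2$ up to an $O(\VE N)$ error.

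Two cases now arise. If $\sum_{\xi \in \mathfrak{M}\setminus\{0\}}|\widehat{\1_A}(\xi)|^2 \leq \VE N^2$, the three estimates combine to give $T(n) \geq (\delta^2 - C\VE)N$ at some such $n$, and after adjusting constants we are done. Otherwise, a standard $L^2$-concentration argument (pigeonholing over the $O(Q^2)$ reduced pairs $(a,q)$ and converting the resulting Fourier-side correlation to a physical-space density statement via a Dirichlet-type kernel) produces an arithmetic progression $P' \subseteq [1,N]$ of length $\geq N^{c}$ on which $|A \cap P'|/|P'| \geq \delta + \Omega(\VE)$.

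Iterating this density increment at most $O(\VE^{-1})$ times either produces the desired $n$ or contradicts $\delta \leq 1$. The per-step shrinkage $N \mapsto N^{c}$ (with $c$ dependent on the chosen parameters) compounded over the iteration is what yields the stated tower bound $N \geq \exp\exp(C\VE^{-1}\log\VE^{-1})$. The principal technical obstacle will be the simultaneous parameter balancing of $Q$, $\rho$, $M$, $q^\star$ and the density-increment exponent so that (i) the Weyl bound on minor arcs beats $\VE$, (ii) the major-arc approximation loss stays $\leq \VE$, (iii) the divisibility constraint $q^\star \mid n$ still leaves sufficiently many $n \in [1,M]$, and (iv) the density increment's strength together with the induced subprogression length jointly close the iteration within the asserted double-exponential bound.
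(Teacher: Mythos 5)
You should first be aware that the paper does not prove this statement: Theorem \ref{LM2} is quoted from the separate preprint \cite{LM2}. The closest argument actually carried out here is the proof of Theorem \ref{MainThm} with $\ell=1$, which gives the same conclusion but only with a tower-type bound, and it proceeds quite differently from your proposal: there is no major/minor arc dissection and no density increment. Instead $\1_A$ is split (Proposition \ref{U2}) as $f_1+f_2+f_3$, where $f_1$ collects the $m$ largest Fourier modes, $\|\widehat{f_2}\|_\infty\le\eta(m)$ for an adaptively chosen $\eta$, and $\|f_3\|_2\le\VE$; the uniform piece is handled by a $2K$-th moment bound for \emph{signed} Weyl sums via Hua's estimate for Tarry's problem (Lemma \ref{FourierError}), and the structured piece by a simultaneous recurrence estimate valid for arbitrary real frequencies (Lemma \ref{FourierMain}), not by forcing $q^\star\mid n$.

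Judged on its own terms, your outline has the right global shape for a double-exponential bound (an $O(\VE^{-1})$-step increment), but it contains a genuine gap which you flag as ``parameter balancing'' without resolving: your constraints (i) and (ii) are mutually inconsistent. For $n\le M$ with $M^k\asymp\VE N$, the phase $e(\xi' P(n)/N')$ is within $O(\VE)$ of $1$ only when the integer offset satisfies $|\xi'|=O(1)$, since $|P(n)|$ is comparable to $\VE N'$ for the bulk of the admissible $n$. On the other hand, Weyl's inequality yields $|M^{-1}\sum_{n\le M}e(\xi P(n)/N')|\le\VE$ only for $\xi/N'$ lying outside neighbourhoods of rationals $a/q$ with $q\le\VE^{-C_k}$ of width $\VE^{-C_k}M^{-k}$, i.e.\ of integer width $\asymp\VE^{-C_k-1}$. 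Thus around each $aN'/q$ there is an annulus of $\asymp\VE^{-C_k-1}$ frequencies on which neither your minor-arc cancellation nor your major-arc positivity applies, and shrinking the range of $n$ rescales both widths by the same factor, so no choice of $Q$, $\rho$, $M$ closes this gap. A third mechanism is needed for the intermediate frequencies --- in the present paper it is the observation that at most $\eta^{-2}$ frequencies carry $|\widehat{\1_A}(\xi)|\ge\eta N$, which are then treated individually by simultaneous Diophantine approximation, the remainder being absorbed into an $\ell^{2K}$ moment bound. A secondary, fixable, issue: you prove the minor-arc estimate by averaging over all $n\in[1,M]$ but then evaluate at $n\in q^\star\Z$, a set of density $1/q^\star\approx e^{-Q}$, which the Chebyshev exceptional set (of density merely $O(\VE^{c})$) can entirely contain; you must average over the multiples of $q^\star$ themselves and apply Weyl's inequality to $m\mapsto P(q^\star m)$.
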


The main objective of the present paper is to present the proof of a (partial) common generalization of Theorems \ref{LM1} and \ref{LM2}. To be more precise, our objective is to establish, using Fourier analytic methods, Corollary \ref{cor} with explicit quantitative bounds, in the special case when all of the polynomials are of the \emph{same} degree. In particular we are able to establish the following.

\begin{thm}\label{MainThm}
Let $P_1,\ldots,P_\ell \in\Z[n]$ with $P_i(0)=0$ and $\deg P_i=k$ for all $1\leq i\leq \ell$. For every $\VE>0$ there exists $N_1= N_1(\eps,P_1,\dots,P_\ell)$ such that if $N\geq N_1$ and $A\subs [1,N]$, then there exist $n\in\N$ such that
\eq\label{MainEq}
\frac{|A \cap(A+P_i(n))|}{N} >\left(\frac{|A|}{N}\right)^2-\VE\ \ \ \ \text{for all}\ \ \ 1\leq i\leq \ell.\ee
In particular, the number $N_1(\eps,P_1,\dots,P_\ell)$ may be taken to be a constant (depending only on $P_1,\dots,P_\ell$) times a tower of 2's of height $C_{k,\ell}^*+C\eps^{-2}$.
\end{thm}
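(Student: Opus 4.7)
The plan is to execute a Fourier-analytic density increment argument, reducing the simultaneous problem to a single-polynomial one via pigeonhole at each stage. Let $\delta = |A|/N$ and assume, toward contradiction, that no $n$ in a carefully chosen window $[1,M]$, with $M \approx (\varepsilon N/C_P)^{1/k}$ for a constant $C_P$ depending on the leading coefficients, satisfies \eqref{MainEq}. Then the sets $B_i = \{n \in [1,M] : |A \cap (A+P_i(n))|/N \leq \delta^2 - \varepsilon\}$ cover $[1,M]$, so by pigeonhole some $B_{i_0}$ has $|B_{i_0}| \geq M/\ell$. Writing $g = \1_A - \delta\,\1_{[1,N]}$ for the balanced function, the standard expansion
\[
\frac{|A \cap (A+P_{i_0}(n))|}{N} = \delta^2 + O\!\left(\tfrac{|P_{i_0}(n)|}{N}\right) + \frac{1}{N}\sum_x g(x)\,g(x - P_{i_0}(n))
\]
holds, and the choice of $M$ ensures the error term is at most $\varepsilon/10$ uniformly in $n \in [1,M]$. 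Averaging the defining inequality of $B_{i_0}$ over $n \in B_{i_0}$ and applying Plancherel yields
\[
\frac{1}{N}\int_0^1 |\hg(\alpha)|^2\,\mu(\alpha)\,d\alpha \leq -\tfrac{9}{10}\varepsilon,
\]
where $\mu(\alpha) = |B_{i_0}|^{-1}\sum_{n \in B_{i_0}} e(\alpha P_{i_0}(n))$.

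Next, decompose $[0,1]$ into major arcs $\mathfrak{M}$ near rationals $a/q$ with $q \leq Q$ and their complement, the minor arcs $\mathfrak{m}$. Weyl's inequality for polynomial exponential sums, applied uniformly in $\alpha \in \mathfrak{m}$, yields $|\mu(\alpha)| \lesssim \ell\, M^{-c_k}$ (the $\ell$ coming from $|B_{i_0}|^{-1} \leq \ell/M$); since $\|\hg\|_2^2 \leq \delta N$, the minor-arc contribution to the integral is $\lesssim \ell\, M^{-c_k}\delta N$, negligible for $M$ taken suitably large. The major arcs must therefore supply the full negative correlation, and a further pigeonhole over the rationals produces some $a_0/q_0$ with $q_0 \leq Q$ satisfying $|\hg(a_0/q_0)|^2 \gtrsim \varepsilon^2 N^2/Q$. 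Standard arguments then translate this large Fourier coefficient into a density increment for $A$ on an arithmetic progression of common difference $q_0$ and length $\gtrsim N/Q$, boosting the density from $\delta$ to at least $\delta + c\varepsilon^2$.

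Iteration proceeds by re-embedding the progression as an interval $[1,N']$ and repeating the argument. Since the density cannot exceed $1$, at most $O(\varepsilon^{-2})$ iterations can occur; each iteration shrinks $N$ to roughly $N/Q$ with $Q$ growing at least exponentially in the Weyl and major-arc parameters, producing the tower-type bound of height $C_{k,\ell}^* + C\varepsilon^{-2}$. The main obstacle is \emph{uniform} control of the Weyl bounds and major-arc structure across the entire family $\{P_1,\ldots,P_\ell\}$: the equal-degree hypothesis is essential, since it guarantees a single Weyl exponent $c_k$ and a common major-arc cutoff $Q$, so that one choice of $M$ and $Q$ simultaneously services all $P_i$. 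The $\ell$-dependence enters only through the $1/\ell$ pigeonhole loss, contributing to $C_{k,\ell}^*$ but not to the iteration depth. Ensuring that the density-increment progression is compatible with the subsequent iteration, and that the Weyl constants can be taken uniform over the polynomial family, is the subtle technical heart of the argument.
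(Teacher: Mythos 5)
Your proposal follows a density-increment scheme, which is not the paper's route, and it contains two genuine gaps, one of them fatal to the whole strategy. The fatal one is that a density increment does not close for a Khintchine-type (``$\eps$-optimal'') statement. When your iteration terminates you hold a rescaled set $A^{(j)}\subseteq[1,N^{(j)}]$ living on a progression of length $N^{(j)}\approx N/(Q_1\cdots Q_j)$, together with an $n$ for which $|A^{(j)}\cap(A^{(j)}+\tilde P_i(n))|>((\delta^{(j)})^2-\eps)N^{(j)}$ for all $i$. Undoing the rescaling only yields $|A\cap(A+P_i(n'))|\gtrsim(\delta^2-\eps)N^{(j)}$, which falls short of the required $(\delta^2-\eps)N$ by the enormous factor $N/N^{(j)}$. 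Unlike the ``nonempty intersection'' conclusion of Theorem \ref{LM1}, the conclusion (\ref{MainEq}) is normalized by the \emph{original} $N$ and is not inherited from subprogressions, so ``the density cannot exceed $1$'' does not finish the proof. This is precisely why the paper never passes to subprogressions: it decomposes $f=f_1+f_2+f_3$ globally on $\Z_N$ (Proposition \ref{U2}, a Koopman--von Neumann/arithmetic-regularity type decomposition), treats the structured part $f_1$ by simultaneous diophantine approximation (Lemma \ref{FourierMain}) and the uniform part $f_2$ by a moment estimate (Lemma \ref{FourierError}), and the tower bound arises from the choice of the cutoff $m$ in Proposition \ref{U2}, not from a density increment.

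The second gap is your minor-arc estimate. The sum $\mu(\alpha)=|B_{i_0}|^{-1}\sum_{n\in B_{i_0}}e(\alpha P_{i_0}(n))$ runs over an \emph{arbitrary} subset $B_{i_0}\subseteq[1,M]$ of density at least $1/\ell$, and Weyl's inequality gives no pointwise bound for exponential sums over arbitrary subsets: for a fixed minor-arc point $\alpha_0$ the set $B_{i_0}$ could consist exactly of those $n$ with $\Rre\, e(\alpha_0 P_{i_0}(n))>0$, making $|\mu(\alpha_0)|\gg 1$. The correct substitute is exactly the device used in the proof of Lemma \ref{FourierError}: one has no pointwise control of the weighted sum $S_w$, but its $2K$-th moment is controlled by the number of solutions of $P(n_1)+\cdots+P(n_K)=P(m_1)+\cdots+P(m_K)$ (Proposition \ref{nt}), which is insensitive to the weights. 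Your observation that the equal-degree hypothesis is needed to work at a single scale $M$ is correct, and is indeed where the paper's argument also uses that hypothesis.
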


As the reader will no doubt have noticed, the bounds obtained for $N_1(\eps,P_1,\dots,P_\ell)$, while explicit, are rather poor. It is our belief that these are far from the best bounds possible whose dependence on $\eps$ we would expect to be at least of exponential type.

We also note that, because of the introduction of different scales (specifically in Lemma \ref{FourierError}), our current Fourier analytic approach appears to be insufficient for the task establish a quantitative result along the lines of Theorem \ref{MainThm} for polynomials with different degrees. In particular, we are not aware of any quantitative result of this type even in the simplest case, namely $\ell=2$ with $P_1(n)=n$ and $P_2(n)=n^2$.

\subsection{An outline of the paper}\label{outline}

As we have been unable to find a proof of Theorem \ref{ergodic} in the literature, we give a complete proof of this result in Section \ref{ergsec}. We feel that the inclusion of this argument will also help illuminate for the reader the proof of our main result, namely Theorem \ref{MainThm}, which we present in Section \ref{fousec}.

In Section \ref{johnsec} we communicate an elegant combinatorial proof of Theorem \ref{John}, using Ramsey's theorem, that was shown to us by John Griesmer. We are grateful to John for both showing us this argument and giving us his permission to include it here.

Theorem \ref{LM1} was first established in \cite{LM1}, but only in the case of linearly independent polynomials. In Section \ref{liftsec} we include a  simple modification of the \emph{lifting} argument used in \cite{LM1}, to extend the original result in \cite{LM1} to the case of linearly dependent polynomials, thus establishing Theorem \ref{LM1} as stated above and in \cite{LM1'}.

Finally, we also include a short appendix on counting solutions to systems of polynomial diophantine equations as well as a somewhat lengthier appendix on simultaneous polynomial diophantine approximation.

\subsection{Notational convention}
Throughout the paper the letters $c$, $C$ will denote absolute
constants. These constants will generally satisfy $0<c\ll1\ll C$. Different instances of the notation, even on the same line,
will typically denote different constants.


\section{The proof of Theorem \ref{ergodic}}\label{ergsec}

Let $\VE>0$ and Let $(X,\MM,\mu,T)$ be an invertible measure preserving system.

We define $U_Tf(x):=f(Tx)$ and note that $U_T$ then defines a unitary operator on the Hilbert space of all square integrable function $L^2(X,\mu)$.
If we define $f=1_A$, then
\eq\label{intersection}
\mu(A\cap T^{-P_i(n)}A)=\langle f,U_T^{P_i(n)}f \rangle
\ee
for each $1\leq i\leq \ell$.

\subsection{Decomposition}
We now proceed by decomposing $f$ into an almost periodic (structured) component and a weakly-mixing (anti-structured) component, the so-called Koopman-von Neumann decomposition.

\begin{propn}[Koopman and von Neumann \cite{KvN}, see also \cite{M}]\label{KvN}
Let $\mathcal{H}:=L^2(X,\mu)$, then
\eq
\mathcal{H}=\mathcal{H}_c\oplus \mathcal{H}_{wm}\ee
where
\eq
\mathcal{H}_c=\left\{f\in \mathcal{H}\,:\, \left\{U_Tf\,:\,n\in\Z\right\} \ \text{is pre-compact}\right\}\ee
and
\eq
\mathcal{H}_{wm}=\left\{f\in \mathcal{H}\,:\, \lim_{N\to\infty}\frac{1}{N}\left|\left\{1\leq n\leq N\,:\,\left|\langle U_T^n f,g\rangle\right|\geq\VE\right\}\right|=0 \ \text{for all $\VE>0$ and $g\in \mathcal{H}$}\right\}.
\ee
Moreover, \[\mathcal{H}_{wm}=\mathcal{H}_{c}^\perp.\]
\end{propn}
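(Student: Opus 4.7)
The plan is to prove the Koopman--von Neumann decomposition via the spectral theorem for the unitary operator $U_T$ on $\HH$. The key identification is that $\HH_c$ coincides with the closed linear span of the eigenvectors of $U_T$ (equivalently, the pure-point-spectrum subspace), while $\HH_{wm}$ coincides with the continuous-spectrum subspace. Once both identifications are in place, the orthogonal decomposition $\HH=\HH_c\oplus\HH_{wm}$ with $\HH_{wm}=\HH_c^{\perp}$ is immediate from the Lebesgue decomposition of spectral measures on $\T$.

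First I would check that $\HH_c$ is a closed $U_T$-invariant linear subspace. Invariance and closure under scalar multiplication are immediate, and closure under sums follows since a sum of two totally bounded sets in $\HH$ is totally bounded. For closedness, given $f_j\to f$ with $f_j\in\HH_c$ and $\eps>0$, pick $j$ with $\|f-f_j\|<\eps/2$ and take an $\eps/2$-net for the orbit of $f_j$; since $U_T$ is unitary, translates of this net yield an $\eps$-net for the orbit of $f$.

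Next I would identify $\HH_c$ with the pure-point-spectrum subspace. Any eigenvector has orbit confined to a circle and is therefore in $\HH_c$, and $\HH_c$ is closed under finite sums and limits, giving one inclusion. For the converse, I would apply the spectral theorem on the cyclic subspace generated by $f$ to produce a finite positive measure $\sigma_f$ on $\T$ with $\langle U_T^n f, f\rangle=\int_{\T}e^{in\theta}\,d\sigma_f(\theta)$. If $\sigma_f$ has a continuous component, Wiener's identity
\[
\frac{1}{N}\sum_{n=1}^{N}|\langle U_T^n f, f\rangle|^2\longrightarrow\sum_{\theta\in\T}|\sigma_f(\{\theta\})|^2
\]
combined with standard arguments produces a sequence inside the orbit that is nearly orthogonal and therefore admits no convergent subsequence, contradicting pre-compactness. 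This step I expect to be the main technical obstacle.

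Finally I would characterize $\HH_{wm}$ as the continuous-spectrum subspace via the Koopman--von Neumann equivalence: for a bounded non-negative sequence $(a_n)$, the density-zero condition $\tfrac{1}{N}|\{n\leq N:a_n\geq\eps\}|\to 0$ for every $\eps>0$ is equivalent to $\tfrac{1}{N}\sum_{n=1}^{N}a_n^2\to 0$. Applying this with $a_n=|\langle U_T^n f, g\rangle|$ together with the bilinear spectral representation $\langle U_T^n f, g\rangle=\int_{\T}e^{in\theta}\,d\sigma_{f,g}(\theta)$, dominated convergence yields
\[
\frac{1}{N}\sum_{n=1}^{N}|\langle U_T^n f, g\rangle|^2\longrightarrow\sum_{\theta\in\T}|\sigma_{f,g}(\{\theta\})|^2,
\]
which vanishes for every $g\in\HH$ exactly when $\sigma_f$ is atomless. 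Combined with the previous paragraph this identifies $\HH_{wm}$ with $\HH_c^{\perp}$ and completes the decomposition.
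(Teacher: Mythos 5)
Your argument is essentially correct, but note that the paper does not prove this proposition at all: it is quoted as a classical result of Koopman and von Neumann, with \cite{M} given as a reference for a proof, so there is no in-paper argument to compare against. Your route is the standard spectral-theoretic one: identify $\mathcal{H}_c$ with the closed span of the eigenvectors of $U_T$ and $\mathcal{H}_{wm}$ with the continuous-spectrum subspace, and deduce orthogonality from the atomic/continuous decomposition of the spectral measures $\sigma_f$. All the individual steps are sound. Two places deserve slightly more care than your sketch gives them. First, in showing that a nonzero $f$ with continuous $\sigma_f$ has non-precompact orbit, the ``standard arguments'' are a greedy selection: Wiener's lemma gives that $\{n:|\langle U_T^nf,f\rangle|\geq\|f\|^2/2\}$ has density zero, so one can choose $n_1<n_2<\cdots$ with $|\langle U_T^{n_i}f,U_T^{n_j}f\rangle|<\|f\|^2/2$ for all $i\neq j$, whence $\|U_T^{n_i}f-U_T^{n_j}f\|^2\geq\|f\|^2$ and no subsequence converges; to run this you also need to first reduce to the case $\sigma_f$ continuous by writing $f=f_{pp}+f_{cont}$ and using that $\mathcal{H}_c$ is a subspace containing $\mathcal{H}_{pp}$, so $f_{cont}\in\mathcal{H}_c$. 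Second, in the characterization of $\mathcal{H}_{wm}$ you should record the inequality $|\sigma_{f,g}(\{\theta\})|^2\leq\sigma_f(\{\theta\})\,\sigma_g(\{\theta\})$, which is what shows that atomlessness of $\sigma_f$ kills the atoms of every cross-measure $\sigma_{f,g}$, while the converse follows by taking $g=f$. It is worth knowing that the cited source \cite{M} (McCutcheon's ``elemental methods'') deliberately avoids the spectral theorem and obtains the same decomposition by a direct Hilbert-space argument with almost periodic vectors; your proof is shorter if one is willing to assume the spectral theorem and Wiener's lemma.
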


Using Proposition \ref{KvN} we can therefore uniquely decompose
\eq
f=f_1+f_2,
\ee
with $f_1\in\mathcal{H}_{c}$ and $f_2\in\mathcal{H}_{wm}$. Moreover, it is easy to see that these functions also enjoy the property that $0\leq f_1\leq 1$ and  $\langle f_2,1\rangle=0$.
 We note that it follows immediately from the Cauchy-Schwarz inequality that
\eq
\mu(A)^2=\langle f,1\rangle^2=\langle f_1,1\rangle^2\leq\langle f_1,f_1\rangle.
\ee

\subsection{Proof of Theorem \ref{ergodic}}

Inserting our decomposition $f=f_1+f_2$ into (\ref{intersection}) we see that
\eq\label{inserting}
\mu(A\cap T^{-P_i(n)}A)=\langle f_1,U_T^{P_i(n)}f_1 \rangle+\langle f,U_T^{P_i(n)}f_2 \rangle+\langle U_T^{-P_i(n)}f_2,f_1 \rangle
\ee
for each $1\leq i\leq \ell$.

Our stategy to prove Theorem \ref{ergodic} will be to show that
$U_T^{P_i(n)} f_1\approx f_1$
and hence
\[\langle U_T^{P_i(n)}f_1,f_1\rangle\approx\langle f_1,f_1\rangle\geq\mu(A)^2\]
simultaneously for all $1\leq i\leq \ell$ for a positive proportion of $1\leq n\leq N$,
while for any given $g\in L^2(X,\mu)$ and $P\in\Z[n]$ the proportion of $1\leq n\leq N$ for which $\langle g,U_T^{P(n)}f_2 \rangle\approx0$ tends to $1$ as $N\to\infty$.

More precisely, we will establish the following two lemmas from which Theorem \ref{ergodic} follows immediately.

\begin{lem}[Main term estimate]\label{MainErgodic}
There exists $c_0=c_0(\eps,k,\ell, f_1)>0$ such that for all large $N$
\eq
\left|\left\{1\leq n\leq N\,:\,\left\|U_T^{P_i(n)}f_1-f_1\right\|\leq\eps/2  \text{ for all } 1\leq i\leq \ell\right\}\right|\geq c_0 N.
\ee
\end{lem}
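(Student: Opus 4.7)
Since $f_1\in\HH_c$ its orbit under $U_T$ is pre-compact, which via the spectral theorem is equivalent to $f_1$ belonging to the closed span of the eigenfunctions of $U_T$. Accordingly, write
\[
f_1=\sum_{j\geq 1}c_j\phi_j,\qquad U_T\phi_j=e^{2\pi i\alpha_j}\phi_j,\qquad \sum_j|c_j|^2<\infty,
\]
with $\{\phi_j\}$ orthonormal in $\HH$. First I would truncate: pick $J=J(\eps,f_1)$ large enough that $\tilde f_1:=\sum_{j=1}^J c_j\phi_j$ satisfies $\|f_1-\tilde f_1\|\leq\eps/8$. A direct computation using orthonormality then gives, for every integer $m$,
\[
\|U_T^m\tilde f_1-\tilde f_1\|^2=\sum_{j=1}^J|c_j|^2\,|e^{2\pi i\alpha_jm}-1|^2\leq 4\pi^2\sum_{j=1}^J|c_j|^2\,\|\alpha_jm\|_{\R/\Z}^2,
\]
so there is an $\eta=\eta(\eps,f_1)>0$ with the property that whenever $\max_{1\leq j\leq J}\|\alpha_jm\|_{\R/\Z}\leq\eta$ one has $\|U_T^m\tilde f_1-\tilde f_1\|\leq\eps/4$. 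Combining this with $\|f_1-\tilde f_1\|\leq\eps/8$, the triangle inequality, and the unitarity of $U_T^m$ yields $\|U_T^mf_1-f_1\|\leq\eps/2$.

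Applying this bound with $m=P_i(n)$ simultaneously for $i=1,\ldots,\ell$ reduces the lemma to the Diophantine statement that the set
\[
\bigl\{1\leq n\leq N\,:\,\|\alpha_jP_i(n)\|_{\R/\Z}\leq\eta\ \text{for all}\ 1\leq j\leq J,\ 1\leq i\leq\ell\bigr\}
\]
has cardinality at least $c_0N$ for some $c_0>0$ depending on $\eta$, $J$, $\{\alpha_j\}$ and $\{P_i\}$, and all sufficiently large $N$. This is a pure simultaneous polynomial Diophantine approximation problem for the finite family of $J\ell$ polynomials $\{\alpha_jP_i(n)\}_{j,i}$, each of degree $k$ and vanishing at $n=0$.

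The main obstacle is precisely this Diophantine step; the spectral reduction above is routine. The required positive-density lower bound is furnished by Weyl's equidistribution theorem: the polynomial curve $n\mapsto(\alpha_jP_i(n))_{j,i}$ equidistributes in some closed subtorus $\mathcal{K}\subseteq\T^{J\ell}$ that contains the origin (because every $P_i$ vanishes at $0$), and equidistribution on $\mathcal{K}$ immediately yields a positive proportion of $n\leq N$ in any prescribed neighborhood of the origin within $\mathcal{K}$. In the quantitative form needed here this is the content of the simultaneous polynomial Diophantine approximation appendix promised in Section \ref{outline}; once invoked, Lemma \ref{MainErgodic} follows. It is worth noting that this same obstacle is exactly the source of the poor quantitative bounds on $N_1(\eps,P_1,\ldots,P_\ell)$ in Theorem \ref{MainThm}, where it re-emerges as a requirement of simultaneous polynomial Diophantine approximation with uniform control across all scales.
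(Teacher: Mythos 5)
Your proof is correct, but it takes a genuinely different route from the paper's. You pass through the spectral characterization of $\HH_c$ as the closed span of the eigenfunctions of $U_T$, truncate to finitely many eigenfunctions, and reduce the lemma to a simultaneous polynomial Diophantine approximation statement for the finite family $\{\alpha_j P_i(n)\}$ --- which is exactly what Appendix B (Proposition \ref{newlemma}) supplies, so the one step you flag as the main obstacle is already furnished elsewhere in the paper. The paper instead works directly with the pre-compactness of the orbit $\{U_T^n f_1\}$ as given in Proposition \ref{KvN}: it covers the orbit by finitely many balls of diameter $\eta$, colors $\Z$ according to which ball each $U_T^{c_{ij}n^j}f_1$ falls into, extracts $\gtrsim N$ monochromatic $(k+1)$-term progressions with distinct common differences $d$ via van der Waerden's theorem, and uses the finite-difference identity $\sum_{t=0}^j (x+td)^j\binom{j}{t}(-1)^{j-t}=j!\,d^j$ to conclude that $\|U_T^{P_i(k!d)}f_1-f_1\|\le\eps/2$ for every such $d$. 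What each approach buys: the paper's argument never needs the (standard but nontrivial, and not stated in Proposition \ref{KvN}) fact that compact vectors lie in the closed span of eigenvectors, at the cost of invoking van der Waerden; yours runs parallel to the Fourier-analytic proof of Theorem \ref{MainThm} (Lemma \ref{FourierMain} is precisely the finite analogue of your Diophantine step) and is more readily made quantitative in $n$. One small imprecision in your sketch: the closure of $\{(\alpha_jP_i(n))_{i,j}:n\in\N\}$ need not be a subtorus of $\T^{J\ell}$ --- for rational frequencies it is a finite union of cosets of one --- but since $P_i(0)=0$ the component through the origin is visited by a positive proportion of $n$, and in any case the quantitative appendix result you invoke renders this point moot.
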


\begin{lem}[Error term estimate]\label{ErrorErgodic}
Let $P\in\Z[n]$ with $P(0)=0$ and $g\in L^2(X,\mu)$, then
\eq\label{Dlim}
\lim_{N\to\infty}\frac{1}{N}\left|\left\{1\leq n\leq N\,:\,|\langle U_T^{P(n)}f_2,g\rangle|\geq\VE/4\right\}\right|=0.
\ee
\end{lem}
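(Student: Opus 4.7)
The plan is to strengthen the conclusion to the $L^2$-average bound
\[\lim_{N\to\infty}\frac{1}{N}\sum_{n=1}^N |\langle U_T^{P(n)} f_2, g\rangle|^2 = 0,\]
from which \eqref{Dlim} follows immediately by Chebyshev's inequality. By the spectral theorem for the unitary operator $U_T$, there is a finite complex cross-spectral measure $\sigma = \sigma_{f_2,g}$ on $\T$ satisfying $\langle U_T^n f_2, g\rangle = \int_\T e^{2\pi i n\theta}\,d\sigma(\theta)$ for every $n \in \Z$. Since $f_2 \in \mathcal{H}_{wm}$, the positive spectral measure $\sigma_{f_2}$ is atomless, and the Cauchy--Schwarz estimate $|\sigma(A)|^2 \le \sigma_{f_2}(A)\,\sigma_g(A)$ for Borel sets $A\subseteq\T$ forces $\sigma$ itself to be atomless.

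Expanding the modulus squared and applying Fubini, the average $\frac{1}{N}\sum_n |\langle U_T^{P(n)} f_2, g\rangle|^2$ rewrites as
\[\iint_{\T^2}\left(\frac{1}{N}\sum_{n=1}^N e^{2\pi i P(n)(\theta-\psi)}\right) d\sigma(\theta)\, d\bar\sigma(\psi),\]
in which the inner Weyl sum is uniformly bounded by $1$. Since $P\in\Z[n]$ has $\deg P \ge 1$ with nonzero leading integer coefficient, Weyl's equidistribution theorem guarantees that the inner average tends pointwise to $0$ whenever $\theta-\psi$ is irrational. The exceptional set $\{(\theta,\psi)\in\T^2 : \theta-\psi\in\Q\}$ is a countable union of affine diagonals $\{(\theta,\theta-r) : \theta\in\T\}$, each of which satisfies $(\sigma\otimes\bar\sigma)(\{\theta-\psi=r\}) = \int \sigma(\{\psi+r\})\,d\bar\sigma(\psi) = 0$ by Fubini and the atomlessness of $\sigma$. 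Hence the exceptional set is $(\sigma\otimes\bar\sigma)$-negligible, and dominated convergence delivers the desired limit.

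The only genuine technical point is identifying the exceptional set where the Weyl averages fail to decay and verifying that it is $(\sigma\otimes\bar\sigma)$-null; both steps are routine once the spectral reformulation is in hand. A purely ergodic alternative that avoids Weyl's theorem is to induct on $k = \deg P$ via the Hilbert-space van der Corput inequality applied to $v_n := (U_T \otimes U_T)^{P(n)}(f_2 \otimes \bar f_2)$ on $L^2(X \times X)$: the cross-correlations $\langle v_{n+h}, v_n\rangle = |\langle U_T^{P(n+h)-P(n)} f_2, f_2\rangle|^2$ involve polynomials of degree $k-1$ (after absorbing the constant $P(h)$), and the base case $k=1$ follows from von Neumann's mean ergodic theorem upon observing that $f_2$ is orthogonal to every $U_T$-eigenvector.
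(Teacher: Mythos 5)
Your proof is correct, and your primary route is genuinely different from the paper's. The paper establishes the equivalent $L^2$-averaged statement \eqref{notDlim} by induction on $\deg P$: the tensor trick reduces it to showing that $\frac{1}{N}\sum_{n=1}^N (U_T\times U_T)^{P(n)}(f_2\times f_2)\to 0$ in $L^2(X\times X)$, and the Hilbert-space van der Corput lemma converts this into the same statement for the degree-lowered polynomials $P(n+h)-P(n)-P(h)$ (with the constant $P(h)$ absorbed into the test vector), the base case being exactly the density definition of $\mathcal{H}_{wm}$. This is precisely the ``purely ergodic alternative'' you sketch in your last paragraph, so your backup argument reproduces the paper's proof. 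Your main argument instead goes through the spectral theorem: you realize $\langle U_T^{P(n)}f_2,g\rangle$ as the Fourier--Stieltjes transform of the cross-spectral measure $\sigma_{f_2,g}$, deduce that this measure is atomless from the continuity of $\sigma_{f_2}$ together with $|\sigma_{f_2,g}(A)|^2\le\sigma_{f_2}(A)\,\sigma_g(A)$, and then apply Weyl equidistribution off the $(|\sigma|\otimes|\sigma|)$-null set $\{\theta-\psi\in\Q\}$; the remaining steps (irrationality of the leading coefficient $c_k(\theta-\psi)$ off that set, Fubini plus atomlessness to kill the countably many diagonals, dominated convergence against the uniformly bounded Weyl averages) all check out. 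The trade-offs are worth noting: your route outsources the van der Corput differencing to Weyl's theorem and quietly uses the spectral characterization of $\mathcal{H}_{wm}$ as the continuous-spectrum subspace, which is equivalent to the density definition in Proposition \ref{KvN} via Wiener's lemma but is not stated in the paper; the paper's induction is self-contained at the Hilbert-space level and needs nothing beyond the stated definition of weak mixing. In exchange, your argument is non-inductive and isolates transparently that rational spectrum is the only possible obstruction to decay.
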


Indeed, if $N$ is large enough then from Lemma \ref{MainErgodic} it follows that there must exist at least $c_0N$ values of $n\in [1,N]$ for which $\ \|U_T^{P_i(n)}f_1-f_1\|\leq\eps/2$ 
and hence
\[\langle f_1, U_T^{P_i(n)}f_1\rangle\geq\langle f_1,f_1\rangle-|\langle f_1, U_T^{P_i(n)}f_1-f_1\rangle|\geq\mu(A)^2-\eps/2\]
simultaneously for all $1\leq i\leq \ell$.
While from Lemma \ref{ErrorErgodic} it follows that if $N$ is taken sufficiently large then the absolute value of the last two error terms in (\ref{inserting}) can be made less than $\eps/4$ for all but at most $c_0N/2$ values of $n\in [1,N]$ simultaneously for $1\leq i\leq \ell$.
Thus for any $\VE>0$ there exists $n\in [1,N]$ (in fact a positive proportion) for which
\[\mu(A\cap T^{-P_i(n)}A)>\mu(A)^2-\eps\]
for all $1\leq i\leq \ell$. This completes the proof of Theorem \ref{ergodic}.\qed

\subsection{Proof of Lemmas \ref{MainErgodic} and  \ref{ErrorErgodic}}

The proof of Lemma \ref{MainErgodic} is based on van der Waerden's theorem and the magical identity
\eq\label{magic}
\sum_{t=0}^j (x+td)^j {j \choose t}\,(-1)^{j-t}\ =\ j!\,d^j
\ee
the validity of which can be easily verified for all $j\in\N$ by induction.
Lemma \ref{MainErgodic} is of course in essence a result on simultaneous diophantine approximation and the proof we present below is essentially an adaptation of the proof of Proposition 1.5 (on quadratic recurrence) in \cite{T}.

The proof of Lemma \ref{ErrorErgodic} follows from the Hilbert space version of van der Corput's Lemma (for a statement of this version see either \cite{B} or \cite{M}) and is well-known, but for the sake of completeness we have chosen to also sketch its proof below.

\begin{proof}[Proof of Lemma \ref{MainErgodic}]
Let $P_i(x)=\sum_{j=1}^k c_{ij} x^j$, and let $\eta=\eta(\eps,k)>0$ be a small constant to be chosen later.

Cover the orbit $\{T^n f\,:\, n\in\Z\}$ by balls $B_1,\ldots,B_M$ of diameter $\eta$ and use this to define a (matrix-valued) coloring $\chi:\Z\to [1,M]^{k\ell}$ of the integers by setting, for each $1\leq i\leq \ell$ and $1\leq j\leq k$, $\chi_{ij}(n)=r$ if $U_T^{c_{ij}n^j} f_1\in B_r$.

By (the averaged version of) van der Waerden's theorem there is a constant $c_1=c_1(M,k,\ell)$
such that the number of monochromatic $(k+1)$-term arithmetic progressions in $[1,N]$ is at least $c_1 N^2$, provided $N$ is sufficiently large.
Note that this implies that there will be at least $c_1 N$ monochromatic $(k+1)$-term arithmetic progressions in $[1,N]$ with \emph{different} step sizes $d$.

Let $d$ be the step size of a monochromatic arithmetic progression $\{x+td:\ 0\leq t\leq k\}$ with respect to the coloring $\chi$ in $[1,N]$. Then for any fixed $i,j$ one has
\[\|U_T^{c_{ij}(x+td)^j} f_1- U_T^{c_{ij}x^j} f_1\|\leq\eta\]
for all $0\leq t\leq k$,
thus by (\ref{magic}) it follows that
\begin{align*}
\left\|U_T^{c_{ij}\,j!d^j} f_1-f_1\right\|&=\left\|U_T^{c_{ij}\sum_{t=0}^j(x+td)^j {j \choose t}(-1)^{j-t}} f_1-f_1\right\|\\
&=\left\|U_T^{c_{ij}\sum_{t=0}^j(x+td)^j {j \choose t}(-1)^{j-t}} f_1-U_T^{c_{ij}\sum_{t=0}^j x^j {j \choose t}(-1)^{j-t}} f_1\right\|\\
&\leq \sum_{t=0}^j {j \choose t}\left\|U_T^{c_{ij}(x+td)^j}f_1-U_T^{c_{ij}x^j} f_1\right\|\\
&\leq2^j\eta.
\end{align*}

Here we have used the facts that $\|U_T^{m_1+m_2}f_1-U_T^{n_1+n_2}f_1\|\leq \|U_T^{m_1}f_1-U_T^{n_1}f_1\|+\|U_T^{m_2}f_1-U_T^{n_2}f_1\|$ and $\|U_T^{bn}f_1-U_T^{bm}f_1\|\leq |b|\,\|U_T^n f_1-U_T^m f_1\|$ which follows from the triangle inequality and the fact that $U_T$ is a unitary operator on $L^2(X,\mu)$.
Thus
\[\left\|U_T^{c_{ij}(k!d)^j}f_1-f_1\right\|\leq(k!^j/j!)\left\|U_T^{j!c_{ij}d^j}f_1-f_1\right\|\leq(k!)^k 2^k\eta.\]

Letting $n=k! d$ it follows that for all $1\leq i\leq \ell$ we have
\eq \left\|U_T^{P_i(n)}f_1-f_1\right\|\leq k(k!)^k 2^k\eta\leq\eps/2\ee
provided $\eta$ is chosen small enough.
Since the number of such $d\in [1,N/k!]$ is at least $(c_1/k!)N$, the lemma follows.
\end{proof}

\begin{proof}[Proof of Lemma \ref{ErrorErgodic}]
We give a proof by induction, using the fact that (\ref{Dlim}) is equivalent to
\eq\label{notDlim}
\lim_{N\to\infty}\frac{1}{N}\sum_{n=1}^N\left|\langle U_T^{P(n)}f_2,g\rangle\right|^2=0
\ee
and that when $\deg P=1$, that is $P(x)=mx$ for some $m\in\Z$, then the conclusion of the lemma is an immediately consequence of the weak-mixing properties of $f_2$.

Let $k\geq 1$, $P\in\Z[n]$ be a polynomial of degree $k+1$, and assume that (\ref{notDlim}) holds for all polynomials of degree at most $k$. We will show that (\ref{notDlim}) holds for all $g\in L^2(X,\mu)$. To this end we note that
\[\frac{1}{N}\sum_{n=1}^N\left|\langle U_T^{P(n)}f_2,g\rangle\right|^2=\frac{1}{N}\sum_{n=1}^N\left\langle (U_T\times U_T)^{P(n)}(f_2\times f_2),g\times g\right\rangle_{X\times X}\]
and hence that it suffices to show that
\[\lim_{N\to\infty}\left\|\frac{1}{N}\sum_{n=1}^N (U_T\times U_T)^{P(n)}(f_2\times f_2) \right\|_{L^2(X\times X)}=0.\]

Let $x_n:= (U_T\times U_T)^{P(n)}(f_2\times f_2)$ for $n\in\Z$, and let $h\in\Z\setminus\{0\}$. Since
\begin{align*}
\left\langle x_{n+h},x_{n}\right\rangle_{X\times X}&=\left\langle(U_T\times U_T)^{P(n+h)-P(n)-P(h)}(f_2\times f_2),(U_T\times U_T)^{-P(h)}(f_2\times f_2)\right\rangle_{X\times X}\\
&=\left|\left\langle U_T^{P(n+h)-P(n)-P(h)}f_2,U_T^{-P(h)}f_2\right\rangle\right|^2
\end{align*}
it follows from the inductive hypothesis, since the polynomial $P(n+h)-P(n)-P(h)$ has degree at most $k$ and $U_T^{-P(h)}f_2\in L^2(X,\mu)$, that
\[\lim_{N\to\infty}\frac{1}{N}\sum_{n=1}^N \left\langle x_{n+h},x_{n}\right\rangle_{X\times X}=0.\]
The claim now follows from the Hilbert space version of van der Corput's Lemma, see either \cite{B} or \cite{M}.
\end{proof}



\section{The proof of Theorem \ref{MainThm}}\label{fousec}

Let $\VE>0$ and $P_1,\dots,P_\ell\in\Z[n]$ with $P_i(0)=0$ and $\deg P_i=k$ \emph{for all} $1\leq i\leq \ell$.

\subsection{The Fourier transform, uniformity and polynomial shifts}
Let $\Z_N$ denote the group $\Z/N\Z$.
\subsubsection{The Fourier transform}
Given $f:\Z_N\to\C$ we define its (discrete) Fourier transform, $\widehat{f}:\widehat{\Z_N}\to\C$, by
\[\widehat{f}(\xi)=\frac{1}{N}\sum_{x\in\Z_N}f(x)e(-x\xi/N)\]
where $e(x)=e^{2\pi i x}$ and $\widehat{\Z_N}$ denote the \emph{dual group} of of all characters on $\Z_N$.

It is easy to see that we can identify $\Z_N$ with its dual. There are two natural measures that one can put on $\Z_N$, namely uniform probability measure and counting measure. As is customary, we shall use the uniform probability measure on $\Z_N$ and the counting measure on $\Z_N$ when it is being identified with its dual group. We then define $L^p$-norms and $\ell^p$-norms as follows.

We define $L^p$ to be the space of all functions from $\Z_N$ to $\C$, with the norm
\[\|f\|_p=\Bigl(\frac{1}{N}\sum_{x\in\Z_N}|f(x)|^p\Bigr)^{1/p},\]
where this is interpreted as $\max_{x\in\Z_N}|f(x)|$ when $p=\infty$. We define $\ell^p$ to be the space of all functions from $\widehat{\Z_N}$ to $\C$, with the norm
\[\|F\|_p=\Bigl(\sum_{\xi\in\Z_N}|F(\xi)|^p\Bigr)^{1/p},\]
where we again interpreted this as $\max_{\xi\in\Z_N}|F(\xi)|$ when $p=\infty$.

 In contrast to the situation for the Fourier transform on $\R$, the Fourier inversion formula and Plancherel's identity, namely
\[f(x)=\sum_{\xi\in\Z_N}\widehat{f}(\xi)e(x\xi/N)\quad\quad\text{and}\quad\quad
\|f\|_2=\|\widehat{f}\|_2\] are, in this setting, immediate and simple consequences of the familiar orthogonality relation
\[\frac{1}{N}\sum_{x\in\Z_N}e(x\xi/N)=\begin{cases}
1\quad\text{if \ $\xi=0$}\\
0\quad\text{if \ $\xi\ne0$}
\end{cases}.\]

\subsubsection{Uniformity and polynomial shifts}
We now fix a set $A\subseteq[1,N]$.
In order to use Fourier analytic techniques we will, as is customary, identify $[1,N]$ with $\Z_N$ and consider $A$ as a subset of $\Z_N$. In order to ensure that working in $\Z_N$ will not, in any essential way, affect the validity of (\ref{MainEq}), we will restrict our attention to those values of $n$ for which
\eq\label{M} 1\leq n\leq M:=c(\eps N)^{1/k}\ee
with $c=c(P_1,\dots,P_\ell)$ chosen sufficiently small such that $|P_i(n)|\leq \eps N$ for all $1\leq i\leq \ell$. Note that doing this we will ensure that the size of $A\cap (A+P_i(n))$ will increase by at most $\eps N$, due to overlapping when the shifts $P_i(n)$ take place in $\Z_N$, and as such working in $\Z_N$ will not affect
the validity of (\ref{MainEq}) (other than changing $\VE$ to $2\VE$).

Note that if we define $f=1_A$, then
\eq\label{intersectionF}
\frac{|A\cap(A+P_i(n))|}{N}=\frac{1}{N}\sum_{x\in\Z_N}f(x)f(x-P_i(n))
\ee
for each $1\leq i\leq \ell$.

It is well-known (and easy to verify, see Lemma 2.2 in \cite{gowers1}) that if $g,h:\Z_N\to\C$ with both $\|g\|_2$ and $\|h\|_2$ bounded by $1$, then $\|\widehat{h}\|_\infty\leq \eta$ is equivalent to
\[\frac{1}{N}\sum_{n=1}^N\left|\frac{1}{N}\sum_{x\in\Z_N}h(x)g(x-n)\right|^2\leq \eta\]
and consequently that
\[\left|\left\{1\leq n\leq N\,:\, \left|\frac{1}{N}\sum_{x\in\Z_N}h(x)g(x-n)\right|\geq \eta^{1/3}\right\}\right|\leq \eta^{1/3}N\]
whenever $h$ satisfies the uniformity assumption that $\|\widehat{h}\|_\infty\leq \eta$.

The main aim of this section is to show that this phenomenon continues to hold for each of the polynomial shifts $P_i(n)$ for almost all $n$ satisfying (\ref{M}).
In particular we show the following.

\begin{lem}[Error term estimate]\label{FourierError}
Let $\VE>0$, $P\in\Z[n]$ with $\deg P=k\geq2$ and $M=c(\eps N)^{1/k}$.

If $g,h:\Z_N\to\C$ with $\|g\|_2\leq 1$, $\|h\|_2\leq1$ and $\|\widehat{h}\|_\infty\leq\eta$ with $0<\eta\leq\eps$,
then
\eq
\left|\left\{1\leq n\leq M\,:\, \left|\frac{1}{N}\sum_{x\in\Z_N}h(x)g(x-P(n))\right|\geq \eta^{1/K}\right\}\right|\leq C_1\,\eta^{1/K}M
\ee
for any positive integer $K\geq C\,k^2\log k$, where then $C_1$ is a large constant depending only on $P$.
\end{lem}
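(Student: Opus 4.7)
The plan is to bound the $2K$-th moment $\sum_{n=1}^M |F(n)|^{2K}$, where $F(n) := \frac{1}{N}\sum_x h(x)\, g(x - P(n))$, and then apply Chebyshev's inequality at the level $\eta^{1/K}$. As a preliminary reformulation, I would introduce the correlation $G(y) := \frac{1}{N}\sum_x h(x)\, g(x-y)$, so that $F(n) = G(P(n))$. A direct computation yields $\widehat G(\xi) = \widehat h(\xi)\,\widehat g(-\xi)$, whence Cauchy--Schwarz gives $\|G\|_\infty \leq \|h\|_2\|g\|_2 \leq 1$, the hypothesis $\|\widehat h\|_\infty \leq \eta$ gives $\|\widehat G\|_\infty \leq \eta$, and Plancherel gives $\|G\|_2 \leq \eta$.

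Expanding $|G(P(n))|^{2K} = G(P(n))^K\,\overline{G(P(n))}^K$ via Fourier inversion and summing in $n$ produces the key identity
\[
\sum_{n=1}^M |G(P(n))|^{2K} \;=\; \sum_{\Lambda \in \Z_N} \widehat{|G|^{2K}}(\Lambda)\, W(\Lambda), \qquad W(\Lambda) := \sum_{n=1}^M e\bigl(P(n)\Lambda/N\bigr).
\]
The diagonal term $\Lambda = 0$ contributes $M\, \widehat{|G|^{2K}}(0) = M\|G\|_{2K}^{2K} \leq M\eta^2$ by interpolating between $\|G\|_\infty \leq 1$ and $\|G\|_2 \leq \eta$. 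For the off-diagonal ($\Lambda \neq 0$) contribution I would combine H\"older's inequality with a Weyl--Vinogradov type moment bound on the polynomial exponential sum $W(\Lambda)$: once $K \geq Ck^2\log k$, Vinogradov's mean value theorem provides a polynomial saving on $\sum_\Lambda |W(\Lambda)|^{2K}$, which, when balanced against the Plancherel bound $\sum_\Lambda |\widehat{|G|^{2K}}(\Lambda)|^2 \leq \eta^2$, should yield an off-diagonal contribution of size at most $CM\eta^{2+1/K}$ as long as $N$ is sufficiently large relative to $\eta^{-1}$. Chebyshev's inequality at the level $\eta^{1/K}$ then turns this moment estimate into the required level-set bound.

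The principal technical obstacle is the off-diagonal analysis: one must show that the oscillation of $e(P(n)\Lambda/N)$ in $n$ offsets the potentially substantial concentration of the Fourier coefficients $\widehat{|G|^{2K}}(\Lambda)$ and recovers an extra $\eta^{1/K}$ saving beyond the diagonal $M\eta^2$. The threshold $K \geq Ck^2\log k$ is precisely the range in which Vinogradov-style mean value estimates on polynomial exponential sums of degree $k$ deliver their near-optimal savings, which is presumably the source of the specific $k^2\log k$ in the statement. A secondary technical point is that the scale $M = c(\varepsilon N)^{1/k}$ keeps the polynomial values $P(n)$ safely within an interval of length $\varepsilon N$, which is what allows the mod-$N$ Diophantine count underlying Vinogradov's theorem to reduce to a count over $\Z$; this is presumably also why the argument is tied so rigidly to the single degree $k$.
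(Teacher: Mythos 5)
Your identification of the arithmetic engine is exactly right and matches the paper: the Hua--Vinogradov count of solutions to $P(n_1)+\cdots+P(n_K)=P(m_1)+\cdots+P(m_K)$ in $[1,M]^{2K}$ is the source of the threshold $K\geq Ck^2\log k$, and the scale $M=c(\eps N)^{1/k}$ is indeed what reduces the mod $N$ count to a count over $\Z$ (this is Proposition \ref{nt} and Appendix A). But the superstructure --- Chebyshev applied to the $2K$-th moment of $F(n)=G(P(n))$ over $n\in[1,M]$ --- cannot deliver the lemma. First, the exponents do not close even if both of your claimed bounds held: Chebyshev at level $\eta^{1/K}$ from a $2K$-th moment bound $\mathcal{M}$ gives $\#\{n:|F(n)|\geq\eta^{1/K}\}\leq\eta^{-2}\mathcal{M}$, so you need $\mathcal{M}\leq C\eta^{2+1/K}M$; your diagonal term alone is $M\|G\|_{2K}^{2K}\leq M\eta^2$, which exceeds the target by a factor $\eta^{-1/K}$ and returns only the trivial bound $M$. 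Second, the moment really is that large (and larger): take $h=g=(N/|B|)^{1/2}1_B$ with $B$ an interval of length $\asymp\eta^2N$ and $\eta=\eps$. Then $\|\widehat h\|_\infty\asymp\eta$, $G$ is a tent of height $1$ on an interval of length $\asymp\eta^2N$ about $0$, and $\sum_{n\leq M}|G(P(n))|^{2K}\gtrsim\#\{n\leq M:|P(n)|\leq\eta^2N/2K\}\gtrsim_{K,P}M\eta^{1/k}$, which dwarfs both $M\eta^2$ and your claimed off-diagonal bound $CM\eta^{2+1/K}$. (The same example explains why the off-diagonal step is stuck: pairing $\widehat{|G|^{2K}}(\Lambda)$ against $\|W\|_{2K}$ by H\"older requires an $\ell^{2K/(2K-1)}$ bound on $\widehat{|G|^{2K}}$ with $2K/(2K-1)<2$, which cannot be extracted from the available $\ell^2$ and $\ell^\infty$ information, while Cauchy--Schwarz against $\|W\|_2\asymp(NM)^{1/2}$ loses a factor $(N/M)^{1/2}$.) In short, the lemma permits $|F(n)|$ to be close to $1$ on a set of $\approx\eta^{1/K}M$ values of $n$, and a $2K$-th moment over-weights exactly that set; Chebyshev cannot recover the $\eta^{-2}$ it costs.

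The repair is to estimate the \emph{first} moment and to keep the bilinear structure $\widehat G(\xi)=\widehat h(\xi)\overline{\widehat g(\xi)}$ intact rather than collapsing it into $|G|^{2K}$. This is the paper's route: dualize with unimodular weights $w(n)$ to write $\sum_{n\leq M}|F(n)|=\sum_{\xi}\widehat h(\xi)\overline{\widehat g(\xi)}S_w(\xi)$ with $S_w(\xi)=\sum_{n\leq M}w(n)e(P(n)\xi/N)$, apply H\"older with the three exponents $(2,\tfrac{2K}{K-1},2K)$, extract the gain from $\|\widehat h\|_{2K/(K-1)}\leq\|\widehat h\|_\infty^{1/K}\|\widehat h\|_2^{(K-1)/K}\leq\eta^{1/K}$, and feed your Vinogradov input in only through $\|S_w\|_{2K}\leq(C_0NM^{2K-k})^{1/2K}\leq C\eta^{-1/2K}M$ (this is where $\eta\leq\eps$ and $M=c(\eps N)^{1/k}$ are used). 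This yields $\sum_{n\leq M}|F(n)|\leq C_1\eta^{1/2K}M$, and Markov at level $\eta^{1/4K}$ gives the stated level-set bound with $4K$ in place of $K$, which suffices after relabelling.
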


\subsubsection{Theorem \ref{MainThm} for uniform sets}
We quickly remark that from Lemma \ref{FourierError} we can immediately deduce Theorem \ref{MainThm} in the special case of (suitably) uniform sets $A$.
Recall that a set $A$ is said to be $\eta$-uniform if $|\widehat{1_A}(\xi)|\leq\eta$ for all $\xi\in\Z_N\setminus\{0\}$, or equivalently $\|\widehat{f_A}\|_\infty\leq\eta$, where $f_A=1_A-|A|/N$ denotes the so-called balanced function of $A$.
Inserting the decomposition $f=f_A+|A|/N$ into (\ref{intersectionF}) and recalling that the function $f_A$ has mean value zero, we obtain the following corollary to Lemma \ref{FourierError}.

\begin{cor}\label{uniform} 
If $A\subseteq\Z_N$ is $\eta$-uniform with $0<\eta\leq\eps^K$ and $K\geq C\,k^2\log k$, then there necessarily exist at least $(1-\ell C_1\eta^{1/K})M$ values of $n\in[1,M]$ (and hence at least one if $\eta^{1/K}\leq\min\{\eps, 1/2\ell C_1\}$) for which
\[\left|\frac{|A\cap(A+P_i(n))|}{N}-\left(\frac{|A|}{N}\right)^2\right|<\VE\]
holds simultaneously for $1\leq i\leq \ell$.
\end{cor}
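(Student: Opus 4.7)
The plan is to reduce the corollary directly to Lemma \ref{FourierError} by inserting the Koopman-von Neumann style decomposition $f = f_A + |A|/N$ into the counting identity (\ref{intersectionF}). Expanding the product $f(x)f(x-P_i(n))$ yields four terms: the constant $(|A|/N)^2$, two ``cross'' terms linear in $f_A$, and the quadratic term $f_A(x)f_A(x-P_i(n))$. Averaging over $x\in\Z_N$, the two linear terms vanish because $f_A$ has mean zero (this is the reason for passing to the balanced function), so the identity collapses to
\eq\label{collapsed}
\frac{|A\cap(A+P_i(n))|}{N} - \left(\frac{|A|}{N}\right)^2 \;=\; \frac{1}{N}\sum_{x\in\Z_N} f_A(x)\,f_A(x-P_i(n)).
\ee

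Next I would verify the hypotheses of Lemma \ref{FourierError} for the choice $g=h=f_A$. A direct computation gives $\|f_A\|_2^2 = (|A|/N)(1 - |A|/N) \leq 1$, and on the Fourier side $\widehat{f_A}(0) = 0$ while $\widehat{f_A}(\xi) = \widehat{1_A}(\xi)$ for $\xi \neq 0$; hence the $\eta$-uniformity of $A$ is exactly the bound $\|\widehat{f_A}\|_\infty \leq \eta$. Since $\eta \leq \eps^K \leq \eps$ and $K \geq Ck^2\log k$, Lemma \ref{FourierError} applies to each $P_i$ individually and yields
\eq
\left|\left\{1\leq n\leq M : \left|\tfrac{1}{N}\sum_{x}f_A(x)f_A(x-P_i(n))\right|\geq \eta^{1/K}\right\}\right| \leq C_1\,\eta^{1/K}\,M.
\ee

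Finally I would take a union bound in $i$. The set of $n\in[1,M]$ at which (\ref{collapsed}) exceeds $\eta^{1/K}$ in absolute value for at least one $i$ has size at most $\ell C_1 \eta^{1/K} M$, so the complementary set of simultaneously good $n$ has size at least $(1 - \ell C_1\eta^{1/K})M$. For every such $n$ the right-hand side of (\ref{collapsed}) is strictly less than $\eta^{1/K} \leq \eps$ for all $1 \leq i \leq \ell$ simultaneously, which is the claimed estimate. The nonemptiness clause is immediate: the hypothesis $\eta^{1/K} \leq 1/(2\ell C_1)$ forces $(1 - \ell C_1\eta^{1/K})M \geq M/2 \geq 1$. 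There is no real obstacle here beyond bookkeeping; all of the analytic work has already been absorbed into Lemma \ref{FourierError}, and the uniformity of $A$ has been recast in a form to which that lemma applies without loss.
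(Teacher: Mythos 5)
Your proposal is correct and is exactly the argument the paper intends: insert the balanced-function decomposition $f=f_A+|A|/N$ into (\ref{intersectionF}), use that $f_A$ has mean zero to kill the cross terms, apply Lemma \ref{FourierError} with $g=h=f_A$ (whose hypotheses you correctly verify, with $\eta$-uniformity of $A$ being precisely $\|\widehat{f_A}\|_\infty\leq\eta$), and finish with a union bound over $i$. Nothing to add.
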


\subsubsection{Proof of Lemma \ref{FourierError}}

We make use of the following well-known fact from number theory which we state here, and whose proof will be given in Appendix A for the sake of
completeness.

\begin{propn}\label{nt} Let $P\in\Z[n]$ with $\deg P=k$ and
$K\geq C\,k^2 \log\,k$ be a natural number. Then for any $M\in\N$, the number of $2K$-tuples
$(n_1,\ldots,n_K,m_1,\ldots,m_K)\in [1,M]^{2K}$ satisfying
\eq\label{2.5}
P(n_1)+\cdots +P(n_K)=P(m_1)+\cdots +P(m_K)\ee
is bounded by $C_0 M^{2K-k}$, where $C_0$ is a large constant depending only on $P$.
\end{propn}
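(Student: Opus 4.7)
My plan is to reduce the counting problem to a mean value estimate for an exponential sum and then use the Hardy--Littlewood circle method together with Vinogradov's mean value theorem. First, by orthogonality of the characters $e(\alpha N)$ on $\Z$, the number of $2K$-tuples satisfying \eqref{2.5} equals
\[
\mathcal{N} \;=\; \int_0^1 |T(\alpha)|^{2K}\,d\alpha, \qquad T(\alpha) := \sum_{n=1}^M e(\alpha P(n)),
\]
so the task becomes to show $\mathcal{N} \leq C_0 M^{2K-k}$. I would split $[0,1] = \mathfrak{M} \cup \mathfrak{m}$ into major arcs $\mathfrak{M}$ (small neighborhoods of rationals $a/q$ with $(a,q)=1$, $q \leq Q$) and minor arcs $\mathfrak{m}$, for a parameter $Q = M^{\eta}$ with $\eta = \eta(k) > 0$ small.

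On the minor arcs, the goal is a Weyl-type pointwise bound $|T(\alpha)| \leq C_P\,M^{1-\sigma}$ for some $\sigma = \sigma(k) \gtrsim 1/(k^2\log k)$. The cleanest route is Vinogradov's mean value theorem, which bounds the number of solutions of the full system $\sum_i n_i^j = \sum_i m_i^j$ ($1 \leq j \leq k$) by $C M^{2K - k(k+1)/2 + \varepsilon}$ once $K \geq Ck^2\log k$; standard arguments (Dirichlet-type pigeonholing on Diophantine approximations to $\alpha$) then transfer this multidimensional estimate into the desired pointwise bound on $T$. Integrating gives
\[
\int_{\mathfrak{m}} |T(\alpha)|^{2K}\,d\alpha \;\leq\; C_P\,M^{2K - 2K\sigma} \;\leq\; C_P\,M^{2K-k},
\]
where the last inequality holds because the hypothesis $K \geq Ck^2\log k$ is tailored precisely so that $2K\sigma \geq k$.

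On the major arcs, writing $\alpha = a/q + \beta$ with $q \leq Q$ and $|\beta|$ small, one has the standard factorization $T(\alpha) \approx q^{-1} S(a,q)\,I(\beta)$, where $S(a,q) = \sum_{r=1}^{q} e((a/q)P(r))$ is a complete Gauss-type sum and $I(\beta) = \int_0^M e(\beta P(t))\,dt$ is its continuous analogue. Classical estimates $|S(a,q)| \ll_P q^{1 - 1/k}$ (Weyl) and $|I(\beta)| \ll \min(M,|\beta|^{-1/k})$ (van der Corput) allow one to integrate over $\mathfrak{M}$ and obtain the same upper bound $C_P M^{2K-k}$ on the major arc contribution. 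Summing the two estimates completes the proof.

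The main obstacle is the Weyl-type minor arc bound, which is the real content of Vinogradov's theorem; everything else is routine bookkeeping. A minor subtlety is that the lower-order coefficients $a_{k-1},\ldots,a_1$ of $P$ turn the relevant Vinogradov exponential sum from $\sum e(\alpha n^k)$ into a multivariable version $\sum e(a_1\alpha n+\cdots+a_k\alpha n^k)$ evaluated along the line $(a_1\alpha,\ldots,a_k\alpha)$, but since the leading coefficient $a_k$ is nonzero and fixed, the Weyl bound is unaffected up to an absorbed constant $C_P$ depending only on $P$.
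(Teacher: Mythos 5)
Your reduction to $\mathcal N=\int_0^1|T(\alpha)|^{2K}\,d\alpha$ is fine, but the minor-arc step contains a genuine quantitative gap that breaks the argument in the stated range of $K$. The best pointwise saving available from Vinogradov's method (and this is essentially optimal among known bounds) is $\sigma\asymp 1/(k^2\log k)$, so your requirement $2K\sigma\geq k$ forces $K\gtrsim k/\sigma\asymp k^3\log k$, not $K\geq Ck^2\log k$. The claim that the hypothesis is ``tailored precisely so that $2K\sigma\geq k$'' is arithmetically false: with $K=Ck^2\log k$ and $\sigma=c/(k^2\log k)$ one only gets $2K\sigma=O(1)$. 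Since the bound for the minimal admissible $K$ is the whole content of the proposition (larger $K$ follow trivially by pulling out factors of $|T|^2\leq M^2$), losing a factor of $k$ here means the proposition as stated is not proved. There is also a secondary issue that Weyl-type bounds and the major-arc asymptotics typically carry $M^{\varepsilon}$ losses, whereas the proposition asserts a clean constant $C_0(P)$, but this is cosmetic compared to the loss of a power of $k$.

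The paper avoids the circle method dissection entirely and, in effect, shows why no pointwise bound is needed: writing $P(x)=c_kx^k+\cdots+c_1x$ and $s_i=\sum_j x_j^i-\sum_j y_j^i$, a tuple solves \eqref{2.5} iff $c_1s_1+\cdots+c_ks_k=0$, and the number of tuples realizing a \emph{prescribed} vector $(s_1,\dots,s_k)$ is
\[
J_{K,k}(M;s)=\int_{[0,1]^k}|S(\theta)|^{2K}e^{-2\pi i s\cdot\theta}\,d\theta\ \leq\ J_{K,k}(M;0)=J_{K,k}(M)\ \leq\ C_kM^{2K-k(k+1)/2},
\]
the last inequality being Hua's form of Vinogradov's mean value theorem for $K\geq Ck^2\log k$ (no $\varepsilon$). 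Since $c_k\neq0$, the constraint determines $s_k$ from $s_1,\dots,s_{k-1}$, and $|s_i|\leq KM^i$, so summing over the at most $C_KM^{1+2+\cdots+(k-1)}=C_KM^{k(k-1)/2}$ admissible choices gives $C_0M^{2K-k}$. In other words, the full strength $M^{-k(k+1)/2}$ of the Vinogradov system is cashed in directly, and the factor $M^{k(k-1)/2}$ from the free lower moments brings it down to exactly $M^{-k}$ with no further loss; your route instead asks the (much weaker) pointwise estimate to produce the entire $M^{-k}$ saving, which it cannot do at $K\asymp k^2\log k$. If you want to keep a circle-method flavour, you would in any case need this counting identity as your mean value input on the minor arcs, at which point the arc dissection becomes superfluous for an upper bound.
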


In order to prove Lemma \ref{FourierError}, with $4K$ in place of $K$, it is suffices to
show that
\eq\label{sum}
\sum_{n=1}^M \left|\frac{1}{N}\sum_{x\in\Z_N}h(x)g(x-P(n))\right|\leq C_1\eta^{1/2K}M.
\ee

In order to verify (\ref{sum}) we introduce a weight function $w:\Z_N\to \{-1,1\}$ such that the left side
of (\ref{sum}) may be written (using  Fourier inversion) as
\eq
\frac{1}{N}\sum_{n=1}^M\sum_{x\in\Z_N}h(x)g(x-P(n)) w(n)=
\sum_{\xi\in\Z_N}\widehat{h}(\xi)\overline{\widehat{g}(\xi)}S_w(\xi)
\ee
where
\eq
S_w(\xi)=\sum_{n=1}^M w(n)e(P(n)\xi/N).
\ee

Note that we have no control over the weight function $w(n)$ and
hence cannot hope for any non-trivial pointwise bound on $|S_w(\xi)|$, nevertheless we can obtain sharp estimates for the
higher moments of $S_w$ using Proposition \ref{nt}.
Indeed for any positive integer $K\geq
C\,k^2\,\log\,k$ one estimates
\begin{align*}
\|S_w\|_{2K}^{2K}&=\sum_{\xi\in\Z_N} |S_w(\xi)|^{2K}\\
&= \!\!\!\!\sum_{\substack{n_1,\dots,n_K =1\\ m_1\dots,m_K=1}}^M \!\!\!\!w(n_1)\cdots%
w(m_K)\sum_{\xi\in\Z_N} e\left(\left(P(n_1)+\cdots
+P(n_K)-P(m_1)-\cdots -P(m_K)\right)\xi/N\right)\\
&\leq N\left|\left\{(n_1,\dots,n_K,m_1\ldots,m_K)\in[1,M]^{2K}\,:\,
P(n_1)+\cdots +P(n_K)= P(m_1)+\cdots +P(m_K)\right\}\right|.
\end{align*}
Since $1\leq n_i,m_i\leq M$ and $M = c\,(\eps N)^{1/k}$ with a
sufficiently small constant $c$, the equality
\[P(n_1)+\cdots+P(n_K)= P(m_1)+\cdots +P(m_K)\]
holds in $\Z_N$ if and only it holds in $\Z$.

It therefore follows from Proposition \ref{nt} and H\"older's inequality that
\[\sum_{\xi\in\Z_N} |\widehat{h}(\xi)||\widehat{g}(\xi)||S_w(\xi)|\ \leq\ \|\widehat{g}\|_2\,
\|\widehat{f}\|_\frac{2K}{K-1}\, \|S_w\|_{2K} \leq \eta^\frac{1}{K}(C_0NM^{2K-k})^{1/2K}\leq C_1\eta^{1/2K}M\]
with $C_1=(C_0/c^k)^{1/2K}$
since
$\|\widehat{h}\|_\frac{2K}{K-1}\leq \|\widehat{h}\|_\infty^{1/K}\|\widehat{h}\|_2^{(K-1)/K}\leq\eta^\frac{1}{K}$ and $M=c(\VE N)^{1/k}$.\qed




\subsection{Decomposition}

In order to exploit the phenomenon exhibited in Lemma \ref{FourierError} in a proof of Theorem \ref{MainThm} one would naturally try, as we did in the proof of Theorem \ref{ergodic}, to make use of a decomposition theorem that will allow use to decompose $f$ into a structured component and a suitably uniform (anti-structuered) component.

It is easy to see that for any given $\eta>0$ and $f:\Z_N\to\C$ with $\|f\|_2\leq 1$, then the number of $\xi\in\Z_N$ such that $|\widehat{f}(\xi)|\geq\eta$ is at most $\eta^{-2}$, since $\|\widehat{f}\|_2\leq1$. Using the Fourier inversion formula together with this fact we certainly split $f=g+h$, where
\[g(x)= \sum_{\xi\in\Gamma} \widehat{f}(\xi)e(\xi x/N) \ \ \text{and} \ \ h(x)=\sum_{\xi\notin\Gamma} \widehat{f}(\xi)e(\xi x/N)\]
with $\Gamma=\{\xi\in\Z_N\,:\,|\widehat{f}(\xi)|\geq\eta\}$. It is then immediate that $|\widehat{h}(\xi)|<\eta$, and that $g$ is indeed ``structured'' in the sense that it involves only a bounded number of characters.

As we shall see below this simple decomposition will unfortunately be insufficient for our purpose. The problem being that we need a much stronger relationship between  $\|\widehat{h}\|_\infty$ and the upper bound on the size of $\Gamma$. The following result, which shows that one can indeed obtain this, modulo a small $L^2$-error, is part of the standard folklore of additive combinatorics (see for example \cite{G} Proposition 2.5).

\begin{propn}\label{U2} Let $f:\Z_N\to\C$ with $\|f\|_2\leq 1$. Write $\Z_N=\{\xi_1,\dots,\xi_N\}$ so that $|\widehat{f}(\xi_1)|\geq\cdots\geq |\widehat{f}(\xi_N)|$.

For every $\VE>0$ and $\eta:\N\to\R_+$ a positive function that decreases to 0, there exists $m=m(\VE,\eta)\in\N$ and a decomposition
\[f=f_1+f_2+f_3,\]
with 
\[f_1(x)=\sum_{j=1}^m \widehat{f}(\xi_j)e(\xi_jx/N)\]
while $\|\widehat{f_2}\|_\infty\leq\eta(m)$ and $\|f_3\|_2\leq\VE$.
\end{propn}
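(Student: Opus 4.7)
My plan is an energy-increment pigeonhole argument driven by Plancherel's identity $\|\widehat f\|_2=\|f\|_2\leq 1$. The key observation is that, for any threshold $\alpha>0$, Chebyshev applied to $\widehat f$ together with Plancherel gives $|\{\xi:|\widehat f(\xi)|\geq\alpha\}|\leq \alpha^{-2}$. So if we fix cut-off levels $M_j$ in advance (depending only on $\eta$, not on $f$), we can guarantee that beyond $M_{j+1}$ all Fourier coefficients of $f$ drop below $\eta(M_j)$.

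Concretely, I would recursively set $M_0=0$ and
\[M_{j+1}=\max\bigl(M_j+1,\ \lceil\eta(M_j)^{-2}\rceil\bigr),\]
so that, by the decreasing ordering of $|\widehat f(\xi_i)|$, $|\widehat f(\xi_i)|<\eta(M_j)$ for every $i>M_{j+1}$. I would then set $J=\lceil\VE^{-2}\rceil$ and apply the pigeonhole principle: since
\[\sum_{j=0}^{J-1}\ \sum_{M_j<i\leq M_{j+1}}|\widehat f(\xi_i)|^2\ \leq\ \|\widehat f\|_2^2\ \leq\ 1,\]
there must exist some $0\leq j<J$ with $\sum_{M_j<i\leq M_{j+1}}|\widehat f(\xi_i)|^2\leq 1/J\leq\VE^2$. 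I take $m=M_j$; crucially, $m\leq M_{J-1}$ depends only on $\VE$ and $\eta$, not on $f$.

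With this choice of $m$, the decomposition arises by partitioning the Fourier support into three consecutive blocks of indices. Set
\[f_1(x)=\sum_{i=1}^{m}\widehat f(\xi_i)e(\xi_ix/N),\quad f_3(x)=\sum_{m<i\leq M_{j+1}}\widehat f(\xi_i)e(\xi_ix/N),\quad f_2(x)=\sum_{i>M_{j+1}}\widehat f(\xi_i)e(\xi_ix/N).\]
Plancherel applied to $f_3$ gives $\|f_3\|_2\leq\VE$, while the construction of $M_{j+1}$ yields $\|\widehat{f_2}\|_\infty\leq\eta(M_j)=\eta(m)$, as required.

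I do not expect any real obstacle; the only subtle point is that the threshold sequence $\{M_j\}$ must be chosen a priori rather than greedily in terms of the actual Fourier coefficients of $f$, since otherwise one has no uniform control on $m$. Paying the $L^2$-error $f_3$ at a carefully pigeonholed level is precisely what buys us this uniformity, and explains why the naive ``just keep the modes of size $\geq\eta$'' decomposition (which has no $f_3$) is insufficient for the iterative application we have in mind later.
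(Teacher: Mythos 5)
Your argument is correct and is essentially the paper's own proof: both fix in advance a threshold sequence growing via $t\mapsto\eta(t)^{-2}$, use Plancherel to force the tail Fourier coefficients below $\eta(m)$, and pigeonhole over the $\lceil\VE^{-2}\rceil$ disjoint Fourier blocks to find one of $L^2$-norm at most $\VE$. The only cosmetic difference is that the paper starts the sequence at $m_1=1$ rather than $M_0=0$, which guarantees $m\geq1$ as the statement requires (and, in the later application, that the zero frequency lands in $f_1$).
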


We note that the proof of this result, we include below for completeness, gives us the desired decomposition for some natural number $m$ that is bounded above by a number that results from starting with $1$ and applying the function $t\mapsto\eta(t)^{-2}$ at most $\VE^{-2}$ times.

\begin{proof}[Proof of Proposition \ref{U2}, \cite{G}] Choose an increasing sequence of positive integers $m_1,m_2,\dots$ with $m_1=1$ and $m_{r+1}\geq \eta(m_r)^{-2}$ for every $r$. We now choose $r$ and attempt to prove the result using the decomposition $f=f_1+f_2+f_3$ with
\[f_1(x)=\sum_{j\leq m_r} \widehat{f}(\xi_j) e(\xi_jx/N),\ \ f_2(x)=\sum_{j>m_{r+1}}\widehat{f}(\xi_j) e(\xi_jx/N),\ \ f_3(x)=\sum_{m_r<j\leq m_{r+1}}\widehat{f}(\xi_j) e(\xi_jx/N).\]
Then $f_1$ is a linear combination of at most $m_r$ characters. Also $\|\widehat{f_2}\|_\infty\leq m_{r+1}^{-1/2}\leq \eta(m_r)$, since, by Plancherel's theorem, there can be at most $m_{r+1}$ Fourier coefficients of $f$ whose magnitude is at least $m_{r+1}^{-1/2}$.

Therefore, we are done if $\|f_3\|_2\leq\eps$. But the possible functions $f_3$ (as $r$ varies) are disjoint parts of the Fourier decomposition of $f$, so at most $\VE^{-2}$ of them can have norm greater then $\VE$. Thus there exists $r\leq\VE^{-2}$ such that the proposed decomposition works.
\end{proof}

To find the appropriate function $\eta(m)$ we use the following result from the theory of diophantine approximation, the proof of this result is presented in Appendix B.

\begin{lem}[Main term estimate]\label{FourierMain}
There exists a constant $C_{k,\ell}$ such that for all $0<\VE<1/2$ we have
\eq \left|\left\{1\leq n< M\,:\, \|P_i(n)\xi_j/N\| < \VE \ \text{for all} \ 1\leq i\leq \ell,\,1\leq j\leq m\right\}\right|\geq (\eps/m)^{C_{k,\ell} m^2}M \ee
provided $M\geq(\eps/m)^{-C_{k,\ell}m^2}$, where $\|\A\|$ denotes, for each $\A\in\R$, the distance from  $\A$ to the nearest integer.
\end{lem}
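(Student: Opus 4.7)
The plan is to establish the lemma as a quantitative polynomial analogue of Dirichlet's simultaneous approximation theorem, proved by an iterated pigeonhole and Weyl-differencing argument on the torus, in the same spirit as the proof of Lemma~\ref{MainErgodic}.

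As a first reduction, write $P_i(n)=\sum_{s=1}^{k} c_{is}n^s$ with $c_{is}\in\Z$ depending only on the polynomials $P_i$. By the triangle inequality it is enough to produce $n\in[1,M]$ satisfying the $k\ell m$ monomial conditions
\[\bigl\|c_{is}\,n^s\,\xi_j/N\bigr\|<\eps/C_0(P_1,\ldots,P_\ell),\qquad 1\le s\le k,\ 1\le i\le\ell,\ 1\le j\le m.\]
Absorbing the integer coefficients $c_{is}$ into the constants, the task becomes a purely monomial one: simultaneous approximation of $(n,n^2,\ldots,n^k)$ at the finite list of frequencies $\xi_j/N$.

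I would then prove this reduced statement by induction on the degree $k$. The base case $k=1$ is the classical simultaneous Dirichlet theorem: partition $\T^{m}$ into $(Cm/\eps)^{m}$ cubes and apply pigeonhole to the map $n\mapsto(n\xi_j/N)_{j=1}^{m}$; whenever $M\ge(Cm/\eps)^{m}$ this produces many pairs lying in a common cube, and their differences form a Bohr-type set of density at least $(\eps/m)^{Cm}$ in $[1,M]$ satisfying every linear constraint. For the inductive step $k-1\to k$, I would invoke the magic identity~(\ref{magic}): if $\{x+td:0\le t\le k\}$ is a $(k{+}1)$-term AP on which the lower-degree monomial maps $n\mapsto n^s\xi_j/N$ (for $s<k$) are all nearly constant modulo $1$, then~(\ref{magic}) forces $\|k!\,d^{k}\xi_j/N\|$ to be correspondingly small. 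One therefore needs a quantitative guarantee that a positive proportion of $(k{+}1)$-APs in $[1,M]$ enjoy this approximate-constancy property; this can be arranged by applying the inductive hypothesis in the enlarged torus $\T^{(k-1)m}$ (handling all lower monomials at once) and then extracting $(k{+}1)$-APs from the resulting dense Bohr set via a quantitative van der Waerden type estimate, or more efficiently by a convolution step inside the Bohr-set structure itself.

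The main obstacle lies in the bookkeeping of quantitative losses through the $k$-fold induction. Every differencing move must be performed simultaneously across all $m$ frequencies at each level rather than one frequency at a time, since a naive per-frequency approach would pick up an exponential blow-up of the form $m^{k}$ in the exponent. The exponent $C_{k,\ell}m^{2}$ in the final bound reflects the cost of an $m$-dimensional pigeonhole at each of the $k$ recursive stages, together with the final van der Waerden step inside a Bohr set of dimension $O(m)$, inside which $(k{+}1)$-APs arise with density only $(\eps/m)^{O(m)}$. The threshold $M\ge(\eps/m)^{-C_{k,\ell}m^{2}}$ is precisely the size needed for the base-case Dirichlet pigeonhole to have room to operate at every one of the $k$ levels of the recursion.
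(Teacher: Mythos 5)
Your opening reduction (passing from $\|P_i(n)\xi_j/N\|<\VE$ to the $k\ell m$ monomial conditions $\|c_{is}n^s\xi_j/N\|<\VE/k$ via the triangle inequality) is exactly the paper's first step. After that the two arguments part ways, and the route you choose has a genuine quantitative gap at the inductive step. To run the Weyl differencing via the identity (\ref{magic}) you need a positive proportion of $(k+1)$-term progressions $\{x+td\}$ in $[1,M]$ on which the map $n\mapsto(n^s\xi_j/N)_{s<k,\,j\le m}$ is nearly constant, i.e.\ all $k+1$ points land in the same $\VE$-cube of $\T^{(k-1)m}$. That is precisely a monochromatic-AP statement for a colouring of $[1,M]$ with $R=(Cm/\VE)^{C(k-1)m}$ colours, and the only tools that produce it are van der Waerden (whose bounds, even after Gowers, are tower-type in $R$) or Szemer\'edi applied to the densest colour class (worse still). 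Either way the density of usable common differences $d$ you extract is astronomically smaller than $(\VE/m)^{Cm^2}$, so the induction cannot close with the claimed exponent. Note that the inductive hypothesis itself is of no direct help here: it gives $n$ with $\|n^s\xi_j/N\|$ \emph{small}, whereas differencing needs the lower monomials \emph{nearly constant along an AP}, which is a different (and colouring-theoretic) requirement; your alternative suggestion of ``a convolution step inside the Bohr set'' is not specified enough to bridge this. The quantitative form of the lemma is the entire point — it is what keeps $N_1$ in Theorem \ref{MainThm} down to a tower of height $O(\VE^{-2})$ — so losing van der Waerden-type factors at each of the $k$ levels is fatal, not cosmetic.

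The paper avoids this entirely: after the same monomial reduction (Proposition \ref{newlemma}) it proves the vector statement (Proposition \ref{vector}) by adapting the Green--Tao lattice argument. One introduces the Gaussian-weighted count $F_{\La,\al}(N)$ built from theta functions, and shows (Proposition \ref{induct}) that either $F_{\La,\al}(N)\ge 1/2$ or, by Weyl's inequality in the guise of Schmidt's alternative (Lemma \ref{S}), there is a small denominator $q$ and a primitive dual vector $\xi_i$ along which one can descend to a lattice of one lower dimension at a multiplicative cost $d^{-C}(4A_\La)^{-C_k}$. Iterating at most $d=k\ell m$ times yields the lower bound $(\VE/d)^{C_kd^2}$ — the $m^2$ in the exponent comes from $d$ descent steps each costing $(\VE/d)^{O(d)}$, not from any pigeonhole in $\T^{m}$. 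Your sketch is essentially the qualitative ergodic-style argument of Lemma \ref{MainErgodic} transplanted to the finite setting; it would prove the lemma with \emph{some} threshold $M_0(\VE,m)$, but not with the stated bound.
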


\subsection{Proof of Theorem \ref{MainThm}}
Inserting the decomposition from Proposition \ref{U2}, for say $\eps/8$ and a positive decreasing function $\eta:\N\mapsto\R_+$ to be chosen later (using Lemma \ref{FourierMain}), into (\ref{intersectionF}) we obtain
\begin{align*}
\frac{|A\cap(A+P_i(n))|}{N}&=\frac{1}{N}\sum_{x\in\Z_N}f_1(x)f_1(x-P_i(n))+\frac{1}{N}\sum_{x\in\Z_N}(f_2+f_3)(x)g(x-P_i(n))\\
&\geq \left| \frac{1}{N}\sum_{x\in\Z_N}f_1(x)f_1(x-P_i(n)) \right|-\left| \frac{1}{N}\sum_{x\in\Z_N}(f_2+f_3)(x)g(x-P_i(n))  \right|
\end{align*}
for each $1\leq i\leq \ell$,
where $g:\Z_N\mapsto\R$ is some function that satisfies $\|g\|_2\leq 2$.

By the Cauchy-Schwarz inequality it follows that
\eq
\left|\frac{1}{N}\sum_{x\in\Z_N}f_3(x)g(x-P_i(n))\right|\leq \|f_3\|_2 \|g\|_2\leq \eps/4\ee
for all $n\in\N$, while from Lemma \ref{FourierError} it follows that for all but at most $\ell C_1\eta(m)^{1/K}M$ values of $n\in[1,M]$ we also have
\eq
\left|\frac{1}{N}\sum_{x\in\Z_N}f_2(x)g(x-P_i(n))\right|<\eta(m)^{1/K}
\ee
simultaneously for all $1\leq i\leq \ell$.

Using the fact that $\xi_1=0$ (a consequence of $f$ is non-negative) one can conclude that for each $1\leq i\leq \ell$ we have
\begin{align*}
\left|\frac{1}{N}\sum_{x\in\Z_N}f_1(x)f_1(x-P_i(n))\right|&=\left|\sum_{j=1}^m |\widehat{f}(\xi_j)|^2 e(\xi_j P_i(n)/N)\right|\\
&\geq\sum_{j=1}^m |\widehat{f}(\xi_j)|^2-\sum_{j=1}^m |\widehat{f}(\xi_j)|^2 \left|e(\xi_j P_i(n)/N)-1\right|\\
&\geq |\widehat{f}(\xi_1)|^2-\VE/2\\
&=\left(\frac{|A|}{N}\right)^2-\frac{\VE}{2}
\end{align*}
provided that $|e(\xi_j P_i(n)/N)-1|\leq \eps/2$ for all $1\leq j\leq m$.
It therefore follows from Lemma \ref{FourierMain}, since $|e(x)-1|\leq 2\pi\|x\|$, that for at least $(\eps/4\pi m)^{C_{k,\ell} m^2}M$ values of $n\in[1,M]$ the main terms will satisfy
\[\left|\frac{1}{N}\sum_{x\in\Z_N}f_1(x)f_1(x-P_i(n))\right|\geq \left(\frac{|A|}{N}\right)^2-\frac{\VE}{2}\]
simultaneously for all $1\leq i\leq \ell$, provided $M\geq(\eps/4\pi m)^{-C_{k,\ell}m^2}$.

Therefore, if we choose $\eta(t):=(\ell C_1)^{-K} (\eps/4\pi t)^{C_{k,\ell}Kt^2}/2$, it follows that
$\ell C_1\eta(m)^{1/K} <(\eps/4\pi m)^{C_{k,\ell} m^2}$,
thus establishing Theorem \ref{MainThm} with
\[N_1(\VE,P_1,\dots,P_\ell)=C(\eps/4\pi m)^{-kC_{k,\ell}m^2-1}\]
where the constant $C$ depends only on the polynomials $P_1,\dots,P_\ell$.

Since, as we remarked above, $m$ is bounded above by a number that results from starting with $1$ and applying the function $t\mapsto\eta(t)^{-2}$ at most $\VE^{-2}$ times, $N_1$ will clearly be a tower type bound with height proportional to $\VE^{-2}$. To be more precise, it is not hard to verify that
\[(\eps/4\pi m)^{-kC_{k,\ell}m^2-1}=T(C^*_{k,\ell}+C\eps^{-2}),\] where $T(1)=1,\ T(j+1)=2^{T(j)}$ is the tower function.

Finally we remark that if one uses explicit Vinogradov type bounds for the Weyl sums, then one obtains the constant $C_{k,\ell}=C k^C \ell^C$ in Proposition \ref{nt} and hence $C_{k,\ell}^*\leq \log^* (k) +\log^* (\ell) +C^*$ where $\log^*$ denotes the inverse of the tower function $T(n)$.
\qed



\section{Simultaneous Linear Recurrence}\label{johnsec}

In this section we will give a combinatorial proof of Theorem \ref{John}, shown to us by John Griesmer \cite{John}.


\subsection{A combinatorial proof of Khintchine's theorem}

We first establish the result for $\ell=1$ (which of course corresponds to the case $m=0$). This is a quantitative formulation of the $\VE$-optimal extension of Poincar\'e's recurrence theorem, due to Khintchine, that was discussed in the introduction.

\begin{lem}[Bergelson \cite{B2}]\label{khinchin}
Let $(X,\MM,\mu,T)$ be an invertible measure preserving system and $A\in\mathcal{M}$. For every $\VE>0$ and $B\subseteq\N$ with
$|B|\geq \VE^{-1}$, there exists a non-zero $n\in B-B$  such that
\eq
\mu(A\cap T^{-n} A)>\mu(A)^2-\eps.
\ee
\end{lem}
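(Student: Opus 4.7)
The plan is to prove Lemma \ref{khinchin} via a direct Hilbert-space pigeonhole argument in $L^2(X,\mu)$; no structure theory is needed here since only pairwise correlations of a \emph{single} shift of $1_A$ are involved. First, I will introduce, for each $b\in B$, the function $g_b:=U_T^{b}1_A$, which is the indicator of $T^{-b}A$ and satisfies $\int g_b\,d\mu=\mu(A)$. Since $T$ is measure preserving, the crucial inner-product identity is
\[\langle g_{b_1},g_{b_2}\rangle=\mu(T^{-b_1}A\cap T^{-b_2}A)=\mu\bigl(A\cap T^{-(b_2-b_1)}A\bigr).\]

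Next, setting $F:=\sum_{b\in B}g_b$ and applying the Cauchy--Schwarz inequality in the probability space $(X,\mu)$ will give
\[\sum_{b_1,b_2\in B}\mu\bigl(A\cap T^{-(b_2-b_1)}A\bigr)=\|F\|_2^2\geq\Bigl(\int F\,d\mu\Bigr)^2=|B|^2\mu(A)^2.\]
I will then separate off the $|B|$ diagonal terms (each equal to $\mu(A)$) and average over the remaining $|B|(|B|-1)$ ordered off-diagonal pairs, producing some $b_1\neq b_2$ in $B$ with
\[\mu\bigl(A\cap T^{-(b_2-b_1)}A\bigr)\;\geq\;\mu(A)^2-\frac{\mu(A)-\mu(A)^2}{|B|-1}.\]
Bounding $\mu(A)(1-\mu(A))\leq 1/4$ and invoking the hypothesis $|B|\geq\VE^{-1}$ will then force the error to be strictly less than $\VE$; taking $n:=b_2-b_1$ (or its negative, using $\mu(A\cap T^{n}A)=\mu(A\cap T^{-n}A)$ by measure preservation) delivers the required non-zero element of $B-B$.

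Because the entire argument reduces to a pure Hilbert-space pigeonhole, I do not anticipate any real obstacle. The only point requiring a touch of care is the elementary numerical verification that $|B|\geq\VE^{-1}$ indeed forces the \emph{strict} inequality $>\mu(A)^2-\VE$, which is routine given the $1/4$ saving from $\mu(A)(1-\mu(A))\leq 1/4$, and which can in any event be absorbed into a negligible constant in the hypothesis on $|B|$ without altering the qualitative statement.
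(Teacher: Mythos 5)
Your proposal is correct and is essentially the paper's own argument: both apply Cauchy--Schwarz to $\sum_{b\in B}1_A\circ T^{b}$, use measure preservation to rewrite the cross terms as $\mu(A\cap T^{-(b_2-b_1)}A)$, discard the $|B|$ diagonal terms, and pigeonhole over the off-diagonal pairs. Your bookkeeping is in fact slightly sharper (dividing by $|B|(|B|-1)$ and using $\mu(A)(1-\mu(A))\leq 1/4$), but the method is identical.
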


\begin{proof}
Let $\VE>0$ and $v_1,\dots,v_N$ be distinct natural numbers with $N\geq\VE^{-1}$.
Since
\[N\mu(A)=\int_X \sum_{j=1}^N 1_A(T^{v_j}x)\,d\mu\]
it follows from the Cauchy-Schwarz inequality that
\[N^2\mu(A)^2\leq\int_X \Bigl(\sum_{j=1}^N 1_A(T^{v_j}x)\Bigr)^2\,d\mu\]
and hence that
\[\frac{1}{N^2}\sum_{1\leq j,k\leq N}\mu(T^{-v_j}A\cap T^{-v_k}A)\geq \mu(A)^2.\]
It then follows from the fact that there are only $N$ diagonal terms that there must exist a pair $1\leq j<k\leq  N$ for which
\[\mu(A\cap T^{-(v_k-v_j)}A)\geq \mu(A)^2-\VE.\qedhere\]
\end{proof}

\subsection{Proof of Theorem \ref{John}}

Let $m$ be a non-negative integer. It clearly suffices to prove the Theorem for $\ell=2^m$. We proceed by induction on $m$, noting that Lemma \ref{khinchin} above covers the base case, namely when $m=0$, since we can simply apply that result to the transformation $T^{c_1}$ in place of $T$ for any given $c_1\in\Z\setminus\{0\}$.

We now assume that the result holds for a given non-negative integer $m$, fix $c_1,\dots,c_{2^{m+1}}\in\Z\setminus\{0\}$ and distinct natural numbers $v_1,\dots,v_N$ with $N\geq2^{2L}$ where $\log_2^mL\geq C\VE^{-1}$.

We now define a coloring of the edges of the complete graph $K_N$ on the vertices $v_1,\dots,v_N$. Color the edge between vertices $v_j$ and $v_k$ \emph{red} if
\[\mu(A\cap T^{-c_i(v_j-v_k)} A)>\mu(A)^2-\eps\]
for all $1\leq i\leq 2^m$, and color it \emph{blue} if this is not the case.

Since $N\geq 2^{2L}$ and $\log_2^mL\geq C\VE^{-1}$, it follows from Ramsey's theorem and the inductive hypothesis that $K_N$ must contain a complete \emph{red} subgraph with at least $L$ vertices. In other words, there exists a sub-collection
\[\{w_1,\dots,w_L\}\subseteq\{v_1,\dots,v_N\}\]
such that \emph{for every} $1\leq j,k\leq L$ we have
\[\mu(A\cap T^{-c_i(w_j-w_k)} A)>\mu(A)^2-\eps\]
for all $1\leq i\leq 2^m$. Applying the inductive hypothesis once more, this time with $c_{2^m+1},\dots,c_{2^{2m}}$ to the collection $w_1,\dots,w_L$ of distinct natural numbers obtained above, it follows that there necessarily exists a pair $1\leq j<k\leq L$ such that
\[\mu(A\cap T^{-c_i(w_j-w_k)} A)>\mu(A)^2-\eps\]
for all $2^m+1\leq i\leq 2^{2m}$. This completes the proof.\qed


\section{The proof of Theorem \ref{LM1}}\label{liftsec}
It is easy to see that in order to prove Theorem \ref{LM1} it suffices to establish the following reformulation.

\begin{thm}\label{1}
Let $k\geq 2$ and $P_1,\ldots,P_\ell \in\Z[n]$ with $P_i(0)=0$ and $\deg P_i\leq k$ \emph{for all} $1\leq i\leq \ell$.

If $A\subseteq[1,N]$ and
$\{P_1(n),\dots ,P_\ell (n)\}\nsubseteq A-A$
for any $n\ne0$,
then we necessarily have
\[\frac{|A|}{N}\leq C\left(\frac{\log\log N}{\log N}\right)^{1/\ell(k-1)}\]
for some absolute constant $C=C(P_1,\dots,P_\ell )$.
\end{thm}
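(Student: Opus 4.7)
The strategy is to reduce the general (possibly linearly dependent) case of Theorem \ref{1} to the linearly independent case, which is already established in \cite{LM1}. The reduction is via a lifting construction that encodes the $\ell$-fold simultaneous avoidance problem for $A$ as a \emph{single-polynomial} avoidance problem for a set $\tilde A$ obtained from $A$ by base-$\tilde N$ digit expansion, after which the $\ell = 1$ case of \cite{LM1} (which is trivially linearly independent) may be applied as a black box.

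\textbf{The construction.} Choose a constant $C_0 = C_0(P_1,\dots,P_\ell)$ large enough that $|P_i(n)| < C_0 N/2$ for all $1\leq i\leq \ell$ and all $n\in[1,cN^{1/k}]$, and set $\tilde N := C_0 N$. Define the lifted set
\[
\tilde A := \{a_1 + a_2\tilde N + \dots + a_\ell \tilde N^{\ell-1} : a_1,\dots,a_\ell\in A\} \subseteq [1,\tilde N^\ell],
\]
together with the single lifted polynomial
\[
\tilde P(n) := P_1(n) + P_2(n)\tilde N + \dots + P_\ell(n)\tilde N^{\ell-1}\in\Z[n].
\]
Then $|\tilde A| = |A|^\ell$, $\deg\tilde P\leq k$, and $\tilde P(0)=0$. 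By the uniqueness of the digit representation in base $\tilde N$, for every $n$ with $|P_i(n)|<\tilde N/2$ for all $i$, one has $\tilde P(n)\in \tilde A - \tilde A$ if and only if $P_i(n)\in A-A$ for every $1\leq i\leq\ell$. Hence the simultaneous avoidance hypothesis on $A$ translates into the single-polynomial avoidance property for $\tilde A$ with polynomial $\tilde P$ on the relevant range of $n$.

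\textbf{Completing the argument.} Apply the $\ell=1$ case of \cite{LM1} to $\tilde A\subseteq[1,\tilde N^\ell]$ with polynomial $\tilde P$ to obtain
\[
\frac{|\tilde A|}{\tilde N^\ell}\leq C_1\left(\frac{\log\log\tilde N^\ell}{\log\tilde N^\ell}\right)^{1/(k-1)}.
\]
Substituting $|\tilde A|=|A|^\ell$ and $\tilde N^\ell=(C_0 N)^\ell$, and using $\log\tilde N^\ell\asymp \ell\log N$ and $\log\log\tilde N^\ell\asymp\log\log N$, taking $\ell$-th roots yields the desired bound
\[
\frac{|A|}{N}\leq C\left(\frac{\log\log N}{\log N}\right)^{1/\ell(k-1)}
\]
for a constant $C$ depending only on $P_1,\dots,P_\ell$.

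\textbf{Main obstacle.} The chief technical difficulty is controlling the constant $C_1$ arising from the single-polynomial application, which \emph{a priori} may depend on the coefficients of $\tilde P$; these are as large as $\tilde N^{\ell-1}$ and therefore grow with $N$. One must inspect the proof in \cite{LM1} and verify that this dependence is at worst polynomial (so it can be harmlessly absorbed into the final constant after taking $\ell$-th roots, at the cost of a tiny modification of the $(\log\log N/\log N)$ term), or else modify the lift so that the effective coefficients remain bounded independently of $N$ (for instance, by using a coprime-moduli encoding in place of the naive digit expansion). A secondary, routine point is to check that the range $n\in[1,cN^{1/k}]$ forced by digit uniqueness is compatible with the range in which the applied theorem produces its configuration.
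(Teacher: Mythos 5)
Your reduction differs from the paper's, and it has a genuine gap at exactly the point you flag as the ``main obstacle'': the constant in the single-polynomial theorem depends on the polynomial, and your lifted polynomial $\tilde P$ changes with $N$. Its coefficients have size roughly $\tilde N^{\ell-1}\asymp N^{\ell-1}$, so $C_1=C(\tilde P)$ is not a constant at all but a function of $N$. Your suggested resolution --- that a polynomial dependence on the coefficients ``can be harmlessly absorbed'' --- does not work: polynomial in the coefficient height here means polynomial in $N$, and no power of $N$, however small, can be absorbed into a bound of the shape $(\log\log N/\log N)^{1/\ell(k-1)}$; the conclusion would become vacuous. The dependence of $C(P)$ (and of the threshold $N_0(P)$) on the coefficients in Lucier's theorem and in \cite{LM1} is certainly at least polynomial in the height --- it enters through the Weyl-sum estimates and the major-arc/density-increment analysis --- and nothing remotely like a $\log\log(\text{height})$ dependence is available. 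The alternative fix you propose (a coprime-moduli encoding) fails for the same reason: to preserve differences the moduli must each exceed $2\max_i|P_i(n)|\gtrsim N$, so the CRT-interpolated polynomial again has coefficients of size $\gtrsim N^{\ell}$. There is also the secondary issue you note, that the lift only converts the hypothesis for $n$ in the restricted range $|P_i(n)|<\tilde N/2$, so the single-polynomial result cannot be invoked as a literal black box even apart from the constant.

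For contrast, the paper avoids this trap by lifting in \emph{dimension} rather than in coefficient size: writing $P_i(n)=\sum_j c_{ij}n^j$ with coefficient matrix $\mathcal P$ of rank $r$, it constructs $B=\{b\in[-N',N']^k:\mathcal P(b)\in A^\ell-m\}$ for a suitable shift $m$, shows by pigeonholing over shifts that $|B|\geq c\,\delta^\ell N^k$, and observes that $(n,n^2,\dots,n^k)\in B-B$ forces $(P_1(n),\dots,P_\ell(n))\in A^\ell-A^\ell$. It then applies the $k$-dimensional result (Theorem \ref{2}) for the fixed curve $(n,n^2,\dots,n^k)$, whose constant depends only on $k$; the dependence on the $P_i$ is confined to the construction of $B$ and the factor $\delta^\ell$, which is what produces the exponent $1/\ell(k-1)$ after taking $\ell$-th roots. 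Your base-$\tilde N$ digit idea correctly captures the combinatorial equivalence (the balanced-digit uniqueness argument is fine), but to salvage it you would have to reprove the one-dimensional theorem with explicit, essentially coefficient-free constants, at which point you are no longer using it as a black box.
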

As remarked in Section \ref{outline}, Theorem \ref{1} was established for families of linearly independent polynomials in \cite{LM1}.
In the case of a single polynomial ($\ell=1$), this result has originally obtained by Lucier \cite{Lucier} (with slightly weaker bounds) and, to the best of our knowledge, constitutes the best bounds that are currently known for arbitrary polynomials with integer coefficients and zero constant term.

A simple modification of the \emph{lifting} argument utilized in \cite{LM1} will enable us to deduce Theorem \ref{1} as a corollary of the following higher dimensional result, which was the main result in \cite{LM1} (see also \cite{LM1'}).

\begin{thm}[Lyall and Magyar \cite{LM1}, \cite{LM1'}]\label{2}
If $B\subseteq[1,N]^k$ and $(n,n^2,\dots,n^k)\notin B-B$ for any $n\ne0$ then we necessarily have
\[\frac{|B|}{N^k}\leq C\left(\frac{\log\log N}{\log N}\right)^{1/(k-1)}\]
for some absolute constant $C=C(k)$.
\end{thm}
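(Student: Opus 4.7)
The plan is a Fourier-analytic density increment argument on $\Z_N^k$, adapted to respect the structure of the moment curve $v(n)=(n,n^2,\ldots,n^k)$. Let $\delta=|B|/N^k$ and choose $N$ prime for convenience. Restrict to $n\in[1,M]$ with $M=cN^{1/k}$, a small constant times $N^{1/k}$, so that $v(n)$ does not wrap around when $B$ is embedded into $\Z_N^k$. The hypothesis then implies that
\[
\Lambda(B):=\frac{1}{M}\sum_{n=1}^{M}\sum_{x\in\Z_N^k} 1_B(x)\,1_B(x-v(n))
\]
is negligible, whereas expanding via Fourier inversion gives $\Lambda(B)=\sum_{\xi\in\Z_N^k}|\widehat{1_B}(\xi)|^2 T_M(\xi)$, with $T_M(\xi)=M^{-1}\sum_{n=1}^{M} e(\xi\cdot v(n)/N)$ the normalized Weyl sum along the moment curve. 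The $\xi=0$ term contributes $\delta^2$, so a standard pigeonholing (using Parseval $\sum_\xi|\widehat{1_B}(\xi)|^2=\delta$) forces the existence of a nonzero frequency $\xi^*$ with $|T_M(\xi^*)|\gtrsim\delta$ and $|\widehat{1_B}(\xi^*)|\gtrsim\delta$.

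Next I would invoke a Weyl/Vinogradov-type inequality for the polynomial phase $\sum_{j}(\xi_j^*/N)n^j$: the lower bound $|T_M(\xi^*)|\gtrsim\delta$ forces simultaneous rational approximation of the form $|\xi_j^*/N-a_j/q^{\,j}|\lesssim q^{-1}M^{-j}$ for some integer $q\lesssim \delta^{-C_k}$ and integers $a_1,\ldots,a_k$. The critical feature is that the scale of approximation on the $j$-th coordinate is $M^{-j}$; this is precisely what allows us to pass from the large Fourier coefficient to a density increment \emph{on a sub-box whose side lengths are geometric powers of a common step $d$}. Concretely, I would partition $\Z_N^k$ into translates of boxes of the form $(d\Z)\times(d^2\Z)\times\cdots\times(d^k\Z)$ intersected with $[1,N']^k$-type blocks, for a suitable multiple $d$ of $q$; on these boxes the character $e(\xi^*\cdot x/N)$ is essentially constant, so the standard density-increment lemma produces a box on which $B$ has density at least $\delta+c\delta^{O(1)}$.

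The decisive point is that this sub-box structure is compatible with the moment curve: if we rescale the $j$-th coordinate by $d^j$, then $v(m\cdot d)=(dm,d^2m^2,\ldots,d^km^k)$ is exactly $v$ applied to $m$ in the rescaled coordinates, so the assumption $v(n)\notin B-B$ for $n\neq 0$ transfers verbatim to the restriction of $B$ to the sub-box, now viewed as a subset of $[1,N_1]^k$ with $N_1\approx N/q^{O(1)}$. Iterate: at each step $\delta$ grows (additively by $\sim c\delta^{k}$, or geometrically by $(1+c\delta^{k-1})$), and $N$ shrinks by a polynomial-in-$\delta^{-1}$ factor. The iteration exhausts itself after $O(\delta^{-(k-1)})$ steps, which requires $\log N\gtrsim \delta^{-(k-1)}\log(\delta^{-1})$, giving the claimed bound $\delta\lesssim (\log\log N/\log N)^{1/(k-1)}$.

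The main obstacle is the interaction between the simultaneous Diophantine approximation step and the lifting: one needs Weyl/Vinogradov estimates delivering rational approximations of $\xi_j^*/N$ whose denominators match the powers $q^j$, so that choosing step sizes $(d,d^2,\ldots,d^k)$ simultaneously makes all $k$ characters close to constant and preserves the moment curve structure under rescaling. Without this geometric-progression structure of the step sizes, the rescaled curve would no longer be a moment curve and the iteration would not close. Once the Weyl-sum estimate is packaged in this curve-respecting form, the rest of the argument follows the classical Sárközy/Heath-Brown/Lucier template, with the bookkeeping giving the stated quantitative bound.
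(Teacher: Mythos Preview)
The paper does not actually prove this statement. Theorem~\ref{2} is quoted from \cite{LM1} and \cite{LM1'} and is used in the present paper only as an input: Section~\ref{liftsec} supplies a lifting argument showing that Theorem~\ref{2} implies Theorem~\ref{1}, but no proof of Theorem~\ref{2} itself appears here. There is therefore nothing in this paper to compare your proposal against.

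That said, your outline is consistent with the S\'ark\"ozy-style density-increment method that the cited references carry out, and you have correctly isolated the decisive structural point: the sub-box on which one passes to increased density must have step sizes $d,d^2,\ldots,d^k$ in the respective coordinates, so that the anisotropic rescaling $x_j\mapsto x_j/d^j$ sends the moment curve to itself and the hypothesis $(n,n^2,\ldots,n^k)\notin B-B$ is inherited by the rescaled set. This is precisely what allows the iteration to close.

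A few points would need tightening in a full write-up. First, the pigeonhole as you state it does not directly produce a single $\xi^*$ with both $|T_M(\xi^*)|\gtrsim\delta$ and $|\widehat{1_B}(\xi^*)|\gtrsim\delta$; the standard route is a major/minor-arc split, using Weyl-sum bounds to show the minor-arc contribution is $o(\delta^2)$ and then extracting the increment from the major-arc $\ell^2$-mass of $\widehat{f_B}$. Second, the Weyl inequality delivers a \emph{common} denominator $q$, namely $\|q\,\xi_j^*/N\|\ll\delta^{-C}M^{-j}$ for all $j$, rather than denominators $q^j$; your formulation is recoverable via the trivial rewriting $b_j/q=b_jq^{j-1}/q^j$, which is what one then feeds into the anisotropic box. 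Third, the passage to sub-progressions with steps $d^j$ must be carried out in $[1,N]^k\subset\Z^k$ rather than in $\Z_N^k$ (where $d\Z_N=\Z_N$ for $N$ prime); the embedding into $\Z_N^k$ is only for the Fourier step. None of these is a fatal gap, but the asserted increment of $+c\delta^k$ per step---and hence $O(\delta^{-(k-1)})$ iterations yielding the exponent $1/(k-1)$---would need to be derived rather than declared.
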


As in \cite{LM1}, we again speculate that the methodology of Balog et al. \cite{BPPS} may be applied in this higher dimensional situation to obtain far superior bounds in Theorem \ref{2} and hence also in Theorem \ref{1}.

\subsection{Proof that Theorem \ref{2} implies Theorem \ref{1}}\label{2.1}

Let $P_i(n)=c_{i1} n+\cdots+c_{ik}n^k$ for $1\leq i\leq\ell$.

Suppose that the coefficient matrix $\mathcal{P}=\{c_{ij}\}$ has rank $r$ with $1\leq r\leq \ell$. Without loss of generality we will make the additional assumption that it is in fact the first $r$ polynomials $P_1,\dots,P_{r}$ that are linearly independent and use $\mathcal{R}$ to denote the $r\times k$ matrix corresponding to the first $r$ rows of $\mathcal{P}$.
As a consequence of this assumption it follows that the remaining polynomials, $P_{r+i}$ with $1\leq i\leq \ell-r$, can be expressed as
\[P_{r+i}=d_{i1}P_1+\cdots+d_{ir} P_{r}\]
where $\mathcal{D}=\{d_{ij}\}$ is some $(\ell-r)\times r$ matrix with rational coefficients.
Note that
\begin{align*}
&\mathcal{P}:\Z^k\rightarrow\Z^\ell \\
&\mathcal{R}:\Z^k\rightarrow\Z^r \\
&\mathcal{D}:\mathcal{R}(\Z^k)\rightarrow\Z^{\ell-r}
\end{align*}
and
\[\mathcal{P}(b)=\left(\begin{matrix} \mathcal{R}(b) \\ \mathcal{D}(\mathcal{R}(b))\end{matrix}\right).\]

Let $A^\ell=A\times\cdots\times A\subseteq[1,N]^\ell$ and set $\D=|A|/N$.
The full rank assumption on the matrix $\mathcal{R}$ ensures that there exists an absolute constant $c$, depending only on the coefficients of the matrix $\mathcal{R}$, such that
\[\bigl|\mathcal{R}(\Z^k)\cap(A^r-s)\bigr|\geq c \D^r N^r\]
for some $s\in[1,c^{-1}]^r$.
Thus, if we choose $N'$ to be a large enough multiple of $N$ (again depending only the coefficients of the matrix $\mathcal{R}$) and let
\[B'=\left\{b\in[-N',N']^k\,:\,\mathcal{R}(b)\in A^r -s\right\},\]
it follows that
\[|B'|\geq c\, \D^r  N^k.\]

Since \[\sum_{t\in\Z^{\ell-r}}\sum_{b\in B'}1_{A^{\ell-r}}(\mathcal{D}(\mathcal{R}(b))+t)=|A|^{\ell-r}|B'|\]
it follows that there exists $c=c(\mathcal{P})$ and $t\in\Z^{\ell-r}$ such that
\[\left|\left\{b\in B'\,:\, \mathcal{D}(\mathcal{R}(b))\in A^{\ell-r}-t\right\}\right|\geq c\delta^{\ell-r}|B'|.\]

Hence, if we let
\[B=\left\{b\in[-N',N']^k\,:\,\mathcal{P}(b)\in A^\ell -m\right\},\]
where $m=(s,t)\in\Z^\ell$,
it follows that
\[|B|\geq c\, \D^\ell  N^k.\]

Theorem \ref{1} now follows from Theorem \ref{2} since if there were to exist an $n\ne0$ such that
\[(n,n^2,\dots,n^k)\in B-B\]
this would immediately implies that
\[(P_1(n),\dots,P_\ell (n))\in A^\ell-A^\ell,\]
since $\mathcal{P}(B)\subseteq A^\ell -m$.\qed


\begin{appendix}\label{nt1}
\section{Counting solutions to systems of polynomial diophantine equations}

The aim of this section is to prove Proposition \ref{nt}. We do this by showing that it follows easily from counting the integer solutions $1\leq x_1,\ldots,x_K,y_1,\ldots,y_K\leq M$ of the system of equations
\eq x_1^i+\cdots+x_K^i=y_1^i+\cdots +y_K^i\ee
where the exponent $i$ ranges from 1 to $k$, known as Tarry's problem.  An asymptotic formula for the number of solutions $J_{K,k}(M)$ (as $M\to\infty$) was originally obtained by Hua \cite{H}, see also Wooley \cite{W}. In particular, it follows from these results that
\eq J_{K,k}(M)\,\leq C_kM^{2K-k(k+1)/2}\ee
as long as $K>Ck^2\log\,k$. For additional discussion of Tarry's problem, see \cite{ACK} and \cite{W}.

\begin{proof}[Proof of Proposition \ref{nt}] Let $P(x)=c_k x^k+\dots +c_1 x$ be an integral polynomial. For given
\[1\leq x_1,\ldots,x_K,y_1,\ldots,y_K\leq M\]
and for $1\leq i\leq k$, let
\eq\label{5.3}
 s_i = x_1^i+\cdots +x_K^i\,-\,y_1^i-\cdots -y_K^i\ee

Then $x_1,\ldots,x_K,y_1,\ldots,y_K$ is a solution of equation (\ref{2.5}) if and only if $c_1 s_1+\cdots +c_k s_k=0$.

For given $s_1,\ldots,s_k,$ let $J_{K,k}(M;s)$ denote the number of integer solutions of the system (\ref{5.3}). Then, as usual, one can express the number of solutions as a multiple integral of the form
\[J_{K,k}(M;s)=\int_0^1\ldots\int_0^1 |S(\te_1,\ldots,\te_k)|^{2K}\,e^{-2\pi i (s_1\te_1+\cdots +s_k\te_k)}\ d\te_1\ldots d\te_k\]
where
\[S(\te_1,\ldots,\te_k) = \sum_{m=1}^M e^{2\pi i (m\te_1+\cdots+m^k\te_k)}.\]

Therefore, we have that $J_{K,k}(M;s)\leq J_{K,k}(M)$ uniformly in $s_1,\ldots,s_k$. For a solution of (\ref{2.5}) the values $s_1,\ldots,s_{k-1}$ determine $s_k$, and since $|s_i|\leq kN^i$ for $1\leq i\leq k-1$, one estimates the number of solutions of (\ref{2.5}) from above by $C_k\,M^{(k-1)k/2}M^{2K-k(k+1)/2} = C_kM^{2K-k}$.
This proves Proposition \ref{nt}.
\end{proof}


\section{Simultaneous polynomial diophantine approximation}\label{nt2}

The purpose of this section is to supply a proof of Lemma \ref{FourierMain}. We in fact establish the following more general result.

\begin{propn}\label{newlemma} Let $P_1,\ldots,P_\ell \in\Z[n]$ with $P_i(0)=0$ and $\deg P_i\leq k$ \emph{for all} $1\leq i\leq \ell$ and $\te_1,\ldots,\te_m\in\R$. Then for any $0<\VE\leq 1/2$ and $N\in\N$ we have that
\eq
|\{1\leq n\leq N\,:\,\|P_i(n)\te_{i'}\|<\eps \ \text{for all} \ 1\leq i\leq \ell,\, 1\leq i'\leq m\}|\geq (\VE/d)^{C_k d^2}\ee
where $d=k\ell m$ and $C_k>0$ is a constant depending only on $k$.
\end{propn}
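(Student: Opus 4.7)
The plan is to establish Proposition B.1 via an adaptation of the van-der-Waerden-plus-magic-identity argument from the proof of Lemma \ref{MainErgodic}, made quantitative by replacing the Ramsey-theoretic counting of monochromatic progressions with a direct Fourier / Vinogradov-type count.

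\textbf{Reduction to monomials.} Writing $P_i(n) = \sum_{j=1}^k c_{ij} n^j$, the triangle inequality shows that if $\|c_{ij}\theta_{i'} n^j\| < \eta$ holds simultaneously for all $d = k\ell m$ triples $(i, j, i')$, then $\|P_i(n)\theta_{i'}\| < k\eta$. Setting $\eta = \varepsilon/k$ reduces the problem to counting $n \in [1, N]$ satisfying these $d$ monomial approximations.

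\textbf{Coloring and the magic identity.} I partition $\T^d$ into cubes of side $\eta'$ (slightly smaller than $\eta$) and let $\chi:[1,N] \to \{\text{cubes}\}$ record which cube contains $(c_{ij}\theta_{i'} n^j \bmod 1)_{i,j,i'}$. A monochromatic $(k+1)$-term arithmetic progression $\{x, x+a, \ldots, x+ka\}$ for $\chi$ gives, for each $1 \leq j \leq k$, via the magic identity
\[
\sum_{t=0}^{j} \binom{j}{t}(-1)^{j-t}(x+ta)^j = j!\, a^j,
\]
the bound $\|c_{ij}\theta_{i'}\, j!\, a^j\| \leq 2^j \eta'$ (since $j \leq k$, the progression has at least $j+1$ elements in the same cube). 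Multiplying by $(k!)^j/j!$ and setting $n := k!\, a$, the arithmetic in the proof of Lemma \ref{MainErgodic} yields $\|c_{ij}\theta_{i'} n^j\| \leq (k!)^k 2^k \eta'$ for every $(i, j, i')$. Choosing $\eta' = \eta/[k(k!)^k 2^k]$ then delivers the required monomial bounds and hence $\|P_i(n)\theta_{i'}\| < \varepsilon$.

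\textbf{Counting and the main obstacle.} The crux of the argument is to show that the coloring $\chi$, with its $\lesssim (\eta')^{-d}$ colors, admits at least $(\varepsilon/d)^{C_k d^2}\,N$ distinct common differences $a$ giving rise to a monochromatic $(k+1)$-AP. The main obstacle is that a naive application of Szemer\'edi-Varnavides would give only tower-type lower bounds in $\eta'$, far weaker than the claimed polynomial-type bound. To overcome this one must exploit that $\chi$ is not arbitrary but is induced by a polynomial orbit on $\T^d$: passing to the Fourier side, the count of monochromatic APs reduces to a weighted average of exponential sums of the form $\sum_{n=1}^N e\bigl(\sum_{i,j,i'} \xi_{iji'}\, c_{ij}\theta_{i'}\, n^j\bigr)$, which by Vinogradov's mean value theorem (as foreshadowed by the paper's closing remark on ``explicit Vinogradov type bounds for the Weyl sums'') can be bounded to yield the desired polynomial-in-$\varepsilon$ count. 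Tracking the constants through the $\sim d$-fold pigeonhole loss and Vinogradov's method produces the stated bound $(\varepsilon/d)^{C_k d^2}\,N$ and is consistent with the constant $C_{k,\ell} = C k^C \ell^C$ quoted in the final remark of Section \ref{fousec}.
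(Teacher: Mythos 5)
Your reduction to monomials is exactly the paper's first step (Proposition \ref{newlemma} is deduced from Proposition \ref{vector} by applying it to the vectors $\al_j=(c_{ij}\te_{i'})_{i,i'}$ with $\VE/k$ in place of $\VE$), and your van der Waerden/magic-identity scaffolding is a sound \emph{qualitative} argument --- it is essentially the proof of Lemma \ref{MainErgodic}. The gap is in the step you yourself flag as the crux: you never prove that the coloring $\chi$ admits at least $(\VE/d)^{C_kd^2}N$ distinct common differences of monochromatic $(k+1)$-term APs. The appeal to ``passing to the Fourier side'' plus Vinogradov's mean value theorem does not close this. Expanding the cube indicators in Fourier series and invoking Weyl/Vinogradov bounds only yields a dichotomy: either the relevant exponential sums $\sum_{n\leq N} e(\xi\cdot(n\circ\al))$ are small for all nonzero frequencies in range, in which case the orbit equidistributes and you get $\gtrsim\VE^dN$ good $n$ directly (no APs needed at all), or some sum is large, in which case Weyl's inequality only tells you that the coefficients $c_{ij}\te_{i'}$ are close to rationals with small denominator --- and then you must pass to the corresponding sub-torus/sublattice and iterate. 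No single application of Vinogradov converts the structured case into a lower bound; the iteration, with its multiplicative loss of $d^{-C}(4A_\La)^{-C_k}$ at each of up to $d$ steps, is precisely what produces the exponent $C_kd^2$, and it is the entire content of the paper's Appendix B (Schmidt's alternative, the descent lemma, and Propositions \ref{induct} and \ref{lower bound}, following Green and Tao).

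A second, structural point: once you commit to the Fourier/Weyl route, the monochromatic-AP count is the wrong quantity to estimate. The paper abandons the coloring entirely and bounds the smoothed return count $F_{\La,\al}(N)$ built from the theta function $\Te_\La(1,n\circ\al)$, which replaces the cube indicator by a Gaussian whose Fourier expansion has positive, rapidly decaying coefficients; the lower bound then comes from the descent. Your hybrid --- keep the vdW/AP framework but count APs by Fourier analysis --- faces the additional obstruction that the AP count involves a product of $k+1$ cube indicators evaluated along the polynomial orbit, whose Fourier expansion has no sign and poor decay, and you offer no mechanism for extracting a lower bound from it. As written, the proposal establishes the qualitative statement but not the quantitative one actually claimed.
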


It is easy to see that Proposition \ref{newlemma} is an immediate corollary of the following result.

\begin{propn}\label{vector} Let $d_1,\ldots,d_k\in\N$ and $\al_j\in\R^{d_j}$ for $1\leq j\leq k$. There exists $C_k>0$ such that for any $0<\VE\leq 1/2$ and $N\in \N$ we have
\eq\label{36}
 |\{1\leq n\leq N\,:\, \|n^j\al_j\|<\eps \ \text{for all} \ 1\leq j\leq k\}|\geq (\VE/d)^{C_k d^2}\ee
where $d=d_1+\cdots +d_k$ and $\|\al_j\|$ denotes, for all $\A_j\in\R^{d_j}$, the distance from $\A_j$ to the nearest integer point in $\Z_{d_j}$.
\end{propn}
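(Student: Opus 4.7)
My plan is to proceed by induction on the maximum degree $k$, combining multi-dimensional Dirichlet pigeonhole with the ``magic identity'' (\ref{magic}) already exploited in the proof of Lemma \ref{MainErgodic}.

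The base case $k=1$ reduces to standard pigeonhole. Partitioning $\T^d$ into $\lceil 3/\eps\rceil^d$ cubes of side $\eps/3$, some cube must contain at least $N(\eps/3)^d$ of the points $n\al_1\in\T^d$ for $n\in[1,N]$; taking pairwise differences within this cube and double-counting (since each $h\in[1,N]$ arises as such a difference in at most $N$ ways) produces at least $N(\eps/3)^{2d}/2$ distinct $h\in[1,N]$ with $\|h\al_1\|<\eps$, which is considerably stronger than the required $(\eps/d)^{C_1 d^2}N$ for $C_1$ chosen sufficiently large.

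For the inductive step, fix $\eta:=\eps/(2^k(k!)^{k-1})$ so that $\|j!h^j\al_j\|\leq 2^j\eta$ for all $j\leq k$ forces $\|(k!h)^j\al_j\|<\eps$. Apply pigeonhole to the map $\Phi(n):=(n\al_1,\ldots,n^k\al_k)\in\T^d$ with cubes of side $\eta$, producing a set $T\subseteq[1,N]$ with $|T|\geq N(\eta/2)^d$ whose images all lie in a common cube. For any $(k+1)$-term arithmetic progression $\{n_0+th\}_{t=0}^k\subseteq T$, applying (\ref{magic}) componentwise (and using $\sum_{t=0}^{j}(-1)^{j-t}\binom{j}{t}=0$ for $j\geq 1$ to cancel the cube centers) gives $\|j!h^j\al_j\|\leq 2^j\eta$ for each $j=1,\ldots,k$, so $n=k!h$ satisfies all $k$ target constraints simultaneously. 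Since distinct common differences $h$ produce distinct values of $n=k!h$, the problem reduces to counting the number of distinct $h\in[1,N/k]$ for which $T$ contains a $(k+1)$-AP with common difference $h$.

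The main obstacle is this final counting step, as applying Szemer\'edi's theorem to the set $T$ would be far too inefficient. Instead, I plan to exploit the explicit polynomial Bohr-set structure of $T$. For each fixed $h$, the condition ``$\{n_0+th\}_{t=0}^{k}\subseteq T$'' imposes, for each $j=1,\ldots,k$ and $t=1,\ldots,k$, a polynomial-in-$n_0$ constraint of degree at most $k-1$ whose vector coefficients depend polynomially on $h$; this places us precisely in the setting of the inductive hypothesis in degree $k-1$ and total dimension $O(kd)$. A double counting over $h$, combining the outer pigeonhole lower bound $|T|\geq N(\eta/2)^d$ with the inductive count for each good $h$, yields the claimed bound $(\eps/d)^{C_k d^2}N$. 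The quadratic exponent $d^2$ (rather than $d^k$) reflects the fact that the inductive hypothesis is invoked only once, in dimension proportional to $d$, rather than $k$ times in successively growing dimensions.
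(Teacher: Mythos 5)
Your outer skeleton (pigeonhole $\Phi(n)=(n\al_1,\dots,n^k\al_k)$ into cubes of side $\eta$, then apply the identity (\ref{magic}) to a $(k+1)$-term progression of cube-mates to conclude $\|(k!h)^j\al_j\|<\eps$) is sound, and it is exactly the van der Waerden--style argument the paper uses for Lemma \ref{MainErgodic}; the base case is also fine. The gap is precisely where you place it, in the final counting step, and the proposed repair does not work. For fixed $h$, the condition that $n_0+th$ and $n_0$ land in (essentially) the same cube is $\|\bigl((n_0+th)^j-n_0^j\bigr)\al_j\|<\eta$, and while $(n_0+th)^j-n_0^j$ does have degree $j-1\le k-1$ in $n_0$, it has the nonzero constant term $(th)^j$. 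The inductive hypothesis (and Proposition \ref{newlemma}) is a \emph{recurrence} statement: it applies only to polynomials vanishing at $0$, and it is simply false for inhomogeneous targets (e.g.\ $\|n\cdot 0+\tfrac12\|<\tfrac14$ has no solutions). To absorb the constant terms you would need $\|t^jh^j\al_j\|$ to be small for all $j$, i.e.\ essentially the conclusion of the proposition for $n=h$ --- the argument is circular. A second, related obstruction: even granting control of the degree-$(k-1)$ conditions in $n_0$, you need the resulting Bohr set to intersect $T$ itself, which is cut out by the degree-$k$ conditions $\|n_0^j\al_j\in Q_j\|$; lower-bounding that intersection is again the original degree-$k$ problem. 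The cross terms $n_0^s h^{j-s}$ genuinely couple the two variables, and this bilinear difficulty is exactly why no pigeonhole-plus-induction argument with polynomial bounds is known.

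This is also why the paper takes a completely different route: it follows Green and Tao's Appendix A, encoding the count as the theta-function average $F_{\La,\al}(N)$, using Weyl's exponential-sum inequality (``Schmidt's alternative,'' Lemma \ref{S}) to extract rational structure when the average is small, and then performing a geometry-of-numbers descent (Lemma \ref{D}) that drops the total dimension by one at each step; iterating $d$ times yields the exponent $C_kd^2$. If you insist on an elementary AP-counting route, the only available tools are the averaged van der Waerden or Szemer\'edi theorems, whose dependence on the number of colors $\eta^{-d}$ is of tower (or worse) type, far weaker than the polynomial bound $(\eps/d)^{C_kd^2}$ claimed in the proposition.
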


\begin{proof}[Proof of Proposition \ref{newlemma}]
Let $P_i(n)=\sum_{j=1}^k c_{ij} n^j$ for $1\leq i\leq k$. For any given $1\leq i\leq \ell$ and $1\leq i'\leq m$ we of course have $\|P_i(n)\te_{i'}\|<\VE$ whenever $\|n^j c_{ij}\te_{i'}\|<\VE/k$ for all $1\leq j\leq k$.
Thus, if we apply Proposition \ref{vector} to the vectors $\A_1,\dots\A_k\in\R^{\ell m}$ where $\al_j= (c_{ij}\te_{i'})_{1\leq i\leq \ell,\,1\leq i'\leq m}$ for  $1\leq j\leq k$, then Proposition \ref{newlemma} follows.
\end{proof}

We are therefore reduced to the task of proving Proposition \ref{vector}.
The special case $k=2$ and $\al_1=0$ is precisely Proposition A.2. in \cite{GT}. In fact, their argument generalizes to our case in a straightforward manner and as such we will sketch only the main steps and refer to the proofs in \cite{GT}.

\subsection{The proof of Proposition \ref{vector}}

Let $\La=\La_1\times\cdots\times\La_k$ where each $\La_i\subs\R^{d_i}$ is a full rank lattice. Recall, from \cite{GT}, that the theta function
\eq\label{theta}
 \Te_{\La}(t,x):=\sum_{m\in\La} e^{-\pi t|x-m|^2} = \frac{1}{t^{D/2} \det (\La)} \sum_{\xi\in\La^*} e^{-\pi |\xi|^2/t}\,e(\xi\cdot x)\ee
where $\La^*:=\{\xi\in\R^d\,:\, \xi\cdot m\in\Z \ \text{for all} \ m\in\La\}$ is the dual lattice of $\La$, from which it follows that
\eq\label{F}
 F_{\La,\al}(N):= \det (\La)\frac{1}{N}\sum_{n=1}^N\Te_\La (1,n\circ\al)=\sum_{\xi\in\La^*} e^{-\pi |\xi|^2}\frac{1}{N}\sum_{n=1}^N e(\xi\cdot (n\circ\al))\ee
where $\al=(\al_1,\ldots,\al_k)$ and for each $n\in\N$ we define $n\circ \al := (n\al_1,n^2\al_2,\ldots,n^k\al_k)$.
As in \cite{GT} we also define the quantity
\eq A_\La := \det(\La)\ \sum_{m\in\La} e^{-\pi |m|^2} = \sum_{\xi\in\La^*} e^{-\pi |\xi|^2}.\ee

The crucial ingredient in the proof of Proposition \ref{vector} is the following lower bound  on $F_{\La,\al}(N)$, whose proof we outline in Section \ref{proofoflower}.

\begin{propn}\label{lower bound} Let $\La=\La_1\times\cdots\times\La_k$ where $\La_i\subs\R^{d_i}$ ($d_i\geq 0$) is a full rank lattice, such that $\det (\La)\geq 1$. Then for all $\al=(\al_1,\ldots,\al_k)$, with $\al_i\in\R^{d_i}$ and $N\in\N$ one has
\eq F_{\La,\al}(N) \geq (Cd)^{-C_kd^2} A_\La^{-C_k d}\ee
where $d=d_1+\cdots +d_k\geq 1$.
\end{propn}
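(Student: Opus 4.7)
The plan is to prove Proposition \ref{lower bound} by induction on the degree $k$, with the real-space positivity of the theta function as the central tool. Applying (\ref{theta}) at $t=1$ yields the non-negative representation
\[F_{\La,\al}(N)=\det(\La)\,\frac{1}{N}\sum_{n=1}^N \Te_\La(1,n\circ\al),\]
whose summands satisfy $\det(\La)\,\Te_\La(1,x)\geq e^{-\pi\,\mathrm{dist}(x,\La)^2}$ under the hypothesis $\det(\La)\geq 1$. The task therefore reduces to exhibiting a set of $n\in[1,N]$ of density at least $(Cd)^{-C_k d^2}A_\La^{-C_k d}$ at which $n\circ\al$ lies within Euclidean distance $O(\sqrt{d})$ of $\La$. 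For the base case $k=1$, this is a quantitative form of the simultaneous Dirichlet approximation theorem for $\al_1$ relative to $\La_1$: working in coordinates with respect to a Minkowski-reduced basis of $\La_1^*$ and pigeonholing, one produces a subgroup of $[1,N]$ of the required density on which $n\al_1$ is close to $\La_1$, the dependence on $A_{\La_1}$ emerging from geometry-of-numbers considerations on $\La_1^*$.

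For the inductive step, suppose the proposition for all degrees $\leq k-1$ and consider $\al=(\al_1,\ldots,\al_k)$. First apply the $k=1$ argument to the single vector $\al_k$ to select a step $h\sim N^{1/k}$ for which $h^k\al_k$ lies within $O(\sqrt{d_k})$ of $\La_k$. Restricting $n$ to the arithmetic progression $n=n_0+mh$ with $1\leq m\leq M\sim N/h$ and expanding each $(n_0+mh)^j\al_j$ binomially, one sees that the $m^k$-coefficient of the $k$-th coordinate, namely $h^k\al_k$, is now absorbed into $\La_k$ up to a small controlled error. Hence, modulo $\La$, the map $m\mapsto(n_0+mh)\circ\al$ becomes effectively a polynomial orbit of degree $\leq k-1$ in $m$, associated to a new lattice-vector configuration $(\widetilde\La,\widetilde\al)$ assembled from $\al_1,\ldots,\al_k$, $n_0$ and $h$. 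The inductive hypothesis applied to this configuration then supplies the required density of good values of $m$, which pull back to good values of $n$.

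The principal obstacle is the careful tracking of parameters through the $k$-fold iteration. Each layer slightly enlarges the effective dimension $\widetilde d$ (new lower-degree terms appear from the binomial expansion), shrinks the range of the induction variable by the factor $h$, and modifies the geometric invariants $\det(\widetilde\La)$ and $A_{\widetilde\La}$ in ways that must be controlled through further theta-function identities. The technical heart of the proof is to verify that the cumulative losses through all $k$ iterations still close the induction with the claimed double-power bound $(Cd)^{-C_k d^2}A_\La^{-C_k d}$; in particular, one must ensure that the $A_\La$-dependence deteriorates only by a factor linear in $d$ per layer, so that the final exponent remains $\leq C_k d$.
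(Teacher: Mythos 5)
There is a genuine gap in your inductive step, and it is fatal to the quantitative claim. Suppose you have chosen $h$ so that $h^k\al_k=\lambda+e$ with $\lambda\in\La_k$ and $|e|=O(\sqrt{d_k})$ (or even $|e|\le\eps$). Restricting to $n=n_0+mh$, the top term of the $k$-th coordinate is $m^kh^k\al_k=m^k\lambda+m^ke$; the lattice point $m^k\lambda$ is indeed absorbed into $\La_k$, but the error becomes $m^ke$ with $m$ ranging up to $M\sim N/h$. So the distance from $m^kh^k\al_k$ to $\La_k$ can be as large as $(N/h)^k|e|$ --- not a ``small controlled error.'' To genuinely kill the degree-$k$ term you would need $|e|\lesssim\eps\,(h/N)^k$, and the density of such $h$ in $[1,H]$ is about $(\eps(h/N)^k)^{d_k}$, which tends to $0$ with $N$ unless $h$ is comparable to $N$ --- in which case $M\sim1$ and there is no range left for the induction. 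This is precisely why elementary progression-restriction arguments for polynomial recurrence (compare the van der Waerden/colour-focusing proof of Lemma \ref{MainErgodic} in Section 2) either invoke van der Waerden's theorem, with constants far worse than the polynomial shape $(\eps/d)^{C_kd^2}$ required here, or do not close at all. A secondary issue you acknowledge but underestimate: the binomial cross terms $\binom{j}{t}n_0^{j-t}h^t\al_j$ produce a new configuration whose dimension grows multiplicatively at each of the $k$ layers, and your base case for a general lattice $\La_1$ (with $\det(\La_1)$ possibly small even though $\det(\La)\ge1$) is also not routine.

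The paper's proof avoids degree induction entirely and instead descends on the total dimension $d$. By Schmidt's alternative (Lemma \ref{S}, i.e.\ Weyl's inequality applied to the full degree-$k$ system at once), either $F_{\La,\al}(N)\ge1/2$ already, or some primitive dual vector $\xi_i\in\La_i^*$ of controlled length satisfies $\|q\xi_i\cdot\al_i\|\le(4A_\La)^{C_k}N^{-i}$ for some small $q$; one then projects out the direction $\xi_i$ via the descent lemma (Lemma \ref{D}), losing only a multiplicative factor $d^{-C}(4A_\La)^{-C_k}$ in $F$ and a factor $C(\sqrt d+\sqrt{\log A_\La})$ in $A_\La$. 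After at most $d$ such steps the lattice is $\{0\}$, where $F=1$, and unwinding the iteration gives the stated bound. To salvage an argument along your lines you would need to replace the progression restriction by some mechanism that does not amplify the approximation error by $m^k$; the dual-lattice descent is exactly such a mechanism.
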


Note that in the statement we allow the degenerate case $d_i=0$, when $\La_i=\R^{d_i}=\{0\}$.

Assuming Proposition \ref{lower bound} we can now establish Proposition \ref{vector} as in \cite{GT}.

\begin{proof}[Proof of Proposition \ref{vector}]
 For simplicity of notation we set $c_d:=(Cd)^{-C_kd^2}$.  Let $\VE>0$ and $\La:= (R\Z)^d$ where $R:=C\,C_k d^2\VE^{-2}$ (with a suitable large constant $C$). Note that
\eq\label{41}
 A_\La = R^d \sum_{m\in\Z^d} e^{-\pi R^2 |m|^2} \leq (10R)^d.\ee
If $\|n\circ\al\|\geq\VE$ that is $|n\circ\al-m|\geq\eps$ for all $m\in\Z^d$, then for $\be=R\al$ one has that \[|n\circ\be-Rm|^2\geq R^2\VE^2/2+|n\circ\be-Rm|^2/2\] for all $m\in\Z^d$. Thus by (\ref{theta}) and the choice of $R$ it follows that
\begin{align*}
\det (\La) \Te_{\La}(1,n\circ\be)&\leq e^{-\pi R^2\VE^2/2} \det (\La) \Te_{\La}(1/2,n\circ\be)\\
&\leq e^{-\pi R^2\VE^2/2}  2^{d/2} \sum_{\xi\in\La^*} e^{-\pi |\xi|^2}\\
&\leq \frac{c_d}{2} A_\La^{-C_k d}.
\end{align*}
If one defines the set $G:= \{n\in [1,N]\,:\, \|n\circ\al\|\leq\VE\}$, then by (\ref{F}) and (\ref{41}) it follows that
\[\frac{1}{N} \sum_{n\in G} \det (\La) \Te_{\La}(1,n\circ \be) \geq \frac{c_d}{2}\ A_\La^{-C_k d}.\]
Also
\[\det (\La)\Te_{\La}(1,n\circ \be)\leq A_\La\leq (C_k d^2\VE^{-2})^d\]
for all $n$. This implies (\ref{36}) since
\[\frac{|G|}{N} \geq \frac{c_d}{2}\ A_\La^{-C_d-1} \geq (Cd)^{-C_kd^2} (C_k d^2\VE^{-2})^{-C_k d^2} \geq (\VE/d)^{C_k' d^2}.\qedhere\]
\end{proof}

\subsection{Three Lemmas}
In order to prove Proposition \ref{lower bound} we need the the following three lemmas, which we present without proof. These lemmas correspond to Lemmas A.5, A.6 and A.7 in \cite{GT} and are proven in essentially the same way. We leave this for the interested reader to verify.

\begin{lem}[Properties of $F_{\La,\A}$]\label{P} Let $\La=\La_1\times\cdots\times\La_k$, where each $\La_i\subs\R^{d_i}$ is a full rank lattice.
Let $\al=(\al_1,\ldots,\al_k)$, with $\al_i\in\R^{d_i}$ and $N>20$.
\begin{itemize}
\item[(i)] For any $c\in (\frac{1}{10},1)$, we have $\ F_{\La,\al}(N)\geq \frac{c}{2} F_{\La,\al}(cN)$.
\item[(ii)] For any integer $1\leq q\leq \frac{N}{2}$, we have $\ F_{\La,\al}(N)\geq \frac{1}{2q} F_{\La,\al}(\frac{N}{q})$.
\item[(iii)] Let $0<\eps\leq\frac{1}{d}$. If $\be=(\be_1,\ldots,\be_k)$ such that $|\be_i-\al_i|\leq \eps N^{-i}$ for $1\leq i\leq k$, then for all $1\leq n\leq N$ we have
\[\Te_\La (1,n\circ\al)\geq c^k\,\Te_{(1+\eps)\La} (1,n\circ (1+\eps)\be)\]
and hence
\[F_{\La,\al}(N)\geq c^k\,F_{(1+\eps)\La,(1+\eps)\be}(N).\]
\end{itemize}
\end{lem}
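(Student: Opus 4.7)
The plan is to treat parts (i) and (ii) as immediate consequences of the pointwise non-negativity of $\Te_\La(1,\cdot)$ (being a sum of Gaussians), and to concentrate the real work on part (iii), which is a perturbation estimate for shifted Gaussian sums.

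For (i) I would simply discard the tail of the average defining $F_{\La,\al}(N)$: restricting to $1\leq n\leq \lfloor cN\rfloor$ gives
\[
F_{\La,\al}(N)\geq\frac{\lfloor cN\rfloor}{N}\,F_{\La,\al}(\lfloor cN\rfloor),
\]
and for $c>1/10$ and $N>20$ one has $\lfloor cN\rfloor\geq cN-1\geq cN/2$, yielding the claimed factor $c/2$. Part (ii) is identical in spirit: restricting to $1\leq n\leq \lfloor N/q\rfloor$ produces a factor $\lfloor N/q\rfloor/N\geq 1/(2q)$ whenever $q\leq N/2$.

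For (iii), the key move is to parametrize the dilated lattice as $(1+\eps)\La=\{(1+\eps)m:m\in\La\}$, which recasts both theta sums over the \emph{same} index set:
\[
\Te_\La(1,n\circ\al)=\sum_{m\in\La}e^{-\pi|n\circ\al-m|^2},\qquad \Te_{(1+\eps)\La}(1,n\circ(1+\eps)\be)=\sum_{m\in\La}e^{-\pi(1+\eps)^2|n\circ\be-m|^2}.
\]
A term-by-term comparison then reduces matters to proving $|n\circ\al-m|^2\leq(1+\eps)^2|n\circ\be-m|^2+O(k\eps)$. Writing $n\circ\al-m=(n\circ\be-m)+n\circ(\al-\be)$, the hypothesis $|\al_i-\be_i|\leq \eps N^{-i}$ together with $1\leq n\leq N$ forces $|n\circ(\al-\be)|^2=\sum_{i=1}^k n^{2i}|\al_i-\be_i|^2\leq k\eps^2$. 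Applying the elementary AM-GM inequality $(a+b)^2\leq(1+\eps)a^2+(1+\eps^{-1})b^2$ with $a=|n\circ\be-m|$ and $b\leq\sqrt{k}\,\eps$ delivers exactly the desired bound. Exponentiating and summing then yields the term-by-term comparison $\Te_\La(1,n\circ\al)\geq c^k\,\Te_{(1+\eps)\La}(1,n\circ(1+\eps)\be)$, and averaging over $n$ and absorbing the lattice-volume discrepancy $\det((1+\eps)\La)/\det(\La)=(1+\eps)^d\leq e$ converts this into the claimed inequality for $F_{\La,\al}$.

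The only real (though modest) obstacle is the bookkeeping in (iii): one must ensure that neither the accumulated Gaussian-exponent error $k\eps(1+\eps)$ nor the volume factor $(1+\eps)^d$ degrades the final constant beyond some $c^k$ with $c$ absolute. Both are controlled by the single hypothesis $\eps\leq 1/d$, which is the only place in the entire lemma where that hypothesis is used in an essential way.
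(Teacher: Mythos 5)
Your argument is correct and is essentially the proof the paper has in mind: the lemma is stated without proof as a direct analogue of Lemma A.5 of Green and Tao \cite{GT}, and your treatment of (i) by positivity of the Gaussian sum, and of (iii) by the term-by-term comparison $|n\circ\al-m|^2\le(1+\eps)|n\circ\be-m|^2+(1+\eps^{-1})|n\circ(\al-\be)|^2\le(1+\eps)^2|n\circ\be-m|^2+2k$ followed by absorbing the volume factor $(1+\eps)^d\le e$, is exactly the standard route. The hypothesis $\eps\le 1/d$ is indeed used only to control those last two quantities, as you say.

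One caveat on part (ii). As literally printed, $F_{\La,\al}(N)\geq\frac{1}{2q}F_{\La,\al}(N/q)$ is the triviality you prove by truncating to an initial segment. But this is almost certainly a typo: the inequality actually invoked in the proof of Proposition \ref{induct} is $F_{\La,\al}(N_*)\geq\frac{1}{2q}F_{\La,q\circ\al}(N_*/q)$, with the frequency vector rescaled to $q\circ\al=(q\al_1,q^2\al_2,\dots,q^k\al_k)$, and that is what Lemma A.5(ii) of \cite{GT} asserts. Your initial-segment truncation does not yield this version; instead one restricts the average to the multiples $n=qn'$ of $q$, observes that $(qn')\circ\al=n'\circ(q\circ\al)$, and uses that $[1,N]$ contains $\lfloor N/q\rfloor\geq N/2q$ such multiples. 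So if your write-up is meant to support the rest of the paper, part (ii) should be proved in that form; the change costs nothing, since the count of multiples supplies the same factor $\frac{1}{2q}$.
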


\begin{lem}[Schmidt's alternative]\label{S} Let $\La=\La_1\times\cdots\times\La_k$, where each $\La_i\subs\R^{d_i}$ is a full rank lattice and let $N> (4A_\La)^{C_k}$. One of the following two alternatives holds:
\begin{itemize}
\item[(i)] $\ F_{\La,\al}(N)\geq 1/2;$
\item[(ii)] There is a positive integer $q \leq d (4A_\La)^{C_k}$, and a primitive $\xi_i\in\La_i^*\backslash\{0\}$, such that
\eq |\xi_i|\leq C(\sqrt{d}+\sqrt{\log A_\La})\ee
and
\eq \|q\xi_i\cdot\al_i\|_{\R/\Z} \leq(4A_\La)^{C_k} N^{-i}\ee
\end{itemize}
\end{lem}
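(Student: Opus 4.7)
The plan is to exploit the Fourier expansion (B.3), namely
\[
F_{\Lambda,\alpha}(N)=\sum_{\xi\in\Lambda^*} e^{-\pi|\xi|^2}\,W(\xi),\qquad W(\xi):=\frac{1}{N}\sum_{n=1}^N e(\xi\cdot(n\circ\alpha)),
\]
and separate off the $\xi=0$ contribution, which equals $1$. If alternative (i) fails, i.e.\ $F_{\Lambda,\alpha}(N)<1/2$, then
\[
\Bigl|\sum_{0\neq\xi\in\Lambda^*} e^{-\pi|\xi|^2}W(\xi)\Bigr|\geq \frac{1}{2}.
\]
The first task is then to truncate the Fourier side. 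Using the theta transformation (B.1) at $t=2$ one has $\sum_{\xi\in\Lambda^*} e^{-\pi|\xi|^2/2}\leq 2^{d/2}A_\Lambda$, and consequently
\[
\sum_{|\xi|>R} e^{-\pi|\xi|^2}\leq e^{-\pi R^2/2}\sum_{\xi\in\Lambda^*} e^{-\pi|\xi|^2/2}\leq 2^{d/2}A_\Lambda e^{-\pi R^2/2}.
\]
Choosing $R=C(\sqrt d+\sqrt{\log A_\Lambda})$ with $C$ large enough makes this tail at most $1/4$, so
\[
\Bigl|\sum_{0<|\xi|\leq R} e^{-\pi|\xi|^2}W(\xi)\Bigr|\geq \frac{1}{4}.
\]
Since $|W(\xi)|\leq 1$, a pigeonhole on the inner sum (using $\sum_{\xi\neq0}e^{-\pi|\xi|^2}\leq A_\Lambda$) yields a vector $\xi^\circ=(\xi_1^\circ,\ldots,\xi_k^\circ)\in\Lambda^*\setminus\{0\}$ with $|\xi^\circ|\leq R$ and $|W(\xi^\circ)|\geq 1/(4A_\Lambda)$, which already gives the required bound $|\xi_i^\circ|\leq C(\sqrt d+\sqrt{\log A_\Lambda})$ coordinate-wise.

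The next step is to exploit this single large Fourier coefficient. Because $\xi^\circ\cdot(n\circ\alpha)=\sum_{i=1}^k(\xi_i^\circ\cdot\alpha_i)n^i$, the quantity $W(\xi^\circ)$ is a one-variable Weyl sum of degree at most $k$:
\[
\Bigl|\frac{1}{N}\sum_{n=1}^N e\bigl(\textstyle\sum_{i=1}^k (\xi_i^\circ\cdot\alpha_i)n^i\bigr)\Bigr|\geq \frac{1}{4A_\Lambda}.
\]
Applying the standard quantitative Weyl inequality (Schmidt's theorem on polynomial exponential sums) with $\delta=1/(4A_\Lambda)$ produces a common denominator $q\leq(4A_\Lambda)^{C_k}$ such that $\|q(\xi_i^\circ\cdot\alpha_i)\|_{\R/\Z}\leq (4A_\Lambda)^{C_k}N^{-i}$ for all $i$. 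The hypothesis $N>(4A_\Lambda)^{C_k}$ is precisely what makes this conclusion a genuine Diophantine statement rather than a vacuous one.

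Finally, to arrive at the stated form one must replace $\xi_i^\circ$ by primitive vectors. If $\xi_i^\circ=r_i\xi_i$ with $\xi_i\in\Lambda_i^*$ primitive, redefine $q':=q\cdot\mathrm{lcm}(r_1,\ldots,r_k)$, which is absorbed into the allowed factor of $d$ in the bound $q\leq d(4A_\Lambda)^{C_k}$ (using $r_i\leq |\xi_i^\circ|$ and the bound on $R$). Coordinates $i$ for which $\xi_i^\circ=0$ are harmless: one restricts to the (possibly lower-dimensional) subproduct of lattices where $\xi_i^\circ\neq 0$, for which the statement is now satisfied. The main obstacle in executing this outline cleanly is step three, namely obtaining the Weyl-type rational approximation with constants depending only on $k$ and uniform in the ambient dimension $d$; this is precisely the point where the exponent $C_k$ in the conclusion is determined, and it mirrors how Proposition A.2 of \cite{GT} handles the case $k=2$, so the argument there generalizes in a routine fashion to general $k$.
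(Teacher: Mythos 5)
Your outline is essentially the argument the paper intends: the paper does not write out a proof of Lemma \ref{S}, but defers to Lemma A.6 of \cite{GT} and isolates the degree-$k$ Weyl inequality as the only new ingredient, and your steps (theta expansion, separating $\xi=0$, Gaussian tail truncation at radius $C(\sqrt d+\sqrt{\log A_\La})$, pigeonholing a frequency with $|W(\xi^\circ)|\geq 1/(4A_\La)$, then Weyl with $\de=1/(4A_\La)$) reproduce that argument. Two small remarks. First, the conclusion (ii) is only needed for a \emph{single} index $i$ with $\xi_i^\circ\neq 0$ (this is how it is invoked in Proposition \ref{induct}), so your discussion of the coordinates with $\xi_i^\circ=0$ and of an lcm over all $k$ indices is unnecessary; taking the lcm would in fact cost a factor like $R^k$ rather than the single factor absorbed into $d(4A_\La)^{C_k}$. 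Second, in the primitivity step the inequality $r_i\leq|\xi_i^\circ|$ is not automatic, since the shortest nonzero vector of $\La_i^*$ need not have length at least $1$; the standard repair is to note that $\xi_i,2\xi_i,\dots,r_i\xi_i$ all lie in $\La_i^*\cap B(0,|\xi_i^\circ|)$ and to bound that count using $A_{\La_i}$, which still yields a bound on $q r_i$ of the admissible shape after adjusting $C_k$.
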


Recall that $\xi\in\La_i^*$ is \emph{primitive} if $\xi/n\notin\La_i^*$ for any integer $n\geq 2$.

\begin{lem}[Descent]\label{D} Suppose that $\La'\subseteq\R^{d-1}$ and $\La\subseteq\R^d$ are full rank lattices with $\La'\subs\La$, where $R^{d-1}$ is regarded as a subset of $\R^d$. Suppose that $\al'\in\R^{d-1}$, that $\al\in\R^d$ and that $\al-\al'\in\La$. Then
\eq F_{\La,\al}(N)\,\geq\,\frac{\det (\La)}{\det (\La')}\,F_{\La',\al'}(N)\ee
\end{lem}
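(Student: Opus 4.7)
The plan is to reduce the lemma to the pointwise inequality
\[
\Te_\La(1, n \circ \al) \;\geq\; \Te_{\La'}(1, n \circ \al')
\]
valid for each $1 \leq n \leq N$. Once this is established, summing over $n$ and multiplying by $\det(\La)/N$ gives
\[
F_{\La,\al}(N) = \det(\La)\,\frac{1}{N}\sum_{n=1}^N \Te_\La(1,n\circ\al) \;\geq\; \det(\La)\,\frac{1}{N}\sum_{n=1}^N \Te_{\La'}(1,n\circ\al') \;=\; \frac{\det(\La)}{\det(\La')}\,F_{\La',\al'}(N),
\]
which is precisely the claim.

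To establish the pointwise bound I would proceed in two steps. First, by the $\La$-periodicity of the map $x \mapsto \Te_\La(1,x) = \sum_{m\in\La} e^{-\pi|x-m|^2}$ (a bijective reindexing $m \mapsto m - w$ for any fixed $w\in\La$), we have $\Te_\La(1,y) = \Te_\La(1,y')$ whenever $y - y' \in \La$. Applying this with $y = n\circ\al$ and $y' = n\circ\al'$ requires that $n\circ\al - n\circ\al' \in \La$. In the product-structure setup $\La = \La_1 \times \cdots \times \La_k$ in which $F$ is defined, the hypothesis $\al - \al' \in \La$ means that the $i$-th component of $\al - \al'$ lies in $\La_i$ for every $i$, and scaling the $i$-th component by the integer $n^i$ keeps it in $\La_i$ since each $\La_i$ is a $\Z$-module. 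Hence $n\circ\al - n\circ\al' \in \La$ and therefore $\Te_\La(1,n\circ\al) = \Te_\La(1,n\circ\al')$.

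Second, by monotonicity of the theta function under lattice enlargement: regarding $n\circ\al' \in \R^{d-1}$ as an element of $\R^d$ (by padding with $0$ in the direction along which $\R^{d-1}$ is embedded in $\R^d$) and using $\La' \subseteq \La$, the series defining $\Te_{\La'}(1,n\circ\al')$ is a subseries of that defining $\Te_\La(1,n\circ\al')$,
\[
\Te_\La(1,n\circ\al') = \sum_{m\in\La} e^{-\pi|n\circ\al' - m|^2} \;\geq\; \sum_{m'\in\La'} e^{-\pi|n\circ\al' - m'|^2} = \Te_{\La'}(1,n\circ\al'),
\]
all summands being positive. Chaining the two observations yields the desired pointwise bound.

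There is essentially no obstacle in this argument; the only delicate point is the bookkeeping step ensuring that the hypothesis $\al - \al' \in \La$ propagates to $n\circ\al - n\circ\al' \in \La$ for every $n\in\N$, which reduces, as noted above, to the $\Z$-module structure of each factor $\La_i$ in the product decomposition.
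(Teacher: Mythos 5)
Your proof is correct and is precisely the intended argument: the paper states Lemma \ref{D} without proof, deferring to Lemma A.7 of \cite{GT}, whose proof is exactly your combination of the $\La$-periodicity of $x\mapsto\Te_\La(1,x)$ (applied at $n\circ\al-n\circ\al'=n\circ(\al-\al')\in\La$) with the term-by-term monotonicity $\Te_\La(1,\cdot)\geq\Te_{\La'}(1,\cdot)$ for $\La'\subseteq\La$. The only cosmetic imprecision is describing the embedding $\R^{d-1}\hookrightarrow\R^d$ as ``padding with $0$''; in the application it is the isometric inclusion of a hyperplane $(\R\xi_i)^\bot$ inside one factor $\R^{d_i}$, but this does not affect your argument.
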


The only substantial difference from \cite{GT} in this section is in the proof of Lemma \ref{S}, where we need estimates for the exponential sums, defined for
$\te=(\te_1,\ldots,\te_k)\in\R^k$ by
\eq S_N(\te) =\frac{1}{N}\sum_{n=1}^N e(n\te_1+\cdots +n^k \te_k).\ee
The following is precisely what is required.
\begin{lem}[Weyl Inequality] Let $0<\de\leq1/2$. There exist a positive constant $C_k>0$, such that if $N\geq\de^{-C_k}$ and
\eq |S_N(\te)|\geq\de\ee
then there exists a positive integer $q\leq \de^{-C_k}$ such that
\eq \|q\te_i\| < \de^{-C_k} N^{-i}\ee
for all $1\leq i\leq k$.
\end{lem}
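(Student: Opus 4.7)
The plan is to induct on $k$ via Weyl's differencing method. The base case $k=1$ is immediate: $S_N(\theta_1)$ is a geometric sum, so $|S_N(\theta_1)| \ge \delta$ forces $\|\theta_1\| \le 1/(\delta N)$, and one may take $q=1$ as soon as $N\ge \delta^{-C_1}$.

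For the inductive step one would apply Weyl differencing to obtain
\[
|S_N(\theta)|^2 \le \frac{1}{N} + \frac{1}{N}\sum_{1\le |h|<N}|T_h(\theta)|,
\]
where $T_h(\theta)$ is (up to a phase) a degree $(k-1)$ Weyl sum associated with $\Delta_h P(n):=P(n+h)-P(n)$; the coefficient of $n^j$ in $\Delta_h P$ is an explicit polynomial $c_j(h)$ in $h$ depending on $\theta_{j+1},\dots,\theta_k$, and in particular $c_{k-1}(h)=kh\theta_k$. If $|S_N(\theta)|\ge \delta$ and $N$ is large enough in terms of $\delta$, a standard pigeonhole produces a set $\mathcal{H}\subset[1,N]$ of size $|\mathcal{H}|\gtrsim \delta^{O(1)}N$ on which $|T_h|\ge \delta^{O(1)}$. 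Applying the inductive hypothesis to each such $T_h$ yields a denominator $q_h\le \delta^{-O(1)}$ with $\|q_h c_j(h)\|<\delta^{-O(1)}/N^j$ for $1\le j\le k-1$; a further pigeonhole on the boundedly many values of $q_h$ extracts a common $q^\ast\le \delta^{-O(1)}$ valid for a still-large subset $\mathcal{H}'\subset\mathcal{H}$.

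The principal obstacle will be \emph{boosting} the ``many shifts'' conclusion $\|q^\ast kh\theta_k\|<\delta^{-O(1)}/N^{k-1}$ (for every $h\in\mathcal{H}'$) up to the desired $\|q\theta_k\|<\delta^{-C_k}/N^k$. A first pigeonhole on differences of elements of $\mathcal{H}'$ gives some $q'\le N/|\mathcal{H}'|\le \delta^{-O(1)}$ with $\|q'q^\ast k\theta_k\|<\delta^{-O(1)}/N^{k-1}$, which is still short by a factor of $N$. That factor is recovered by a descent: writing $\theta_k=a/q+\gamma$ for the approximation just obtained and restricting $n$ to arithmetic progressions modulo $q$, the contribution of $(a/q)n^k$ becomes constant modulo $1$ on each progression, so the sum reduces to a degree-$k$ Weyl sum of length $\sim N/q$ whose leading coefficient is now known to be small. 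Running the whole argument a bounded number of times sharpens the approximation of $\theta_k$ to the required scale $N^{-k}$; this is essentially the content of the classical Weyl inequality for a single polynomial.

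The remaining approximations of $\theta_{k-1},\dots,\theta_1$ are then recovered by back-substitution: plugging the known approximation of $\theta_k$ into the explicit formula for $c_{k-2}(h)$ at any single $h\in\mathcal{H}'$ and using the inductive bound $\|q^\ast c_{k-2}(h)\|<\delta^{-O(1)}/N^{k-2}$ isolates $\theta_{k-1}$ at precision $\delta^{-O(1)}/N^{k-1}$; iterating this triangular solve downward yields simultaneous approximations of $\theta_{k-2},\dots,\theta_1$ at the correct scales, each with denominator of size $\delta^{-O(1)}$. Taking the least common multiple of all $O(k)$ denominators produced along the way gives the single $q\le \delta^{-C_k}$ asserted by the lemma.
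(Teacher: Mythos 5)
You should first note that the paper does not actually prove this lemma: the authors state that it ``follows easily from standard estimates on Weyl sums'' and point to Lemma 5 of \cite{LM1} (and to Vinogradov's method for the quantitative value of $C_k$). So you are supplying a proof from scratch, and your skeleton --- one Weyl differencing, induction on the degree of the differenced polynomial $\Delta_hP$, pigeonholing the denominators $q_h$ to get a common $q^\ast$ --- is the standard and correct one. The two ``boosting'' steps, however, both fail as written, and they fail for the same reason: you are trying to extract a rational approximation from the family $\{h\in\mathcal{H}'\}$ using either a single difference or a single element, when the only way to recover the missing factor of $N$ (resp.\ a \emph{bounded} denominator) is to use the whole set $\mathcal{H}'$ at once. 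Concretely: (i) after your difference pigeonhole you have $\te_k=a/Q+\gamma$ with $Q\le\de^{-O(1)}$ and $|\gamma|\le\de^{-O(1)}Q^{-1}N^{-(k-1)}$; restricting to a progression $n=r+Qm$ the residual leading coefficient is $\gamma Q^k$, whose total phase variation over $m\le N/Q$ is $|\gamma|N^k$, possibly as large as $\de^{-O(1)}N/Q$ --- nowhere near constant --- and rerunning the whole argument on the sum of length $N/Q$ returns an approximation of $\gamma Q^k$ at scale $(N/Q)^{-(k-1)}$, i.e.\ $|\gamma|\lesssim\de^{-O(1)}N^{-(k-1)}/Q$ again. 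The descent is stuck at scale $N^{-(k-1)}$ and never reaches $N^{-k}$. (ii) In the back-substitution, $\|q^\ast c_{k-2}(h)\|<\de^{-O(1)}N^{-(k-2)}$ at a \emph{single} $h$ only places $\te_{k-1}$ near a rational with denominator dividing $(k-1)h\,q^\ast q$, and $h$ may be as large as $N$, so the denominator is not $\le\de^{-C_k}$.

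The missing ingredient in both places is the quantitative recurrence (or ``many shifts'') lemma: if $\|\be h\|\le\eps$ for at least $\rho N$ values of $h\in[1,N]$ and $\eps$ is sufficiently small in terms of $\rho$, then there is a positive integer $q\le C\rho^{-O(1)}$ with $\|q\be\|\le C\rho^{-O(1)}\eps N^{-1}$. The extra factor $N^{-1}$ comes not from a single difference but from the spread of the good $h$'s inside a single residue class modulo the first-stage denominator: two good $h$'s in the same class differ by a multiple $qm$ of $q$ with $m\gtrsim\rho^2N/q$, which pins down the fractional part $\gamma$ to accuracy $\eps/(qm)$. Applying this with $\be=q^\ast k\te_k$, $\eps=\de^{-O(1)}N^{-(k-1)}$ and $\rho=\de^{O(1)}$ gives $\|q\te_k\|\le\de^{-O(1)}N^{-k}$ in one step (the side condition ``$\eps$ small in terms of $\rho$'' is exactly where the hypothesis $N\ge\de^{-C_k}$ is used); applying it again with $\be=q q^\ast(k-1)\te_{k-1}+\text{(known)}$ and $\eps=\de^{-O(1)}N^{-(k-2)}$ handles $\te_{k-1}$ with a bounded denominator, and so on down the triangular system. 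With this lemma in place of your descent and single-$h$ back-substitution, your argument closes and gives the lemma with some $C_k$ of exponential type in $k$; the value $C_k=Ck^2\log k$ quoted in the paper requires replacing the $2^{k}$-type losses of repeated differencing by Vinogradov's mean value estimates, which the paper deliberately does not develop.
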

This formulation follows easily form standard estimates on Weyl sums, see for example \cite{LM1}, Lemma 5. In fact using the sophisticated estimates of Vinogradov, one may take $C_k=C\,k^2\log\,k$, however for simplicity we do not develop such bounds here.

\subsection{The proof of Proposition \ref{lower bound}}\label{proofoflower}
As in \cite {GT}, the proof of Proposition \ref{lower bound} will follow, via an iteration, from the following result.

\begin{propn}[Inductive lower bound on $F_{\La,\al}$]\label{induct}
Suppose $\al=(\al_1,\ldots,\al_k)$, $\La=\La_1\times\cdots\times\La_k$ such that $\al_i\in\R^{d_i}$ and $\La_i\in\R^{d_i}$ is a full rank lattice. Let $N>(4A_\La)^{C_k}$ be an integer. Then either $F_{\La,\al}(N)\geq 1/2$ or there is an $\al'\in\R^{d-1}$ and a full rank lattice $\La'=\La_1'\times\cdots\times\La_k'\subs\R^{d-1}$ with
\eq\label{49} A_{\La'}\leq C(\sqrt{d}+\sqrt{\log A_\La})\,A_\La\ee
and an $N'\geq d^{-C} (4A_\La)^{-C_k}N$ such that
\eq\label{50} F_{\La,\al}(N)\geq d^{-C} (4A_\La)^{-C_k} F_{\La',\al'}(N')\ee
\end{propn}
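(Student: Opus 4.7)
The plan is to implement the standard descent scheme of \cite{GT}, combining the three lemmas of this subsection. First, apply Lemma \ref{S} to the data $(\La, \al, N)$. If alternative (i) holds then $F_{\La,\al}(N) \geq 1/2$ and we are done, so assume alternative (ii) gives a positive integer $q \leq d(4A_\La)^{C_k}$ and, for each $i$, a primitive $\xi_i \in \La_i^* \setminus \{0\}$ with $|\xi_i| \leq C(\sqrt{d} + \sqrt{\log A_\La})$ and $\|q\xi_i \cdot \al_i\|_{\R/\Z} \leq (4A_\La)^{C_k} N^{-i}$.

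The next step will absorb the factor $q$ via the substitution $n \mapsto qn$: restricting the defining sum of $F_{\La,\al}(N)$ to multiples of $q$ gives $F_{\La,\al}(N) \geq \tfrac{1}{q} F_{\La,\tilde\al}(N/q)$, where $\tilde\al_i := q^i \al_i$. Under this rescaling each phase $\xi_i \cdot \tilde\al_i = q^{i-1}(q\xi_i \cdot \al_i)$ lies within $(4A_\La)^{C_k}(N/q)^{-i}$ of an integer, since $q^{i-1}$ times an integer is an integer. Round the phases to be integral by the tiny perturbation $\be_i := \tilde\al_i - (\eta_i/|\xi_i|^2)\xi_i$ with $\eta_i := \xi_i \cdot \tilde\al_i - \mathrm{round}(\xi_i \cdot \tilde\al_i)$; Lemma \ref{P}(iii), applied with a suitable $\eps \leq 1/d$, transfers the resulting lower bound to one for $F_{(1+\eps)\La, (1+\eps)\be}(N/q)$ at a harmless constant cost, while ensuring $\xi_i \cdot \be_i \in \Z$ exactly.

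To descend, pick any $i_0$ with $d_{i_0} \geq 1$. Primitivity of $\xi_{i_0}$ in $\La_{i_0}^*$ yields some $v \in \La_{i_0}$ with $\xi_{i_0} \cdot v = 1$ and a splitting $\La_{i_0} = \Z v \oplus M_{i_0}$, where $M_{i_0} := \{m \in \La_{i_0} : \xi_{i_0} \cdot m = 0\}$ is a rank-$(d_{i_0}-1)$ sublattice of $\xi_{i_0}^\perp \simeq \R^{d_{i_0}-1}$ with covolume $|\xi_{i_0}| \det(\La_{i_0})$. Obtain $\La'$ from $\La$ by replacing $\La_{i_0}$ with $M_{i_0}$; this covolume identity together with Schmidt's bound on $|\xi_{i_0}|$ gives (\ref{49}). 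Define $\al'$ by $\al'_j := \be_j$ for $j \neq i_0$ and $\al'_{i_0} := \be_{i_0} - (\xi_{i_0} \cdot \be_{i_0})\,v \in \xi_{i_0}^\perp$. Since $\xi_{i_0} \cdot \be_{i_0} \in \Z$ and $v \in \La_{i_0}$, we have $\be - \al' \in \La$, so Lemma \ref{D} applies and yields (\ref{50}) with $N' = N/q \geq d^{-C}(4A_\La)^{-C_k}N$ after collecting all multiplicative losses.

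The main obstacle will be the coordinated bookkeeping: Schmidt's alternative is tuned precisely so that the $q^i$-rescaling forced by $n \mapsto qn$ keeps each phase $\xi_i \cdot \tilde\al_i$ close to an integer at the new scale $(N/q)^{-i}$, but one must carefully verify that the perturbation error bound $|\be_i - \tilde\al_i|$, the $1/q$ factor from restricting to multiples of $q$, and the constants from Lemmas \ref{P}(iii) and \ref{D} all fit inside the claimed $d^{-C}(4A_\La)^{-C_k}$ factor appearing in both (\ref{50}) and the lower bound on $N'$. No single estimate is surprising, but verifying the simultaneous compatibility of all these rescalings --- while in the $(1+\eps)$-perturbed lattice supplied by Lemma \ref{P}(iii) --- is the technical crux.
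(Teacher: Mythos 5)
Your proposal follows essentially the same route as the paper: Schmidt's alternative, absorption of $q$ via Lemma \ref{P}(ii), a small perturbation made admissible by Lemma \ref{P}(iii), and descent through Lemma \ref{D} using the covolume identity $\det(\La_{i_0}\cap\xi_{i_0}^\perp)=|\xi_{i_0}|\det(\La_{i_0})$ together with Schmidt's bound on $|\xi_{i_0}|$ to get (\ref{49}).

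The one point you flag but do not resolve is real: with $N'=N/q$ the perturbation $|\be_{i_0}-\tilde\al_{i_0}|\le(4A_\La)^{C_k}(N/q)^{-i_0}$ is \emph{not} of size $\eps (N')^{-i_0}$ with $\eps\le 1/d$, so Lemma \ref{P}(iii) does not apply at that scale. The paper's fix is to first pass from $N$ to $N_*=cd^{-C}(4A_\La)^{-C_k}N$ via Lemma \ref{P}(i)--(ii) and only then divide by $q$, taking $N'=N_*/q$; since $(N/N')^{i_0}\ge d(4A_\La)^{C_k}$, the Schmidt error becomes $\le d^{-1}(N')^{-i_0}$, and the extra loss is exactly the $d^{-C}(4A_\La)^{-C_k}$ factor already allowed in (\ref{50}) and in the lower bound on $N'$. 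With that adjustment your argument is complete. (A cosmetic remark: the descent only needs the single index $i$ produced by Lemma \ref{S}, not a primitive $\xi_i$ for every $i$.)
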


\begin{proof}[Proof of Proposition \ref{induct}] Assuming $F_{\La,\al}(N)< 1/2$ and applying Lemma \ref{S}, there exists an $1\leq i\leq k$, a primitive $\xi_i\in\La_i^*\backslash\{0\}$ and a positive integer $q\leq (4A_\La)^{C_k}$, such that
\[\|\xi_i\cdot q^i\al_i\| \leq (4A_\La)^{C_k} N^{-i}\] (by changing the value of the constant $C_k$). Fixing the lattice $\La_i$, and arguing as in Proposition A.8 of \cite{GT}, it follows that there is a $\be_i\in\R^{d_i}$ such that
\eq\label{51}
|\beta_i-q^i\al_i|\leq (4A_\La)^{C_k} N^{-i}\ee
and an $m_i\in\La_i$ such that $\be'_i=\be_i-m_i\in (\R\xi_i)^\bot\simeq\R^{d_i-1}$. Let $N_*=cd^{-C}(4A_\La)^{-C_k} N$, then by the choice of $N_*$, we have
\eq |\be_i-q^i\al_i|\leq d^{-1} N_*^{-i}\ee
and moreover by Lemma \ref{P} (i) and (ii) it follows that
\[F_{\La,\al}(N) \geq d^{-C} (4A_\La)^{-C_k} F_{\La,\al}(N_*) \geq  d^{-C}(4A_\La)^{-C_k} F_{\La,q\circ\al}(N_*/q)\]
If $\be=(\be_1,\ldots,\be_k)$ with $\be_i$ satisfying (\ref{51}), and $\be_j:=q^j\al_j$ for each $j\neq i$, then by Lemma \ref{P} (iii) with $\eps=1/d$ we have
\eq F_{\La,\al}(N) \geq d^{-C}(4A_\La)^{-C_k}F_{\La,q\circ\al}(N_*/q)\geq d^{-C} (4A_\La)^{-C_k} F_{(1+\eps)\La,(1+\eps)\be}(N')\ee
where $N'=N_*/q$. Note that by the choice of $N$ and the upper bound on $q$, $N'$ satisfies the claimed lower bound.
Finally let $\al'=(1+\eps)\be'$, where $\be_j'=\be_j$ for $j\neq i$, and let $\La'=\La_1'\times\cdots\times\La_k'$ such that $\La_i'=(1+\eps)\La_i\cap (\R\xi_i)^\bot$ and $\La_j':=(1+\eps)\La_j$ for $j\neq i$.  From Lemma \ref{D} using the facts that $\det(\La)=\prod_i \det(\La_i)$ and $(1+1/d)^d\leq e$, one obtains
\[F_{\La,\al}(N) \geq d^{-C} (4A_\La)^{C_k}\frac{\det (\La_i)}{\det (\La_i\cap (\R\xi_i)^\bot)}F_{\La',\al'}(N').\]
The rest of the argument goes exactly as in \cite{GT}, one estimates
\[\frac{\det (\La_i)}{\det (\La_i\cap (\R\xi_i)^\bot)} = \frac{A_{\La_i}}{A_{\La_i\cap (\R\xi_i)^\bot)}}\leq |\xi_i|^{-1}\leq C(\sqrt{d}+\sqrt{\log A_{\La_i}}).\]
Since $A_\La=\prod_i A_{\La_i}$ and in particular $A_{\La_i}\leq A_\La$, the claimed bounds (\ref{49}) and (\ref{50}) follow.
\end{proof}

Note that if $d_i=1$ the $(\R\xi_i)^\bot=\{0\}$, thus $\La_i'=\{0\}$, $d_i'=0$ and $\Te_{\La_i'}(t,x)\equiv 1$. We allow this to avoid the need to discuss separate cases. Iterating this proposition leads to the claimed lower bound on $F_{\La,\al}(N)$ in Proposition \ref{lower bound}, in a straightforward manner, as in the proof of Proposition A.9 in \cite{GT}.


\begin{proof}[Proof of Proposition \ref{lower bound}]
By the trivial lower bound
\[F_{\La,\al}(N)\geq \det (\La)/N\geq 1/N\] one may assume $N>d^{Cd^2}(4A_\La)^{C_k d}$ for some suitably large constants $C$ and $C_k$. Set $\La_0=\La$, $\al_0=\al$, $N_0=N$. Applying Proposition \ref{induct} repeatedly one obtains vectors $\al^{(j)}\in\R^{d-j}$, lattices $\La^{(j)}\subs\R^{d-j}$ and integers $N^{(j)}$. Thus there must exist a $j\leq d$ such that $F_j:=F_{\La^{(j)},\al^{(j)}}(N^{(j)})\geq 1/2$ (if $j=d$, then $\La^{(j)}=\{0\}$ hence $F_j=1$).

To check the validity of the iteration, since $A_\La\geq 1$, one may use the crude bound
\[\sqrt{d}+\sqrt{\log X}\leq CdA_\La^{1/d}\]
 for $X\geq1$, thus by (\ref{49}) one has
\[A_{\La^{(j)}} \leq (Cd)^d A_\La^C\]
for all $1\leq j\leq d$. This implies
\[N^{(j)}\geq d^{-C} (4A_{\La^{(j)}})^{-C_k} N^{(j-1)} \geq (Cd)^{-C_k d}A_\La^{-C_k}\]
thus by the choice of $N$, we have that $N^{(j)}\geq (4A_{\La^{(j)}})^{C_k}$ for all $1\leq j\leq d$, and hence by (\ref{50})
\[F_{j+1}\geq d^{-C} (4A_{\La^{(j)}})^{-C_k}F_j \geq  (Cd)^{-C_k d}A_\La^{-C_k}F_j\]
which gives the desired lower bound for $F_{\La,\al}(N)$.
\end{proof}

\end{appendix}



\begin{thebibliography}{10}

\bibitem{ACK}
{\sc G. I. Arkhipov, V. N.  Chubarikov and A. A. Karatsuba}, {\em Trigonometric sums in number theory and analysis}, de Gruyter Expositions in Mathematics, 39. Walter de Gruyter GmbH \& Co. KG, Berlin

\bibitem{BPPS}
{\sc A. Balog, J. Pelik\'an, J. Pintz and E. Szemer\'edi}, {\em Difference sets without $\kappa$-th powers},
Acta Math. Hungar. 65 (1994), 165-187.

\bibitem{B2}
{\sc V. Bergelson}, {\em
The multifarious PoincarŽ recurrence theorem}, Descriptive set theory and dynamical systems (Marseille-Luminy, 1996), 31--57,
London Math. Soc. Lecture Note Ser., 277, Cambridge Univ. Press, Cambridge, 2000.

\bibitem{B}
{\sc V. Bergelson}, {\em Combinatorial and Diophantine applications of ergodic theory}, Handbook of dynamical systems. Vol. 1B, 745--869, Elsevier B. V., Amsterdam, 2006.

\bibitem{BHK}
{\sc V. Bergelson, B. Host and B. Kra}, {\em Multiple recurrence and nilsequences}, with an appendix by Imre Ruzsa,
Invent. Math. 160 (2005), no. 2, 261--303.

\bibitem{BL}
{\sc V.~Bergelson and A.~Leibman}, {\em Polynomial extensions of van der {W}aerden's and {S}zemer\'edi's theorems}, J. Amer. Math. Soc., 9, No. 2 (1996), 725-753.


\bibitem{NF}
{\sc N. Frantzikinakis}, {\em Multiple ergodic averages for three polynomials and applications},  Trans. Amer. Math. Soc.  360  (2008),  no. 10, 5435--5475.

\bibitem{FK}
{\sc N. Frantzikinakis and B. Kra}, {\em Ergodic averages for independent polynomials and applications},  J. London Math. Soc. (2)  74  (2006),  no. 1, 131--142.

\bibitem{FM}
{\sc N. Frantzikinakis and R. McCutcheon}, {\em Ergodic Theory: Recurrence},
Encyclopedia of Complexity and System Science, Springer, (2009), Part 5, 3083-3095

\bibitem{F}
{\sc H.~Furstenberg}, {\em Ergodic behavior of diagonal measures and a theorem of {S}zemer\'edi on arithmetic progressions},
  J. d'Analyse Math, 71 (1977), 204-256.

\bibitem{GT}
{\sc B. Green and T. Tao}, {\em New bounds for SzemerŽdi's theorem. II. A new bound for $r_4(N)$},  Analytic number theory,  180--204, Cambridge Univ. Press, Cambridge, 2009.

\bibitem{gowers1}
{\sc W.~T.~Gowers}, {\em A new proof of {S}zemer\'edi's theorem}, GAFA, 11 (2001), 465-588.

\bibitem{G}
{\sc W. T. Gowers}, {\em Decompositions, approximate structure, transference, and the Hahn-Banach theorem}, \texttt{arxiv.org/abs/0811.3103}.

\bibitem{John}
{\sc J. Griesmer}, {Personal communication}

\bibitem{H}
{\sc L. K, Hua}, {\em
Additive theory of prime numbers},  American Mathematical Society, Providence, R.I. 1965

\bibitem{K}
{\sc A. Y. Khintchine}, {\em Eine Versch\"arfung des Poincar\'escen ``Wiederkehrsatzes''}, Composio Math. 1 (1934), 177-179.

\bibitem{KvN}
{\sc B. O. Koopman and J. von Neumann}, {\em Dynamical systems of continuous spectra}, Proc. Nat. Acad. Sci. U.S.A. 18 (1932), 255-263

\bibitem{Kra}
{\sc B. Kra}, {\em Ergodic methods in additive combinatorics},  Additive combinatorics,  103--144,
CRM Proc. Lecture Notes, 43, Amer. Math. Soc., Providence, RI, 2007.

\bibitem{Lucier}
{\sc J. Lucier}, {\em Intersective sets given by a polynomial},
   Acta Arith.  123  (2006),  no. 1, 57-95.

\bibitem{LM1}
{\sc N. Lyall and \'A. Magyar}, {\em Polynomial configurations in difference sets},
   J. Num. Theory, v. 129/2, pp. 439-450, 2009.

\bibitem{LM1'}
{\sc N. Lyall and \'A. Magyar}, {\em Polynomial configurations in difference sets (Revised version)},  \texttt{arxiv.org/abs/0903.4504}.

\bibitem{LM2}
{\sc N. Lyall and \'A. Magyar}, {\em Optimal polynomial return times}, preprint

\bibitem{M}
{\sc R. McCutcheon},
{\em Elemental methods in ergodic Ramsey theory},
Lecture Notes in Mathematics, 1722. Springer-Verlag, Berlin, 1999.

\bibitem{P}
{\sc H. Poincar\'e}, {\em Les m\'ethodes nouvelles de la m\'ecanique c\'eleste. I}, Gathiers-Villars, Paris 1892; II, 1893; III, 1899.

\bibitem{T}
{\sc T. Tao}, {\em The ergodic and combinatorial approaches to Szemer\'edi's Theorem},  Additive combinatorics,  145--193,
CRM Proc. Lecture Notes, 43, Amer. Math. Soc., Providence, RI, 2007.

\bibitem{W}
{\sc T. Wooley}, {\em Some remarks on Vinogradov's mean value theorem and Tarry's problem}, Monatsh. Math. 122 (1996), no. 3, 265--273.

\end{thebibliography}
\end{document}